\numberwithin{equation}{section}
\newtheorem{lem}{Lemma}[section]
\newtheorem{defi}[lem]{Definition}
\newtheorem{thm}[lem]{Theorem}
\newtheorem{cor}[lem]{Corollary}
\theoremstyle{remark}
\newcommand{\nn}{\nonumber}
\newcommand{\R}{\mathbb{R}}
\renewcommand{\H}{\mathcal H}
\newcommand{\1}{\bf 1}
\renewcommand{\d}{\partial}
\newcommand{\dist}{\,\mathrm{dist}}
\newcommand{\sm}{\setminus}
\newcommand{\wt}{\widetilde}
\newcommand{\ol}{\overline}
\newcommand{\ntlim}{\, \mathrm{n.t.lim}}
\newcommand{\cD}{{\mathcal D}}
\newcommand{\cF}{{\mathcal F}}
\begin{document}

\title[Square Functions in Higher Co-Dim]{Square functions, non-tangential limits and harmonic measure in co-dimensions larger than one}
\author{Guy David}
\author{Max Engelstein}
\author{Svitlana Mayboroda}
  \thanks{
 G.\, David was partially supported by the Simons Collaborations in MPS grant 601941, GD,
 the ANR, programme blanc GEOMETRYA ANR-12-BS01-0014, and the European Community 
 Marie Curie grant MANET 607643 and H2020 grant GHAIA 777822.
M.\ Engelstein was partially supported by an NSF postdoctoral fellowship, NSF DMS 1703306 and by David Jerison's grant DMS 1500771. 
S.\ Mayboroda was partially supported by 
an NSF INSPIRE grant DMS-1344235, NSF DMS 1839077, a Simons Foundation Fellowship, and the Simons Collaborations in MPS grant 563916, SM.
This project is based upon work supported by the National Science Foundation under Grant No. DMS-1440140 while the authors were in residence at the Mathematical Sciences Research Institute in Berkeley, California, during the Spring 2017 semester. The authors also want to thank Camille Labourie for pointing out the reference \cite{Bor}. Finally, the authors want to thank several anonymous referees, whose careful readings and comments greatly improved this paper.}

\subjclass[2010]{Primary: 28A75, 35J70. Secondary: 42B20, 42B37, 35R35}
\keywords{Free boundary problems, uniform rectifiability, degenerate elliptic equations, 
 harmonic measure in co-dimension larger than $1$} 
\address{Univ Paris-Sud, Laboratoire de Math\'{e}matiques, 
UMR 8658 Orsay, F-91405}
\email{Guy.David@math.u-psud.fr}
\address{School of Mathematics, University of Minnesota,University of Minnesota, Minneapolis, MN, 55455, USA.}
\email{mengelst@umn.edu}
\address{School of Mathematics, University of Minnesota,University of Minnesota, Minneapolis, MN, 55455, USA.}
\email{svitlana@math.umn.edu}
\date{}
\maketitle

\begin{abstract}
In this paper, we characterize the rectifiability (both uniform and not) of an Ahlfors regular set, $E$, of arbitrary co-dimension by the behavior of a regularized distance function in the complement of that set. In particular, we establish a certain version of the Riesz transform characterization of rectifiability for lower-dimensional sets. We also uncover a special situation in which the regularized distance is itself a solution to a degenerate elliptic operator in the complement of $E$. This allows us to precisely compute the harmonic measure of those sets associated to this degenerate operator and prove that, in a sharp contrast with the usual setting of co-dimension one, a converse to the Dahlberg's theorem (see \cite{Da} and \cite{DFM2}) must be false on lower dimensional boundaries without additional assumptions. 
\end{abstract}

\tableofcontents

\section{Introduction}\label{s:Introduction}

The beginning of the XXI century has brought a series of long sought-after results enlightening connections between the scale-invariant geometric, analytic, and PDE properties of sets. Among the most celebrated ones were the Riesz transform characterizations of uniform rectifiability \cite{DS2, To1, NToV} and full description of the sets for which the harmonic measure is absolutely continuous with respect to the Lebesgue measure, in terms of uniform rectifiability along with a certain topological condition \cite{AHMMT}. Both of these results rest on, and have been surrounded by, a plethora of important related advancements. We do not intend to review the related literature in the present work but point out that virtually the entire theory has been restricted to the $n-1$ dimensional boundaries of domains in $\R^n$. The question of a possible extension of these results to lower-dimensional sets has become one the central open problems in the subject ever since.

In \cite{DFM3}, the first and third author (together with Joseph Feneuil) introduced a regularized distance function $D_{\mu,\alpha}$ (see \eqref{1.1} and \eqref{1.3} below) as a tool in a long program to characterize uniformly rectifiable sets of co-dimension greater than one by the behavior of certain (degenerate) elliptic operators in the complement of that set. In this paper we provide necessary and sufficient conditions for both the rectifiability and uniform rectifiability (see Definition \ref{def:UR}) of a $d$-Ahlfors regular set, $E$, in terms of the oscillation of $|\nabla D_{\mu,\alpha}|$ in the complement of $E$. These results are new even in the classical context of co-dimension one. However, most notably, for lower dimensional sets, and for special values of involved parameters, they provide an unexpected version of the Riesz transform characterization - we will discuss the details after appropriate definitions. 

We also discover a surprising situation in which the distance function itself is a solution to the degenerate elliptic operator introduced by \cite{DFM3}. This allows us to compute the Green function {\it explicitly} and compare the associated harmonic measure (see \eqref{e:Lharmonicmeasure} below) to the Hausdorff measure {\em no matter how irregular $E$ is} -- a situation unheard of in co-dimension one. In particular, as we mentioned above, recently, as a culmination of a long line of research starting with the work of F. and M. Riesz \cite{RR}, the results of Azzam, Hofmann, Martell, Mourgoglou and Tolsa \cite{AHMMT} showed that the harmonic measure supported on a co-dimension one set is nice if and only if the set itself is nice. More precisely, assuming a ``quantitative openness" condition on $\Omega \subset \mathbb R^n$ and condition \eqref{e:AR} on $\partial \Omega$ (for $d= n-1$), the harmonic measure of $\Omega$ supported on $\partial \Omega$ is regular if and only if $\partial \Omega$ is uniformly rectifiable and a weak connectivity condition holds inside of $\Omega$. Our result shows that the analogous characterization fails brutally in the co-dimension greater than one situation described above (see below for further discussion). 

In order to more precisely state our results (and the analogous work in co-dimension one), let us introduce some notation and notions.  We are given an Ahlfors regular set $E \subset \R^n$, with $n\geq 2$, of 
any dimension $d < n$, and an Ahlfors regular measure $\mu$ on $E$. Recall, a measure $\mu$ is $d$-Ahlfors regular if \begin{equation}\label{e:AR}
C^{-1}r^d \leq \mu(B(Q,r)) \leq Cr^d\;\; \forall Q\in E,\; \forall 0 < r < \mathrm{diam}(E).
\end{equation}

A set, $E$, is $d$-Ahlfors regular if $\mathcal H^d|_E$ is a $d$-Ahlfors regular measure, or equivalently if it supports some $d$-Ahlfors regular measure. 

The most salient class of regularity for us is uniform rectifiability. Recall that a set $E\subset \R^n$ is $d$-rectifiable (with $d \in \mathbb N$ necessarily) if 

\begin{equation}\label{e:rectifiability}
\begin{aligned}
&\text{ there 
exist countably many Lipschitz functions}\\ &f_i:\mathbb R^d \rightarrow \R^n\:\:\:\: \text{such that}\:\: \mathcal H^d\left(E\backslash 
\bigcup_i f_i(\R^d)\right) 
= 0.
\end{aligned}
\end{equation}

Uniform rectifiability is a quantitative version of this (cf. \cite{DS1}):
 
 \begin{defi}\label{def:UR}
 Let  $d$ be an integer and 
$E \subset \mathbb R^n$ be a $d$-Ahlfors regular set. We say that $E$ is $d$-uniformly rectifiable 
(with constants $ \theta > 0, L > 0$), if it has big pieces of Lipschitz images. 
That is, if for any $x\in E, \mathrm{diam}(E) >  r > 0$ there exists an $f: \R^d \rightarrow \R^n$ which is $L$-Lipschitz 
and such that 
$$
\mathcal H^{d}(E \cap B(x,r) \cap f(\R^d \cap B(0,r))) 
\geq \theta  r^d. 
$$  
 \end{defi}

The definition above is just one of several (equivalent) definitions of uniform rectifiability, which we chose because we like its geometric flavor. Through the paper we will introduce as needed (and use) other characterizations of uniform rectifiability (e.g. using $\beta$ or $\alpha$ numbers). One of the goals of this paper is to provide another characterization of uniform rectifiability using the regularized distance to $E$. Note, the sets considered in this paper satisfying \eqref{e:AR} or Definition \ref{def:UR} will always be unbounded. This choice simplifies many of our theorems and proofs and comports with the prior work in \cite{DFM2}, \cite{DFM3}.

\subsection{Regularized Distances and (Uniform) Rectifiability} 
As above let $E$ be a $d$-Ahlfors regular set and $\mu$ a $d$-Ahlfors regular measure whose support is $E$. In some cases, $\mu$ will be the restriction to $E$ of the 
$d$-dimensional Hausdorff measure $\H^d$, but not always. 

For each $\alpha > 0$ 
(often fixed in the argument), we define first a function, 
$R \equiv  R_{\mu, \alpha}$, on $\Omega = \R^n \sm E$ by 
\begin{equation}\label{1.1}
R(x) =  R_{\mu,\alpha}(x) = \int_{y\in E} |x-y|^{-d-\alpha} d\mu(y),
\end{equation}
where the convergence comes from the Ahlfors regularity of $\mu$. In fact, a simple estimate with dyadic 
annuli shows that 
\begin{equation}\label{1.2}
C^{-1} \delta(x)^{-\alpha} \leq R(x) \leq C \delta(x)^{-\alpha} \ \text{ for } x\in \R^n\backslash E,
\end{equation}
where we set $\delta(x) = \dist(x,E)$ and $C$ depends on $\mu$ and $\alpha$. 
After this, we define $D =  D_{\mu, \alpha}$ (suppressing the dependence on $\mu, \alpha$ when it is clear from context or unimportant) by
\begin{equation}\label{1.3}
D(x) \equiv D_{\mu,\alpha}(x) = R_{\mu, \alpha}^{-1/\alpha}(x) \ \text{ for } x\in \R^n\backslash E.
\end{equation}

These distances were first introduced by \cite{DFM3} to study degenerate elliptic PDE, but in the first part of our paper we are more concerned with the analytic properties of $\nabla D$, which we think of as analogous to the Riesz transform  (though it is regularized by the presence of $\alpha$ in the kernel). Indeed, 
\begin{multline}\label{nabD}\nabla D_{\mu, \alpha}(x)=-\frac{1}\alpha \left( \int_{y\in E} |x-y|^{-d-\alpha} d\mu(y)\right)^{-\frac 1\alpha-1}  \int_{y\in E} \nabla_x(|x-y|^{-d-\alpha}) \, d\mu(y)\\
=\frac{d+\alpha}\alpha \left( \int_{y\in E} |x-y|^{-d-\alpha} d\mu(y)\right)^{-\frac 1\alpha-1}  \int_{y\in E} \frac{x-y}{|x-y|^{d+\alpha+2}} \, d\mu(y), 
\end{multline}
for every $x\in \Omega$. Setting formally $\alpha=-1$ above and properly re-interpreting the integrals would transform the latter term into the classical Riesz transform. However, our $\alpha$ is always a positive number, so that the resultant expression, while analogous, is actually a quite surprising extension of the concept of the Riesz transform. One of the main discoveries of this paper is that $\nabla D_{\mu, \alpha}$ carries rich geometric information, similar to the original Riesz tranform, for sets of {\it arbitrary} dimension (not necessarily $n-1$).

To measure the oscillation of $\nabla D_{\mu, \alpha}$ in a scale invariant way we define 
\begin{equation}\label{1.4}
F(x) \equiv F_{\mu, \alpha}(x) = \delta(x) \big|\nabla (|\nabla D|^2)(x)\big| = 
\delta(x) \Big( \sum_{k=1}^n \Big|\frac{\d }{\d x_k}(|\nabla D|^2)(x)  \Big|^2 \Big)^{1/2}
\end{equation}
for $x\in \Omega \equiv \R^n \backslash E$. 
This is a dimensionless quantity, or rather, in crude terms,  it is easy to see that $D$ is Lipschitz and 
$|\nabla (|\nabla D|^2)(x)|$ is bounded by $\delta(x)^{-1}$.
We say that $D$ satisfies the ``usual square function estimates" (USFE for short) when 
 \begin{equation}\label{1.5}
F(x)^2 \delta(x)^{-n+d}dx \ \text{ is a Carleson measure on $\Omega$.}
\end{equation}

Let us recall the definition of a Carleson measure (which is intimately linked to uniform rectifiability and will be used several times),
first on $E\times \R_+$ (the standard case) and then 
on $\Omega$ (as needed above).  

\begin{defi}\label{d:CarlesonMeasure}
Let $E\subset \mathbb R^n$ be $d$-Ahlfors regular. We say that $\nu(x,r)$ is a Carleson measure on $E\times \R_+$ if there exists a $C > 0$ such that for every $X \in E$ and $R > 0$ we have that \begin{equation}\label{e:carlesonmeasure1} \nu(B(X,R)\cap E \times [0, R]) \leq CR^d.\end{equation}

Similarly, $\mathcal G$ is a Carleson subset of $E \times (0, +\infty)$ if the measure ${\1}_{{\mathcal G}}(x,r) \frac{d\H^d(x) dr}{r}$ is a Carleson measure on $E\times \R_+$. 
\end{defi}

\begin{defi}\label{d:CarlesonMeasurecompliment} 
Let $E\subset \mathbb R^n$ be $d$-Ahlfors regular. We say that $\lambda(x)$ is a Carleson measure on $\Omega \equiv \mathbb R^n \backslash E$ if there exists a $C \geq 0$ such that $$\lambda(\Omega \cap B(X,R)) \leq C R^d$$ for all $X \in E$ and $R > 0$. 

Similarly, $Z \subset \Omega$ is a Carleson set if ${\1}_{{Z}}(x) \delta(x)^{-n+d}dx$ 
is a Carleson measure on $\Omega$.
\end{defi}

Notice that for the moment we use the definition of $F$ that is the simplest  for us to use; 
taking $F(x) = \big|\nabla (|\nabla D|)(x)\big|$ instead will give the same results; see e.g. Corollary \ref{c:URimpliestildeUSFE}.

The results of Sections \ref{s:URimpliesUSFE} and \ref{s:wUSFEimpliesUR} characterize uniform rectifiability through the USFE; in particular we show 
\begin{thm}\label{tintro1} Let $n \geq 2$ and $0 < d < n$ ({\it a priori} $d$ is not necessarily an integer).  A $d$-Ahlfors regular set in $\R^n$ equipped with a $d$-Ahlfors regular measure $\mu$ is uniformly rectifiable if and only if $F_{\mu, \alpha}$ satisfies the USFE for some $\alpha > 0$.
\end{thm}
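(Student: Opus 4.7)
The natural bridge between USFE and uniform rectifiability is flatness of $E$ at most scales and locations. The model case is $\mu = \mathcal{H}^d|_P$ for a $d$-plane $P$: by scale invariance and symmetry,
\[
R_{\mu,\alpha}(x)=c(d,\alpha)\,\delta(x)^{-\alpha}, \quad\text{so}\quad D_{\mu,\alpha}(x)=c(d,\alpha)^{-1/\alpha}\,\delta(x)
\]
is affine in each half-space determined by $P$. In this model $\nabla D$ is locally constant and $F\equiv 0$. Both directions aim to quantify how nearly this flat picture holds at generic Whitney scales.

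\textbf{Direction UR $\Rightarrow$ USFE.} I would leverage Tolsa's $\alpha$-number characterization: $E$ is UR if and only if $\alpha_\mu(x,r)^2\,d\mu(x)\,\tfrac{dr}{r}$ is a Carleson measure on $E\times\R_+$, where $\alpha_\mu(x,r)$ is the flat (Wasserstein-type) distance between $\mu|_{B(x,r)}$ and its best $d$-flat approximation $\nu$. For $x\in\Omega$, let $\hat x\in E$ with $|x-\hat x|=\delta(x)$ and $r\sim\delta(x)$. In $B(\hat x,C r)$ I would split $\mu=\nu+(\mu-\nu)$ and control far-away contributions with dyadic annuli (where $\alpha$-numbers at larger scales enter). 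The $\nu$-piece contributes nothing to $F(x)$ (because, in the sense above, $D$ is affine for a flat measure), and the $(\mu-\nu)$-piece contributes a tail-type expression bounded by a Schur-integral of $\alpha_\mu$ over scales $\gtrsim \delta(x)$. Transferring with the Whitney correspondence $\Omega\leftrightarrow E\times\R_+$ converts the Carleson bound on $\alpha$-numbers into the Carleson bound on $F(x)^2\,\delta(x)^{-n+d}\,dx$.

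\textbf{Direction USFE $\Rightarrow$ UR.} This is the harder implication. USFE forces, on most Whitney cubes $Q\subset\Omega$, the function $|\nabla D|^2$ to be nearly constant: integrating $|\nabla(|\nabla D|^2)|$ along paths of length $\sim\delta(Q)$, together with the pointwise bound $|\nabla(|\nabla D|^2)|\lesssim\delta^{-1}$ and a John--Nirenberg/Poincar\'e argument, gives $|\nabla D|^2\approx c(Q)$ on $Q$ up to an error controlled in Carleson norm. Since $D$ is $\sim 1$-Lipschitz and vanishes on $E$, this approximate constancy pushes $D$ to be close on $Q$ to an affine function $L_Q$; the zero set $P_Q=\{L_Q=0\}$ must then be a hyperplane very close to $E$ at the boundary scale of $Q$. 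Accumulating this over the Carleson set of good scales yields bilateral weak approximation of $E$ by planes on a Carleson set, from which UR follows by the bilateral weak geometric lemma / BAUP characterization of David--Semmes.

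\textbf{Main obstacle.} The critical difficulty lies in the second direction: passing from \emph{analytic} flatness of $|\nabla D|$ to \emph{geometric} flatness of $E$. In particular, one must control the \emph{direction} of $\nabla D$ across a Whitney region (not only its magnitude) so that the affine approximation $L_Q$ genuinely models the distance to a single hyperplane, and one must show that these directions are consistent across scales to locate a common approximating plane for $E$. A clean approach is a compactness/blow-up argument: along a sequence failing UR, extract a limit pair $(E_\infty,\mu_\infty)$; USFE should survive the limit and force $|\nabla D_{\mu_\infty,\alpha}|$ to be constant on $\R^n\sm E_\infty$, so $D_{\mu_\infty,\alpha}$ is affine on each component, which then forces $E_\infty$ to be a $d$-plane -- a contradiction. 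The main technical work lies in verifying stability of $R_{\mu,\alpha}$ and $D_{\mu,\alpha}$ under weak convergence of $\mu$ (handling the singular kernel $|x-y|^{-d-\alpha}$ and tail estimates) and in rigorously extracting the plane from the limit.
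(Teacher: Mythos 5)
Your proposal matches the paper's strategy essentially exactly: the direction UR $\Rightarrow$ USFE is carried out via Tolsa's $\alpha$-number Carleson characterization together with a dyadic tail estimate transferring to Whitney regions (Theorem \ref{t:URimpliesUSFE}), and the direction USFE $\Rightarrow$ UR is proved precisely by the compactness/blow-up argument you sketch under ``Main obstacle'': extract a limit $(E_\infty,\mu_\infty)$ on which $|\nabla D_{\mu_\infty,\alpha}|$ is constant, show this forces $E_\infty$ to be a $d$-plane (Theorem \ref{t:constantimpliesconvex} and Corollary \ref{t2.2}, via convexity of Chebyshev sets), and conclude UR through the bilateral weak geometric lemma (Lemmas \ref{t4.2}--\ref{t4.3} and Theorem \ref{t:weakUSFEimpliesUR}). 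The John--Nirenberg/Poincar\'e route you mention first is not used in the paper; the compactness route you then endorse is the one actually taken, and you correctly identify the two key technical lemmas (stability of $R_{\mu,\alpha}$, $D_{\mu,\alpha}$ under weak convergence of $\mu$, and extracting the plane from the limit) as the crux of the argument.
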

In fact a slightly weaker condition than the USFE, which we call ``weak USFE", implies that $d$ is integer and $E$ is uniformly rectifiable. We note that this characterization is new even in co-dimension one and in some instances may be easier to check than previous conditions involving square functions 
(due to David and Semmes \cite{DS1}) and singular integrals 
\cite{DS2,NToV, To1}. Indeed, while it is often hard to check 
the $L^2$ boundedness of singular operators,   given a set $E$,  $F_{\mu, \alpha}$ can be computed fairly explicitly.  We also remark the parallel with the ``classical" USFE, involving a Carleson measure condition similar to the above for the second derivative of the Newtonian kernel, that is, the gradient of the kernel of the classical Riesz transform \cite{DS2}. Just as  $\nabla D$ in \eqref{nabD} resembles the Riesz transform only formally, our expressions here with $\alpha>0$ are, of course, different, both intuitively (we really think of them as derivatives of a regularized distance) and factually 
 (these are not classical singular integrals).
Most importantly, the results here apply to the lower-dimensional setting while the (obvious extension of) the classical USFE is known to fail for sets of dimension lower than $n-1$ \cite{DS2}, p.~267, and no lower-dimensional analogue of this characterization has been known thus far. 

We would like to highlight a crucial component of the proof that the USFE implies uniformly rectifiability: 
Corollary \ref{t2.2}. There we show that if $|\nabla D_{\mu, \alpha}|$ is constant, then $E$ must be a $d$-affine space and $\mu$ a constant multiple of $\mathcal H^d|_E$. 
This corollary follows from Theorem~\ref{t:constantimpliesconvex}, 
which is possibly of wider interest, and states that if the distance to a set, $E$, is a $C^{1}$ function, the set $E$ must be convex. As we were writing, we learned that this line of inquiry is related to results in convex analysis (see Section \ref{s:flatwhenDconstant} for details).

We are also interested in the existence of non-tangential limits for $|\nabla D|$. That is, the limit of $|\nabla D(x)|$ as $x$ approaches a point $Q\in E$ without getting ``too close" to $E$ (see \eqref{e:nontangentialaccessE} for the definition of a non-tangential region and Definition \ref{d:ntlimit} for the definition of non-tangential convergence). Non-tangential limits are an important concept in harmonic analysis (e.g. the classical Fatou's theorem). In Section \ref{s:NTLimits}, we prove
\begin{thm}\label{tintro2} Let $n \geq 2$ and $0 < d < n$ (with $d$ not necessarily an integer). A $d$-Ahlfors regular set in $\R^n$ equipped with a $d$-Ahlfors regular measure $\mu$ is rectifiable if and only if the non-tangential limit of $|\nabla D_{\mu, \alpha}|$ exists at $\mu$-almost every point in $E$ (for cones of every aperture).
\end{thm}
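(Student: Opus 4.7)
The plan is to prove both implications via blow-up analysis, resting on two ingredients. First, the scaling identity
\begin{equation}\label{planscale}
|\nabla D_{\mu,\alpha}(Q+rx)| = |\nabla D_{\mu_{Q,r},\alpha}(x)|, \qquad \mu_{Q,r}(A) := r^{-d}\mu(Q+rA),
\end{equation}
which is immediate from \eqref{nabD} by the substitution $y = Q + rz$ in the defining integrals. Second, Corollary \ref{t2.2}, which identifies the $d$-AR measures whose $|\nabla D_{\cdot,\alpha}|$ is constant on the complement of their support as the constant multiples of $\H^d$ on a $d$-affine plane.

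For the forward direction, at $\mu$-a.e.\ $Q\in E$, rectifiability together with $d$-Ahlfors regularity forces $\mu_{Q,r} \rightharpoonup c_Q \H^d|_{P_Q}$ as $r\to 0^+$, for some $d$-plane $P_Q$ through $0$ and some density $c_Q>0$ (a standard consequence of the structure theorem for rectifiable AR measures). A direct change of variables in \eqref{1.1} gives $D_{c\H^d|_P,\alpha}(x) = \kappa(c,d,\alpha)\,\dist(x,P)$ for any $d$-plane $P$, so $|\nabla D_{c\H^d|_P,\alpha}| \equiv \kappa(c,d,\alpha)$. Combining \eqref{planscale} with a continuity statement for $\mu \mapsto |\nabla D_{\mu,\alpha}(x)|$ along weakly convergent sequences of uniformly $d$-AR measures (at points $x$ outside all supports) then yields $|\nabla D_{\mu,\alpha}(x_k)| \to \kappa(c_Q,d,\alpha)$ for every non-tangential sequence $x_k \to Q$, with the same limit in every cone.

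For the converse, let $Q$ be a point where the NT limit $L(Q)$ exists for cones of every aperture. By Prokhorov and the $d$-AR of $\mu$, every sequence $r_k \downarrow 0$ has a subsequence along which $\mu_{Q,r_k} \rightharpoonup \nu$ with $\nu$ a $d$-AR measure. For any $x \notin \supp(\nu)$, uniform $d$-AR plus weak convergence yield Hausdorff convergence of supports on bounded sets, so $Q + r_k x$ eventually lies in a non-tangential cone at $Q$ of aperture depending on $\dist(x,\supp(\nu))$; the NT hypothesis then forces $|\nabla D_{\mu,\alpha}(Q + r_k x)| \to L(Q)$, and \eqref{planscale} together with the continuity statement give $|\nabla D_{\nu,\alpha}(x)| = L(Q)$. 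Hence $|\nabla D_{\nu,\alpha}|$ is constant on $\R^n \setminus \supp(\nu)$, and Corollary \ref{t2.2} identifies $\nu$ as a multiple of $\H^d$ on a $d$-affine plane (which in particular forces $d$ to be an integer). With flat tangent measures at $\mu$-a.e.\ point, $d$-rectifiability of $E$ follows from the classical Besicovitch--Mattila--Preiss characterization.

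The main obstacle is the continuity statement: if $\mu_n \rightharpoonup \nu$ with the $\mu_n$ uniformly $d$-AR, then $|\nabla D_{\mu_n,\alpha}(x)| \to |\nabla D_{\nu,\alpha}(x)|$ for $x$ outside every support. Weak convergence alone does not control the tail integrals $\int_{|y| > R}|x-y|^{-d-\alpha}\,d\mu_n(y)$ on which $\nabla D$ depends nonlinearly. The remedy is a dyadic annular decomposition of the kernels in \eqref{1.1} and \eqref{nabD}: $d$-AR bounds the $j$-th annular contribution by $C 2^{-j\alpha}$ once $R$ dominates $|x|$, so tails are uniformly small, while on the bounded part one pairs against smooth compactly supported truncations of the kernels and invokes vague convergence. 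Composing with the smooth map $R \mapsto R^{-1/\alpha-1}$ then gives pointwise convergence of $|\nabla D|$; the remaining steps are classical tangent-measure analysis.
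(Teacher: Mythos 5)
Your proposal follows essentially the same path as the paper's proof: you blow up at $\mu$-a.e.\ $Q$, use the scaling identity $|\nabla D_{\mu,\alpha}(Q+r x)| = |\nabla D_{\mu_{Q,r},\alpha}(x)|$, pass the oscillation of $|\nabla D|$ through a weak limit to the tangent measure, invoke Corollary~\ref{t2.2} as the rigidity endpoint, and close with the Preiss/Mattila tangent-measure criterion. Your ``continuity statement'' is exactly the role played by Lemma~\ref{l:convunderblowups}, and Corollary~\ref{t2.2} is invoked for the same purpose.

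One technical point deserves care. In the forward direction you evaluate at moving points: after rescaling, $X_k = (x_k-Q)/r_k$ need not converge, and even after extracting a convergent subsequence $X_{k'}\to X_\infty$ you need $|\nabla D_{\mu_{Q,r_{k'}},\alpha}(X_{k'})|\to |\nabla D_{c_Q\H^d|_{P_Q},\alpha}(X_\infty)|$ with \emph{both} the measure and the point moving. Pointwise continuity in $\mu$ at a fixed $x$, which is what your dyadic-annuli argument most directly gives, is not quite enough; you want locally uniform convergence of $\nabla D_{\mu_n,\alpha}$ on compact subsets of $\R^n\sm\supp\nu$. This upgrade is easy (the second-derivative bounds from Ahlfors regularity give local equi-Lipschitzness of $\nabla D_{\mu_n,\alpha}$, so pointwise plus Arzel\`a--Ascoli yields local uniformity), and is exactly what the paper's Lemma~\ref{l:convunderblowups} states. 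In the converse direction you evaluate at a fixed $x$, so pointwise convergence suffices there. With this small strengthening made explicit, your argument is correct and matches the paper's proof in both structure and ingredients; the only genuine variation is that you prove the convergence lemma via a dyadic annular decomposition rather than the Arzel\`a--Ascoli argument the paper uses, and you state the forward-direction blow-up as a full limit $\mu_{Q,r}\rightharpoonup c_Q\H^d|_{P_Q}$ (which indeed holds $\mu$-a.e.\ for rectifiable AR measures), whereas the paper works subsequentially and then checks the limit is independent of the subsequence.
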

Continuing to think of $\nabla D_{\mu, \alpha}$ as a slightly smoother version of Riesz transform, these results are in the vein of Tolsa's, \cite{To1}, who shows that the existence of principle values of the Riesz transform is equivalent to rectifiability. We remark that  
the $L^2$ boundedness of the operator 
$$\frac{d+\alpha}\alpha \left( \int_{y\in E} |x-y|^{-d-\alpha} d\mu(y)\right)^{-\frac 1\alpha-1}  \int_{y\in E} \frac{x-y}{|x-y|^{d+\alpha+2}} \,f(y) \, d\mu(y), 
 $$
naturally associated to $\nabla D_{\mu, \alpha}$, is valid on all Ahlfors regular sets, using a simple domination by the Hardy-Littlewood maximal function. 
Thus, the celebrated result of \cite{NToV}  that says that for an Ahlfors regular set $E$ of co-dimension $1$, $E$ is uniformly rectifiable 
if and only if the Riesz kernel defines a bounded operator on $L^2(E, \H^{n-1})$
trivially fails in our case. Yet, much as in \cite{To1}, the existence of the limits characterizes rectifiability, albeit these are different, non-tangential limits.

 Finally, let us remark that in both Theorems \ref{tintro1} and \ref{tintro2} we assume (implicitly) that $d$ is an integer in one direction (``(uniform) rectifiability implies control on the oscillation of $|\nabla D|$") whereas the fact that $d$ is an integer is a corollary of the geometric conclusion in the other direction (``control on the oscillation of $|\nabla D|$ implies (uniform) rectifiability").

\subsection{Harmonic Measure in co-dimension one and greater}
Associated to these distances is the degenerate elliptic PDE:
\begin{equation}\label{e:lalpha} L_{\mu, \alpha} u\equiv -\mathrm{div}\left(\frac{1}{D_{\mu, \alpha}^{n-d-1}}\nabla u\right) 
= 0 
.\end{equation} 

In \cite{DFM2}, elliptic estimates and some potential theory were 
established for solutions of $L_{\mu, \alpha}$ in the complement of $E$. Most saliently for our purposes, it was shown that a maximal principle holds and that the Dirichlet problem could be solved for continuous data. Thus, for $X \in \Omega \equiv \R^n\backslash E$, we can define a harmonic measure $\omega^X \equiv \omega^X_{\mu, \alpha}$ as the measure given by the Riesz representation theorem with the property that if $u_f$ is the unique solution to $$\begin{aligned} L_{\mu, \alpha} u_f =& 0,\:\: \text{in}\: \Omega,\\
u_f =& f,\:\: \text{in}\: E,\end{aligned}$$ then \begin{equation}\label{e:Lharmonicmeasure} u_f(X) = \int_{E} f(Q) d\omega_{\mu, \alpha}^X(Q).\end{equation}

The distances, $D_{\mu, \alpha}$, were introduced by \cite{DFM3}, as a smooth replacement for $\delta(x)$; this smoothness was essential to the proof of the co-dimension greater than one analogue of Dahlberg's theorem in \cite{DFM3}. Recall that for the Laplacian in co-dimension one, Dahlberg proved that for Lipschitz domains the harmonic measure is quantitatively absolutely continuous (precisely, an $A_\infty$-weight) with respect to surface  measure; see \cite{Da}. 
In \cite{DFM3}, the authors proved that in co-dimension greater than one, $\omega_{\mu, \alpha} \in A_\infty(d\mathcal H^d|_E)$ when $E$ is a graph with small Lipschitz constant. In work in progress, \cite{DM}, the first and third author are looking to extend this result to all uniformly rectifiable sets $E$.  

As mentioned above, the analogue of this program in co-dimension one has been a question of central interest for years, in particular, because the behavior of harmonic measure supported on a set $E$ has important consequences for the solutions of the Dirichlet problem in the complement of that set (see, e.g. \cite{FKP} and for recent results in higher co-dimension, \cite{MZ}). Recently, Azzam, Hofmann, Martell, Mourgoglou and Tolsa \cite{AHMMT} have completed this program in co-dimension one for the Laplacian. To be precise, they start with an open set $\Omega \subset \mathbb R^n$ which satisfies the interior corkscrew condition (roughly this is used to rule out cusps in $\partial \Omega$ pointing into $\overline{\Omega}^c$). They further assume that $\partial \Omega$ is $(n-1)$-Ahlfors regular. In one direction, they show that an additional connectivity near the boundary assumption (known as the weak local John condition) on $\Omega$ combined with the $(n-1)$-uniform rectifiability of $\partial \Omega$ implies that the harmonic measure of $\Omega$ is quantitatively absolutely continuous with respect to $\mathcal H^{n-1}|_{\partial \Omega}$ (they use a condition known as weak-$A_\infty$, which takes into account the pole of the harmonic measure and is natural due to the potentially nasty topology of $\Omega$). The aforementioned work of \cite{DM} should be seen as a generalization of this result to higher co-dimension. 

More impressively, uniform rectifiability and the weak local John condition are necessary and sufficient. Indeed, under the same assumptions of interior corkscrews and $(n-1)$-Ahlfors regularity of the boundary as above, \cite{AHMMT} shows that if the harmonic measure in $\Omega$ is in weak $A_\infty$, then it must be that $\partial \Omega$ is uniformly rectifiable and $\Omega$ satisfies the weak local John condition (actually that weak $A_\infty$ implies uniform rectifiability was already known, c.f. \cite{HLMN}. The contribution of \cite{AHMMT} is to show that weak $A_\infty$ implies the weak local John condition on $\Omega$). For a more precise description of these results and discussion on the interplay between these assumptions, we suggest the introduction of \cite{AHMMT}. 

Our initial goal was to connect the USFE and the existence of non-tangential limits with the behavior of 
the harmonic measure, $\omega_{\mu, \alpha}$, 
with the hopes of proving a higher co-dimension version of \cite{HLMN} and therefore characterizing uniform rectifiability by the behavior of $\omega_{\mu, \alpha}$ (note that the topological conditions above, namely the interior corkscrew condition and the weak local John condition, are satisfied by $\Omega \equiv \mathbb R^n\backslash E$ whenever $E$ is $d$-Ahlfors regular with $d < n-1$). 

However, as mentioned above, such a result turns out to be completely false in some cases.
There is a specific value of $\alpha$, described in Section \ref{s:MagicAlpha}, where $D_{\mu, \alpha}$ itself is a solution of $L_{\mu, \alpha} u=0$. In this scenario, we can explicitly compute $\omega_{\mu, \alpha}$ by showing that $D_{\mu, \alpha}$ is, in fact, the Green function with pole at infinity (see Definition \ref{d:poleatinfinity} and Corollary \ref{c:magicomegafinite}).

More precisely, we show that $\omega_{\mu,\alpha}$ is proportional to 
$\sigma = \mathcal H^d|_E$ for {\it any Ahlfors regular set $E$}, including purely unrectifiable ones (Theorem \ref{t:magicomegaatinfinity}). Thus, in the case of ``magic-$\alpha$" 
the converse to \cite{DM} (hence the 
higher co-dimension generalization of \cite{HLMN}) fails in the most spectacular way possible. 
We also show that for any rectifiable set $E$, the harmonic measure for 
magic $\alpha$ is a constant multiple of $\sigma$ (see Corollary \ref{c:poissonkernelconstant}).  
This is surprising for two reasons. First, 
for the Laplacian in co-dimension one, under mild topological assumptions, the only set for which $\omega = \sigma$ is the half-space (\cite{kenigtoroAC}; here $\omega$ is the usual harmonic measure for the Laplacian). Second, there are very few situations in co-dimension one in which the Poisson kernel, $\frac{d\omega}{d\sigma}$, or the Green function can be precisely computed. Essentially only in the presence of lots of symmetry (e.g. the ball) or where there is an explicit conformal transformation from the ball (e.g. polygonal domains in $\mathbb R^2$) are the Poisson and Green kernels known. Here, for magic $\alpha$, we are able to compute the Green function with pole at infinity {\it for any Ahlfors regular set} and the Poisson kernel $\frac{d\omega_{\mu,\alpha}}{d\sigma}$ {\it for any rectifiable set} $E$. 

We now expect that the situation for magic $\alpha$ is really exceptional, and hope to make precise how this is so in future investigations.

\section{The square function estimate for uniformly rectifiable sets}\label{s:URimpliesUSFE}

In this section we prove the direct results concerning the USFE.

\begin{thm} \label{t:URimpliesUSFE}
Let $n \geq 2$, $E \subset \R^n$ be uniformly rectifiable set of dimension $d < n$ (so $d\in \mathbb N$), and let 
$\mu$ be a 
$d$-Ahlfors regular measure whose support is $E$. Then for each $\beta > 0$, 
$D_{\mu, \beta}$ satisfies the USFE. That is, if $D_{\mu, \beta}$ is as in \eqref{1.1} and \eqref{1.3}, and $F$ is defined
as in \eqref{1.4}, then \eqref{1.5} holds.
\end{thm}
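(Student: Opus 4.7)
The plan is to combine (i) the exact vanishing of $F$ on flat models, with (ii) Tolsa's Carleson estimate on the $\alpha$-numbers of a uniformly rectifiable set, linked by a localization of the kernels that build $F$. First, if $P$ is a $d$-plane and $\mu_P = c\,\H^d|_P$ for some $c>0$, then translation invariance along $P$, rotational invariance around $P$, and the scaling $R_{\mu_P,\beta}(\lambda x+p)=\lambda^{-\beta}R_{\mu_P,\beta}(x)$ for $p\in P$, $\lambda>0$ (visible from \eqref{1.1}) force $D_{\mu_P,\beta}(x)$ to be a constant multiple of $\dist(x,P)$, so $|\nabla D|$ is constant on each side of $P$ and $F_{\mu_P,\beta}\equiv 0$. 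Expanding \eqref{nabD} and differentiating once more, $F(x)^2$ becomes a rational expression in the finitely many integrals
\[I^{(k)}_{i_1\cdots i_\ell}(x;\mu)=\int_E\frac{(x-y)_{i_1}\cdots(x-y)_{i_\ell}}{|x-y|^{d+\beta+k}}\,d\mu(y),\qquad 0\leq\ell\leq 2,\ 0\leq k\leq 4,\]
whose ``denominator'' $R(x)=I^{(0)}(x;\mu)$ is bounded below by $c\,\delta(x)^{-\beta}$ via \eqref{1.2}, and each integrand is $C^\infty$ in $y\neq x$ with $|\nabla_y^j\text{kernel}|\lesssim|x-y|^{\ell-d-\beta-k-j}$.

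Next, I would prove the pointwise estimate $F(x)\leq C\,\alpha_\mu(x_0,C_0\delta(x))$, where $x_0\in E$ satisfies $|x-x_0|=\delta(x)$ and $\alpha_\mu(x_0,r)$ is Tolsa's $\alpha$-number (the $r^{-d-1}$-normalized Wasserstein-type distance from $\mu$ to the closest flat measure inside $B(x_0,r)$). Fixing $r=C_0\delta(x)$ and selecting a near-minimizing flat measure $\mu^{\flat}=c_0\,\H^d|_{P_0}$, the flat-case computation above gives that the rational expression defining $F$ vanishes on $\mu^{\flat}$, so $F(x)$ equals a polynomial in the differences $I^{(k)}_{i_1\cdots i_\ell}(x;\mu)-I^{(k)}_{i_1\cdots i_\ell}(x;\mu^{\flat})$ with coefficients controlled using \eqref{1.2} for both measures. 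Inside $B(x_0,r)$ the relevant kernels are $C\delta(x)^{\ell-d-\beta-k-1}$-Lipschitz, so $\alpha$-number testing bounds the local piece by $C\delta(x)^{\ell-\beta-k}\alpha_\mu(x_0,r)$; outside $B(x_0,r)$, Ahlfors regularity of both measures together with the gain $\beta>0$ yield geometrically decaying dyadic annular tails, absorbed by enlarging $C_0$.

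To finish, I would invoke Tolsa's theorem that $\alpha_\mu(y,r)^2\,d\mu(y)\,dr/r$ is a Carleson measure on $E\times\R_+$ when $E$ is UR. Partition $\Omega$ into Whitney cubes $W$ with $\ell_W\sim\delta_W:=\dist(W,E)$, attach a boundary point $x_W\in E$ at distance $\sim\delta_W$ from $W$, and use $|W|\sim\delta_W^n$ together with $\mu(B(x_W,\delta_W))\gtrsim\delta_W^d$ to convert
\[\int_{B(Q,R)\cap\Omega}F(x)^2\,\delta(x)^{-n+d}\,dx\ \lesssim\ \sum_W\alpha_\mu(x_W,C_0\delta_W)^2\,\delta_W^d\ \lesssim\ R^d,\]
which is exactly the USFE \eqref{1.5}.

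The main obstacle will be the pointwise $\alpha$-number bound in the second step. Because $F$ depends nonlinearly on $\mu$ (a ratio of products of the $I^{(k)}_{i_1\cdots i_\ell}$), one cannot substitute $\mu^{\flat}$ naively and must both exploit \eqref{1.2} to control the denominator $R(x)$ and carefully select the constant $c_0$ in $\mu^{\flat}$ so that the leading contribution cancels. A secondary technicality is that Tolsa's $\alpha$-number is defined by Lipschitz test functions supported in $B(x_0,r)$, so the tails of the defining integrals must be truncated by smooth cutoffs whose errors are square-summable across dyadic scales; this relies crucially on $\beta>0$ and would fail at the formal limit $\beta=0$.
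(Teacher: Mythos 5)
Your overall strategy matches the paper's: exploit that $F\equiv 0$ for flat measures, reduce to a pointwise estimate by $\alpha$-numbers, invoke Tolsa's Carleson bound \eqref{3.23}, and close with a Whitney/Fubini conversion. The flat-case computation and the final Whitney-sum step are fine.

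The genuine gap is in your asserted pointwise estimate $F(x)\le C\,\alpha_\mu(x_0,C_0\delta(x))$, which is false as stated and cannot be repaired by ``enlarging $C_0$.'' A flat measure $\mu^\flat$ chosen to approximate $\mu$ well at the single scale $\sim\delta(x)$ tells you nothing about how $\mu$ looks at scales $2^l\delta(x)$ for $l\ge 1$, and the kernel $|x-y|^{-d-\beta}$, while decaying, still picks up contributions from every dyadic annulus. Your claim that ``Ahlfors regularity of both measures together with the gain $\beta>0$'' absorbs the tails is exactly where the argument breaks: using only Ahlfors regularity on the annulus at scale $2^l\delta(x)$ gives the contribution $\sim 2^{-\beta l}\delta(x)^{-\beta}$ to $|R(x)-R_{\mu^\flat}(x)|$, which sums to the \emph{trivial} bound $C\delta(x)^{-\beta}$ (and hence $F(x)\lesssim 1$), not something that vanishes with the $\alpha$-number. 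Getting any cancellation from the far annuli forces you to compare $\mu$ to a \emph{different, scale-adapted} flat measure $\nu_l$ on each annulus and telescope, which is why the correct pointwise estimate is the geometrically-weighted \emph{multi-scale} sum $F(x)\lesssim \sum_{l\ge 0}2^{-(\beta+1)l}\alpha(y,2^{l+6}\delta(x))$ for $y\in E$ near $x$, exactly the paper's \eqref{e:keyalphaestimate}. One then converts the square of this sum (via Cauchy--Schwarz against the geometric weights) plus Fubini into the Carleson bound. Your own remark about errors being ``square-summable across dyadic scales'' is a symptom that you saw the issue, but it contradicts the single-scale pointwise bound you wrote down; as written, the Carleson estimate you claim at the end would not follow.
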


See Corollary \ref{c:URimpliestildeUSFE} concerning another, roughly equivalent, function $\wt F$.
Notice that we do not assume that $d < n-1$ here, but we do not talk about the possible
relations with the operator $L$ either.

The main outline of the proof is as follows; it is clear that $F_{\nu, \beta} = 0$ when $\nu$ is a multiple of Hausdorff measure restricted to a plane (we call these measures flat; see \eqref{e:flatmeasures}, below). The key estimate, \eqref{e:keyalphaestimate}, makes this 
quantitative:  
the size of $F_{\mu, \beta}$ can be estimated by the distance between $\mu$ and a well chosen flat measure (this distance is measured by the $\alpha$ numbers, see \eqref{e:alphanumbers}). Tolsa's characterization of uniform rectifiability using $\alpha$ numbers finishes the proof. 

\begin{proof}
Fix $\beta > 0$ and $\mu$ as in the Theorem statement. From now on we will abuse notation and refer to $D_{\mu, \beta}$ as $D$ (similarly for $R$ and $F$). Occasionally, we will have to work with $D_{\nu, \beta}$ for some other measure $\nu$. Here again we will suppress the dependence on $\beta$ and just refer to $D_{\nu}$. 

 Before we can introduce the key estimate alluded to above, we must introduce the Wasserstein distances and $\alpha$-numbers.  Let us denote by $\cF = \cF_d$ the set of flat measures;
\begin{equation}\label{e:flatmeasures}
\mbox{ a flat measure 
is a positive multiple of the Lebesgue measure on an affine $d$-plane.}
\end{equation}

We are interested in Wasserstein distances, which we define as follows. 
Given two positive measures $\mu$ and $\nu$ and a ball $B(x,r)$, 
we define $\cD_{x,r}(\mu,\nu)$ by 
\begin{equation}\label{e:Wasserstein}
\cD_{x,r}(\mu,\nu) = r^{-d-1}\sup_{f \in \Lambda(x,r)} \Big|\int_{B(x,r)} f \big( d\mu - d\nu \big) \Big|,
\end{equation}
where we denote by $\Lambda(x,r)$ the set of functions $f$ that are $1$-Lipschitz on $\R^n$ and 
vanish on $\R^n \sm B(x,r)$. Notice that the normalization is such that $\cD_{x,r}(\mu,\nu) \leq C$ 
when $\mu$ and $\nu$ are Ahlfors regular, with a constant $C$ that does not depend on $x$ or $r$.
Let us note that we will not require $\mu, \nu$ to be probability measures, so it is misleading to say that $\cD_{x,r}$ is a ``distance". 

We are especially interested in the numbers
\begin{equation} \label{e:alphanumbers}
\alpha(x,r) = \inf_{\nu \in \cF} \cD_{x,r}(\mu,\nu),
\end{equation}
where $x\in \R^n$ and $r > 0$ are such that $B(x,r)$ meets $E$. These ``$\alpha$-numbers" measure the local 
Wasserstein distances from $\mu$ to flat measures. 
In the context of quantitative rectifiability, these numbers were introduced and widely used 
by X. Tolsa (see, e.g. \cite{To}), to create a theory for measures that is analogous to P. Jones' 
$\beta$-numbers. We will use the fact that, see \cite{To}, for any uniformly rectifiable set $E$ has Carleson measure estimates on the $\alpha(x,r)^2$ (see \eqref{3.23}).

We can now introduce the key estimate: fix $x \in \Omega$ and set $r_0 = \delta(x)$ and for $k \geq 0$ let $r_k \equiv 2^k r_0$. We will prove that 
 for $1\leq i \leq n$, 
\begin{equation} \label{e:keyalphaestimate}
\begin{aligned}
\big| \d_i \big(|\nabla D(x)|^2\big) \big| &= 
\big| \d_i \big(|\nabla D(x)|^2\big) - \d_i \big(|\nabla D_{\nu}(x)|^2\big) \big|
\\
&\leq C \delta(x)^{-1} \sum_{l \geq 0} 
2^{-(\beta+1) l} 
\alpha(y,2^{l+6}r_0),
\end{aligned}
\end{equation}
 for $y \in E \cap B(x,16\delta(x))$, 
and  where $\nu$ is a correctly chosen flat measure. 
The first equality comes from the easy fact (proven below)
that for any flat measure $\nu$,  $|\nabla D_{\nu}(x)|^2$
is constant. 

Before we prove \eqref{e:keyalphaestimate}, let us see how the estimate implies the final result. Theorem 1.2 in \cite{To} says (amongst other things) that when $E$ is a $d$-dimensional 
uniformly rectifiable set and $\mu$ is an Ahlfors regular measure whose support is $E$, 
\begin{equation} \label{3.23}
\alpha(x,r)^2 \, \frac{d\mu(x) dr}{r} \ \text{ is a Carleson measure on  } E \times \R_+, 
\end{equation}
which means that for $X \in E$ and $R > 0$,
\begin{equation} \label{e:tolsaalpha}
\int_{x\in E \cap B(X,R)}\int_{r=0}^R \alpha(x,r)^2 \frac{d\mu(x) dr}{r} \leq C R^d.
\end{equation}
To be precise, in \cite{To}, the estimate \eqref{e:tolsaalpha} is not written in terms of
the numbers $\alpha(x,r)^2$, but numbers $\alpha(Q)$ indexed by dyadic pseudocubes $Q \subset E$;
however the $\alpha(Q)$ and the $\alpha(x,r)$ mutually dominate each other for comparable values of $l(Q)$ and $r$, and it is a standard argument based on Fubini's theorem to go from the condition of \cite{To} to \eqref{e:tolsaalpha}.
We skip the computation because it is both easy and done in Lemma 5.9 in \cite{DFM3}.

Notice that the function $\alpha(x,r)$ depends both on $E$ and $\mu$, and \eqref{e:tolsaalpha} contains information 
both on the geometry of $E$ (the fact that it is close to a $d$-plane in most balls) and on the distribution
of $\mu$ inside $E$. In fact, Tolsa's result is already significant when $E = \R^d$ and 
$d\mu = f d\lambda$ for some function $f$ such that $C^{-1} \leq f \leq C$.

Now we claim that the Carleson measure estimate \eqref{1.5} follows from \eqref{e:tolsaalpha} and the key inequality \eqref{e:keyalphaestimate}, let us sketch this; first we use the estimate \eqref{e:keyalphaestimate} to estimate $$\int_{B(Q,R)} F^2(X)\delta(X)^{-n+d}dX \leq C\int_{B(Q,R)}\fint_{B(X, 16 \delta(X))\cap E} a(Y, 2^6\delta(X))^2d\mu(y) \delta^{-n+d}dX,$$ where the $a$ function represents the sum on the right hand side of \eqref{e:keyalphaestimate} (this notation is from Lemma 5.89 of \cite{DFM3}). Using a Whitney decomposition of $B(Q,R)$ and changing the order of integration we can dominate the above integral by a sum over dyadic subcubes of $E\cap B(Q, 16R)$.  Arguing as above or as in Lemma 5.9 in \cite{DFM3}, we estimate $$\int_{B(Q,R)} F^2(X)\delta(X)^{-n+d}dX \leq \int_0^{16R}\int_{B(Q,s)} a(x,2^6s)^2\frac{d\mu(x)ds}{s}.$$ Finally, this last integral can be dominated by the one in \eqref{e:tolsaalpha} following the computation in Lemma 5.89 of \cite{DFM3}. 


\medskip

\noindent {\bf Proving \eqref{e:keyalphaestimate}:} To  prove \eqref{e:keyalphaestimate}, recall $F = F_{\mu, \beta}$ 
from \eqref{1.4} and  $R = R_{\mu, \beta}$ 
from \eqref{1.1}; we will need to compute their derivatives. Set $h(z) = |z|^{-d-\beta}$; this is a smooth function on $\R^{n}_\ast = \R^n \sm \{ 0 \}$, 
and we denote by $\nabla^j h$ its iterated gradient (i.e. the collection of all its derivatives of order $j$).
Notice that $R$ is smooth on $\Omega$, and 
\begin{equation}\label{e:diffR}
\nabla^j R(x) = \int_E \nabla^j h(x-y) d\mu(y).
\end{equation}
Next, $D$ is smooth on $\Omega$, and
\begin{equation}\label{e:diffD}
\nabla D(x) = -  \frac{1}{\beta} 
R(x)^{-\frac{1}{\beta}-1} \nabla R(x)
\end{equation}
and 
\begin{equation}\label{e:normgrads}
\begin{aligned}
|\nabla D(x)|^2 &= \frac{1}{\beta^2} R(x)^{-\frac{2}{\beta}-2} 
|\nabla R(x)|^2
= \frac{1}{\beta^2} R(x)^{-\frac{2}{\beta}-2} \sum_j |\d_j R(x)|^2
\end{aligned}
\end{equation}
and then 
\begin{equation}\label{e:diffnormgrad}
\begin{aligned}
\d_i \big(|\nabla D(x)|^2\big) &= -\frac{1}{\beta^2} \frac{2+2\beta}{\beta}
R(x)^{-\frac{2}{\beta}-3}  \d_i R(x)\sum_j |\d_j R(x)|^2
\\&\hskip2cm
+ \frac{2}{\beta^2} 
R(x)^{-\frac{2}{\beta}-2} \sum_j \d_j R(x) \d_i\d_j R(x).
\end{aligned}
\end{equation}
The precise structure of \eqref{e:diffnormgrad} is not so important; it suffices that we can compute (and bound) the errors we get from modifying the measure $\mu$ more or less explicitly.

The computations above are simpler when $F = F_{\nu}$ for a flat $\nu$; let $\nu = \lambda \H^d_{\vert P}$ for some $\lambda > 0$. In this case, 
\begin{equation} \label{e:Rforflat}
R_{\nu}(x) = c_1 \lambda \delta_P^{-\beta}(x),
\end{equation}
where $c_1 > 0$ is a constant that depends on $d$ and $\alpha$, and $\delta_P(x) \equiv \dist(x,P)$. 
In this case \eqref{1.3} yields
\begin{equation} \label{e:Dforflat}
D_{\nu}(x) = c_2 \lambda^{-1/\beta} \delta_P(x),
\end{equation}
with $c_2 = c_1^{-1/\beta}$ 
It follows that $|\nabla D_{\nu}|^2 = c_2^2 \lambda^{-2/\beta}$, 
and $\d_i \big(|\nabla D_{\nu}(x)|^2\big) = 0$ for $1 \leq i \leq n$.

Our next goal is to estimate the differences, $|\nabla^j R(x) -\nabla^j R_{\nu}(x)|$ where $j \geq 0$ and $\nu$ is a well chosen flat measure. In turn, these will allow us to estimate the difference, $\d_i \big(|\nabla D(x)|^2\big)- \d_i \big(|\nabla D_\nu(x)|^2\big)$. Given the complexity of \eqref{e:diffnormgrad}, we expect lots of terms, but they will all involve differences of the form $|\nabla^j R(x) -\nabla^j R_{\nu}(x)|$. We will start with the simplest case, $j = 0$. 

Recall that $x\in \Omega$, $r_0 = \delta(x)$ and, for $k \geq 0$, $r_k = 2^k r_0$. 
Let $\varphi$ be a (fixed) smooth bump function such that $0 \leq \varphi \leq 1$ on $\R^n$, 
$\varphi$ is radial, $\varphi = 1$ on $B(0,8r_0)$, $\varphi = 0$ on $\R^n \sm B(0,16r_0)$; 
then let $\varphi_0 = \varphi$ and, for $k \geq 1$, $\varphi_k(x) = \varphi(2^{-k}x) - \varphi(2^{-k+1}x)$ (so that  $\varphi_k$ is supported on
$A_k \equiv \ol B(0, 2^{k+4}r_0) \sm B(0,2^{k+2}r_0)$, where $A_0 = \ol B(0,16r_0)$). 
Note that 
$\sum_{k\geq 0} \varphi_k = 1$.

Next we will choose a flat measure $\nu_k$ which is nearly optimal for the definition of 
$\alpha(x,32r_k)$. 
That is, we will let $\nu_k = \lambda_k \H^d_{\vert P_k}$ for some $\lambda_k > 0$ 
and some affine $d$-plane $P_k$, so that, in particular,
\begin{equation}\label{e:pickingnu}
\cD_{x,32r_k}(\mu,\nu_k) \leq C \alpha(x,32r_k),
\end{equation}
where $C$ depends on $n$, $d$, and the Ahlfors regularity constant for $\mu$. 
 As we shall see, at points and scales
where $\mu$ is not well approximated by flat measures, 
 rather than choosing the measure $\nu_k$ which minimizes the right hand side of \eqref{e:pickingnu}, we will prefer to make sure that we keep some control on $\lambda_k$ 
and the support of $\nu_k$. 

To pick the $\nu_k$, let $c > 0$ be small, to be chosen soon. If $\alpha(x,32r_k) \leq c$, 
let us just pick $\nu_k$ so that $\cD_{x,32r_k}(\mu,\nu_k) = \alpha(x,32r_k)$; to see that such a minimizer exists (and is nice), 
recall that $\mu(B(x,2r_k)) \geq C^{-1} r_k^{d}$, by Ahlfors regularity; if $c$ is small enough,
depending only on $n$, $d$, and the Ahlfors regularity constant for $\mu$, then any
flat measure $\eta$ such that $\cD_{x,32r_k}(\mu,\eta) \leq 2 \alpha(x,32r_k)$ must be such that
$\eta(B(x,2r_k)) \geq (2C)^{-1} r_k^{d}$ (test \eqref{e:Wasserstein} on a bump function centered at $x$).
Hence (writing $\eta = \lambda \H^d_{\vert P}$ as above)
\begin{equation}\label{e:compactnessforeta}
P \cap B(x,2r_k) \neq \emptyset \ \text{ and } \ C^{-1} \leq \lambda \leq C.
\end{equation}
It is now easy to find a minimizing $\nu_k = \lambda_k \H^d_{\vert P_k}$, where \eqref{e:compactnessforeta} holds for $P_k, \lambda_k$.  
In the remaining case where $\alpha(x,32r_k) \geq c$, we do not complicate our life, and pick
$\nu_k = \H^d_{\vert P_k}$, where $P_k$ is any $d$-plane through $B(x,2r_k)$. Then \eqref{e:pickingnu} and \eqref{e:compactnessforeta}
hold trivially.

Set $\nu = \nu_0$ and $P = P_0$. 
By the translation invariance of our problem, we may assume that
the origin lies in $P \cap B(x,2r_0)$ (this will simplify our notation, because this way 
we don't need to translate our bump functions $\varphi_k$).

Recall that $R_{\nu}(x) = c_1 \lambda_0 \delta_P^{-\beta}(x)$ by \eqref{e:Rforflat}. Recall, furthermore, the notation in \eqref{e:diffR}.  
We want to estimate the difference
\begin{equation} \label{e:DiffinR}
|R(x) - R_{\nu}(x)| 
= \Big|\sum_{k \geq 0} \int_{A_k} \varphi_k(y) h(x-y) \big(d\mu - d\nu)(y)\Big|
\end{equation}
(by \eqref{1.1}, $\sum_k \varphi_k = 1$ and $\mathrm{supp}\; \varphi_k \subset A_k$).

Notice that $\varphi_k(y) h(x-y)$ is Lipschitz in $y$, with a constant at most $C r_k^{-d-\beta-1}$,
and it vanishes outside of $B(0, 2^{k+4}r_0) \subset B(x, 2^{k+5}r_0)$; thus by \eqref{e:Wasserstein} (applied with
$C^{-1} r_k^{d+\beta+1}  \varphi_k(\cdot) h(x-\cdot) \in \Lambda(x,2^{k+5}r_0)$)
\begin{equation} \label{e:intAlessthanD}
\Big| \int_{A_k} \varphi_k(y) h(x-y) \big(d\mu - d\nu \big)(y)\Big|
\leq C r_k^{-\beta} \cD_{x,2^{k+5}r_0}(\mu,\nu).
\end{equation}
For $k=0$, $\cD_{x,2^{k+5}r_0}(\mu,\nu) = \cD_{x,32r_0}(\mu,\nu) = \cD_{x,32r_0}(\mu,\nu_0)$.
For $k \geq 1$, we use intermediate measures. We start with 
\begin{equation} \label{e:Dtriangleinequality}
\cD_{x,2^{k+5}r_0}(\mu,\nu) \leq \cD_{x,2^{k+5}r_0}(\mu,\nu_k) +  \cD_{x,2^{k+5}r_0}(\nu_k,\nu_0)
\leq \cD_{x,2^{k+5}r_0}(\mu,\nu_k) + \sum_{l = 1}^k \cD_{x,2^{k+5}r_0}(\nu_{l},\nu_{l-1}),
\end{equation}
where the triangle inequality comes directly from the definition \eqref{e:Wasserstein}. We claim that 
\begin{equation} \label{e:Dbetweenplanes}
\cD_{x,2^{k+5}r_0}(\nu_{l},\nu_{l-1}) \leq C 
 \alpha(x,2^{l+4}r_0),  
\end{equation}
because both measures $\nu_{l}$ and $\nu_{l-1}$ approximate $\mu$ well in $B(x,2^{l+4}r_0)$.
The general idea is that since the two measures are flat measures associated to planes that
pass near $x$ (i.e. \eqref{e:compactnessforeta} holds), a good control on $B(x,2^{l+5}r_0)$ implies a good control on $B(x,2^{k+5}r_0)$.
The proof is almost the same as for equation (5.83) in \cite{DFM3}, so we leave it.

We return to \eqref{e:DiffinR}: use \eqref{e:pickingnu}, \eqref{e:intAlessthanD}, \eqref{e:Dtriangleinequality}, and \eqref{e:Dbetweenplanes} to obtain 
\begin{equation} \label{e:diffinR}
|R(x) - R_{\nu}(x)| 
\leq C\sum_{k \geq 0} r_k^{-\beta} \sum_{0 \leq l \leq k} \alpha(x,2^{l+5}r_0)
\leq C \sum_{l \geq 0} r_l^{-\beta} \alpha(x,2^{l+5}r_0).
\end{equation}
 Notice that 
$\alpha(x,2^{l+5}r_0) \leq 2^{d+1} \alpha(y,2^{l+6}r_0)$ for every $y \in B(x,16r_0)$, just by \eqref{e:Wasserstein},
\eqref{e:alphanumbers}, and because $B(x,2^{l+5}r_0) \subset B(y,2^{l+6}r_0)$ and hence 
$\Lambda(x,2^{l+5}r_0) \subset \Lambda(y,2^{l+6}r_0)$ (recall, from \eqref{e:Wasserstein}, that $\Lambda(x,r)$ is the set of functions $f$ that are $1$-Lipschitz on $\R^n$ and 
vanish on $\R^n \sm B(x,r)$). Thus
\begin{equation} \label{e:DiffinR2}
\begin{aligned}
|R(x) - R_{\nu}(x)| 
&\leq C \sum_{l \geq 0} r_l^{-\beta} \alpha(y,2^{l+6}r_0)
= C r_0^{-\beta} \sum_{l \geq 0} 2^{-\beta l} \alpha(y,2^{l+6}r_0)
\\
&= C \delta(x)^{-\beta} \sum_{l \geq 0} 2^{-\beta l} \alpha(y,2^{l+6}r_0)
\end{aligned}
\end{equation}
for $y \in B(x,16r_0)$. 

This was our estimate for $R$, but we have a similar estimate for the iterated derivatives of 
$R$. That is, we start from \eqref{e:diffR} instead of \eqref{1.1}, and observe that we can compute as above,
with an extra $|x-y|^{-j}$, which transforms into an extra $r_k^{-j} \leq C \delta(x)^{-j}$ in the estimates below. 
This yields
\begin{equation} \label{e:diffinDR}
\begin{aligned}
|\nabla^j R(x) - \nabla^j R_{\nu}(x)|
&= \Big| \int_E \nabla^j h(x-y) (d\mu-d\nu)(y) \Big|
\\
&\leq  C \delta(x)^{-\beta-j} \sum_{l \geq 0} 2^{-(\beta+j) l} \alpha(y,2^{l+6}r_0)
\end{aligned}
\end{equation}
for $y \in B(x,16\delta(x))$.
Observe also that a direct estimate with \eqref{e:diffR} yields
\begin{equation} \label{e:boundongradR}
|\nabla^j R(x)| \leq C \delta(x)^{-\beta-j}. 
\end{equation}
 Let us check that
if we pick $P_0$, our initial plane, correctly we have a similar estimate for $R_{\nu,\beta}$,
 i.e., 
\begin{equation}\label{e:boundongradflat}
|\nabla^j R_{\nu}(x)| \leq C \delta(x)^{-\beta-j}.
\end{equation} Recall from the discussion 
below \eqref{e:pickingnu}, we choose $\nu_0$ such that 
$\cD_{x,32r_0}(\mu,\nu_k) = \alpha(x,32r_0)$ when $\alpha(x,32r_kr) \leq c$. 
In this regime, we claim that, perhaps by choosing $c$ a little bit smaller, the following 
 inequality holds: 
\begin{equation}\label{e:disttoEboundedbynunaught}
\dist(y,E) \leq 10^{-1} r_0 \ \text{ for } y \in P_0 \cap B(x,16r_0).
\end{equation}
Otherwise, pick $y\in P_0 \cap B(x,16r_0)$, at distance at least $10^{-1} r_0$ from $E$,
choose a Lipschitz bump function $f$, supported in $B(y,2\cdot 10^{-2} r_0)$ so that 
$f=10^{-2} r_0$ on $B(y,10^{-2} r_0)$ and $f$ is $1$-Lipschitz. 
Then \eqref{e:Wasserstein} yields
$|\int f (d\mu-d\nu_0)| \leq c r_0^{d+1}$, while $\int f d\mu = 0$ 
(because $E$ does not meet $B(y,2\cdot 10^{-2} r_0)$) and 
$\int f d\nu_0 \geq 10^{-2} r_0 \nu(B(y,\tau r_0)) \geq C^{-1}r_0^{d+1}$ 
by \eqref{e:compactnessforeta} and because $y\in P_0$. If we take $c$ small enough, 
we get a contradiction that proves \eqref{e:disttoEboundedbynunaught}. 
We deduce from this that 
\begin{equation}\label{e:xytogetherfornaught}
|y-x| \geq \frac{r_0}{2} \ \text{ for } y \in P_0,
\end{equation}
because either $y \in B(x,16r_0)$ and we use the fact that  $10^{-1}r_0 \stackrel{\eqref{e:disttoEboundedbynunaught}}{\geq} \dist(y,E) \geq \dist(x, E) - |y-x_0| = r_0 - |y-x_0|$, or else
$|y-x| \geq 16r_0$ anyway.

When $\alpha(x,32r_0) \geq c$, we decided to pick any $d$-plane through $B(x,2r_0)$,
and we simply make sure that \eqref{e:xytogetherfornaught} holds when we do this.

Once we have \eqref{e:xytogetherfornaught}, \eqref{e:boundongradflat} easily follows from \eqref{e:compactnessforeta} and the usual computations.

We may now return to our original formula, \eqref{e:diffnormgrad}. 
It says that $\d_i \big(|\nabla D(x)|^2\big)$ is a sum of $2n$ terms,
 and we claim that because of \eqref{e:boundongradR}, each of these terms 
is bounded from above by $C\delta(x)^{-1}$.

 Indeed, if we did not 
have any derivatives, we would simply get 
$C R(x)^{-\frac{2}{\beta}} = C D^{2} \leq C \delta(x)^{2}$ by \eqref{1.3} and \eqref{1.2}.
But we have three additional derivatives, which give an extra $\delta(x)^{-3}$. Altogether,
the brutal estimate is $|\d_i \big(|\nabla D(x)|^2\big)| \leq C \delta(x)^{-1}$.
By \eqref{e:boundongradflat}, we would have the same estimate when we replace $R$ with $R_{\nu,\beta}$
in some places. Now we need to estimate 
$\d_i \big(|\nabla D(x)|^2\big) - \d_i \big(|\nabla D_{\nu,\beta}(x)|^2\big)$,
which is a sum of terms like the above, except that now one of the terms of each product is
replaced with the corresponding difference involving $|\nabla^j R(x) - \nabla^j R_{\nu,\beta}(x)|$.
We use \eqref{e:diffinDR} for this difference (which allows us to multiply the estimate by a sum of $\alpha$-numbers),
keep the same estimates for the rest of each product, sum everything up, and get that
$$
\begin{aligned}
\big| \d_i \big(|\nabla D(x)|^2\big) \big| &= 
\big| \d_i \big(|\nabla D(x)|^2\big) - \d_i \big(|\nabla D_{\nu,\beta}(x)|^2\big) \big|
\\
&\leq C \delta(x)^{-1} \sum_{l \geq 0} 2^{-(\beta+1) l} \alpha(y,2^{l+6}r_0), 
\end{aligned}$$
for $y \in B(x,16\delta(x))$. This is \eqref{e:keyalphaestimate}, the desired result. 
\end{proof}

The attentive reader may ask why we raise $|\nabla D|$ to the second power in the definition of $F$ (see \eqref{1.4}). Indeed, this is done mostly for aesthetic reasons (mainly so that \eqref{e:diffnormgrad} doesn't look so nasty). In the following Corollary we show that our result still holds if $F$ is replaced by $\tilde{F}$ (which is the same except we do not square $|\nabla D|$). 

\begin{cor} \label{c:URimpliestildeUSFE}
Theorem \ref{t:URimpliesUSFE} is still valid when we replace $F(x)$ with
\begin{equation} \label{3.25}
\wt F(x) = \delta(x) \big|\nabla (|\nabla D|)(x)\big|.
\end{equation}
\end{cor}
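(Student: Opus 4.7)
The plan is to exploit the chain-rule identity $\nabla(|\nabla D|^2) = 2|\nabla D|\nabla(|\nabla D|)$ (valid where $|\nabla D|>0$), which rewrites as $\wt F(x) = F(x)/(2|\nabla D(x)|)$. Were $|\nabla D|$ uniformly bounded away from zero on $\Omega$, the corollary would follow immediately from Theorem \ref{t:URimpliesUSFE}. Unfortunately this pointwise lower bound can fail (e.g.\ by symmetry, $\nabla D$ vanishes at the center when $E$ is a sphere), so I will split $\Omega$ according to the size of $|\nabla D|$ and handle the two regions separately.

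I first record two auxiliary estimates. On the one hand, Cauchy-Schwarz applied to the identity $\partial_i(|\nabla D|) = |\nabla D|^{-1}\sum_j \partial_j D\,\partial_i\partial_j D$ gives $|\nabla(|\nabla D|)|\leq |\nabla^2 D|$ wherever $|\nabla D|>0$; combined with the brutal bound $|\nabla^2 D|\leq C\delta^{-1}$, obtained from \eqref{e:boundongradR} by the same power-counting used at the end of the proof of Theorem \ref{t:URimpliesUSFE}, this yields the pointwise estimate $\wt F\leq C$. On the other hand, repeating the argument that produced \eqref{e:keyalphaestimate}, but starting from the identity $\nabla D = -\beta^{-1}R^{-1/\beta-1}\nabla R$ and using \eqref{e:diffinDR} for $j=0,1$ together with \eqref{e:boundongradR}--\eqref{e:boundongradflat}, I obtain
$$|\nabla D(x)-\nabla D_{\nu_0}(x)|\leq C\sum_{l\geq 0}2^{-\beta l}\alpha(y,2^{l+6}r_0)$$
for $y\in E\cap B(x,16\delta(x))$, where $\nu_0$ is the flat approximating measure from the proof of Theorem \ref{t:URimpliesUSFE}.

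By \eqref{e:compactnessforeta}, $\lambda_0\leq C$ uniformly, hence $|\nabla D_{\nu_0}(x)|=c_2\lambda_0^{-1/\beta}\geq c_4>0$ for some constant $c_4$ depending only on $\mu,\beta$ and the Ahlfors regularity constants. Setting $c_0=c_4/2$, let $G=\{|\nabla D|\geq c_0\}$ and $B=\Omega\setminus G$. On $G$ one has $\wt F\leq F/(2c_0)$, so the Carleson estimate for $\wt F^2\delta^{-n+d}dx$ restricted to $G$ follows at once from Theorem \ref{t:URimpliesUSFE}. On $B$, the triangle inequality and the second auxiliary estimate force $\sum_l 2^{-\beta l}\alpha(y,2^{l+6}r_0)\geq c_3>0$; combining this lower bound with the pointwise bound $\wt F\leq C$ and the Cauchy-Schwarz-plus-\eqref{e:tolsaalpha} argument used at the end of the proof of Theorem \ref{t:URimpliesUSFE} yields
$$\int_{B(Q,R)\cap B}\wt F^2\delta^{-n+d}\,dx\leq C^2\!\int_{B(Q,R)\cap B}\!\delta^{-n+d}\,dx\leq \frac{C^2}{c_3^2}\int_{B(Q,R)}\Bigl(\sum_l 2^{-\beta l}\alpha\Bigr)^2\!\delta^{-n+d}\,dx\leq CR^d.$$
The main subtlety is the handling of the degenerate region $B$, where the identity $\wt F=F/(2|\nabla D|)$ is useless on its own; the crucial observation that $|\nabla D|$ can only be small when the flat approximation of $\mu$ is poor at scale $\delta(x)$ is what allows Tolsa's Carleson estimate to take over.
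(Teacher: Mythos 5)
Your proof is correct and follows essentially the same strategy as the paper's: split $\Omega$ according to whether $|\nabla D|$ is bounded below, use $\wt F\lesssim F$ on the good region, and on the degenerate region combine the pointwise bound $\wt F\leq C$ with the observation that a small $|\nabla D|$ forces a lower bound on the sum of $\alpha$-numbers, after which Tolsa's Carleson estimate finishes the job. The only cosmetic difference is that you detect the degeneracy via $|\nabla D(x)-\nabla D_{\nu_0}(x)|$ whereas the paper works directly with $|\nabla R(x)-\nabla R_{\nu,\beta}(x)|$ (using \eqref{e:diffinDR} with $j=1$); both give the same conclusion.
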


\begin{proof}
Noting that $|\nabla D| = (|\nabla D|^2)^{1/2}$ we see that 
$\wt F(x) = \frac{1}{2} F(x) |\nabla D|^{-1}$, at least for $x$ such that $\nabla D(x) \neq 0$.
Let $C_1 \geq 0$ be large, to be chosen soon (depending on $n$, $d$, and the Ahlfors regularity 
constant for $\mu$), and set $Z = \big\{ x\in \Omega \, ; \,  |\nabla D| \leq C_1^{-1}\big\}$. 
It is enough to control $\wt F(x)  \textbf{1}_{Z}(x)$, because we can use Theorem \ref{t:URimpliesUSFE} for the rest of $\wt F$.

Even on $Z$, $\wt F$ is not as large as one may fear; for $1 \leq i \leq n$, 
\begin{equation}\label{3.26}
\begin{aligned}
\Big| \d_i\big(|\nabla D| \big)(x) \Big| &= \Big| \d_i(\sqrt{|\nabla D|^2})(x) \Big|
= \frac12 \Big| \frac{\d_i \big(|\nabla D|^2 \big)(x)}{|\nabla D(x)|} \Big|
\\
&= \Big| \frac{\d_i \nabla D(x) \cdot \nabla D(x)}{|\nabla D(x)|} \Big|
\leq \big| \nabla^2 D(x) \big| \leq C \delta(x)^{-1}
\end{aligned}
\end{equation}
by brutal computations, and at the end, \eqref{e:diffD} and \eqref{e:boundongradR}. In particular, this implies that $\wt{F}$ is bounded uniformly on $Z$:

\begin{equation}\label{e:wFboundedonZ}
|\wt{F}(x)| = \delta(x)|\nabla |\nabla D(x)|| \leq C,\:\: \forall x \in Z.
\end{equation}

In addition, we claim that $Z$ itself is not large. Indeed, let $x \in Z$ be given; recall the notation used in the proof of
Theorem \ref{t:URimpliesUSFE}, specifically that $\nu$ is a well chosen flat measure so that $D_{\nu,\beta}(z) = c_2 \lambda_0^{-1/\beta} \delta_P(z)$. 
Hence, by \eqref{e:compactnessforeta}, $|\nabla D_{\nu,\beta}(x)| \geq C^{-1}$ and, by  \eqref{e:diffD} (and \eqref{e:xytogetherfornaught}),
\begin{equation}\label{3.27}
|\nabla R_{\nu,\beta}(x)| \geq C^{-1} R_{\nu,\beta}(x)^{\frac{1}{\beta}+1} \geq C^{-1} \delta(x)^{-1-\beta}.
\end{equation}
On the other hand, $ |\nabla D| \leq C_1^{-1}$ by definition of $Z$ hence, by \eqref{e:diffD} again,
\begin{equation}\label{3.28}
|\nabla R(x)| \leq C C_1^{-1} R(x)^{\frac{1}{\beta}+1} \leq C C_1^{-1} \delta(x)^{-1-\beta}.
\end{equation}
If we choose $C_1$ large enough, we deduce from the two that 
\begin{equation}\label{3.29}
|\nabla R_{\nu,\beta}(x) - \nabla R(x)| \geq c \delta(x)^{-1 - \beta}
\end{equation}
for some $c > 0$.
Then by \eqref{e:diffinDR} (with $j=1$), 
\begin{equation}\label{3.30}
\sum_{l \geq 0} 2^{-(\beta+j) l} \alpha(y,2^{l+6}r_0) \geq C^{-1}
\ \text{ for } y \in B(x,16\delta(x)).
\end{equation}
But we have seen earlier that the work of Tolsa \cite{To} gives a Carleson estimate on the square of
the sum (over dyadic cubes) of the left hand side of \eqref{3.30} (see the discussion right before the beginning of the proof of \eqref{e:keyalphaestimate}). This Carleson estimate implies
by Chebyshev (and the same computations using Fubini that lead from \eqref{e:tolsaalpha} to \eqref{1.5}; see the two paragraphs after \eqref{e:tolsaalpha})
that $Z$ is a Carleson set. That is, there is a constant $C \geq 0$ such that for $X \in E$ and $R > 0$, 
\begin{equation}\label{e:Carlesonset}
\int_{B(X,R) \cap Z} \frac{d\mu(x)}{\delta(x)^{n-d}} \leq C R^d.
\end{equation}
This immediately leads to a Carleson bound on $\wt F|_{Z}$, 
\begin{equation}\label{3.31}
\int_{B(X,R) \cap Z} |\wt F(x)|^2 \frac{d \mu(x)}{\delta(x)^{n-d}} 
\stackrel{\eqref{e:wFboundedonZ}}{=} \int_{B(X,R) \cap Z} C^2
\frac{d\mu(x)}{\delta(x)^{n-d}}
\leq C R^d.
\end{equation}
 This completes the proof of Corollary \ref{c:URimpliestildeUSFE}.
\end{proof}

\section{$E$ is flat when $|\nabla D|$ is constant on $\Omega$.}\label{s:flatwhenDconstant}

To prove the converse to Theorem \ref{t:URimpliesUSFE} (and later in Section \ref{s:NTLimits} to study non-tangential limits), we first prove the ``limiting result": 
 if  
$F$ vanishes, i.e., if  
$|\nabla D_{\mu, \beta}|$ is constant, then $\mu$ must be supported on a plane. 
 
More precisely, we show in this section that in this case $D_{\mu, \beta}$ is a multiple of $\delta$, $E$ is a $d$-plane and
$\mu$ is a multiple of $\H^d|_E$. 

We learned while writing the paper that a subset, $C$, of a Banach space, $X$, with the property that for every $x\in X$ there is a unique closest point $c\in C$ to $x$ is called a {\bf Chebyshev set}. Chebyshev sets are well studied (see the survey, \cite{Bor}), and it is an old theorem, attributed to Bunt, that every Chebyshev set in Euclidean space is convex (Theorem 13 in \cite{Bor}). Invoking this result would allow us end the proof of Theorem \ref{t:constantimpliesconvex} after \eqref{2.3}. However, we include the whole argument for the sake of completeness. As an aside, it is apparently an interesting open question as to whether every Chebyshev set in a Hilbert space is convex. 

\begin{thm} \label{t:constantimpliesconvex}
Let $E$ be a closed set in $\R^n$, and let $D$ be a continuous nonnegative function on $\R^n$,
which vanishes on $E$, is of class $C^1$ on $\Omega = \R^n \sm E$, and such that 
$|\nabla D|$ is positive and constant on every connected component of $\Omega$.
Then $E$ is convex. If, in addition, $|\nabla D| = 1$ on $\Omega$, then $D(x) = \dist(x, E)$ for $x\in \R^n$. 
\end{thm}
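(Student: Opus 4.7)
The plan is to prove that $E$ is a \emph{Chebyshev set} in $\R^n$ (every point has a unique nearest point in $E$) and then invoke Bunt's theorem from \cite{Bor}. I will work one connected component at a time: fix a component $U$ of $\Omega$, on which $|\nabla D| \equiv c > 0$.

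First I would establish the eikonal identity $D(x) = c\,\dist(x, E)$ for $x \in U$. The upper bound $D(x) \leq c\,\dist(x, E)$ comes from the $c$-Lipschitz property of $D$ along segments in $U$: for any nearest point $z \in E$ of $x$, the segment $[x, z]$ either lies in $U \cup \{z\}$ or first exits $U$ through another point of $E$, and in either case $D(x) = |D(x) - D(z)| \leq c|x - z|$. For the reverse inequality, I invoke Peano's existence theorem to produce an integral curve $\gamma$ of the continuous unit vector field $-\nabla D/c$ through $x$; the identity $\frac{d}{dt} D(\gamma(t)) = -c$ together with $D \geq 0$ forces $\gamma$ to exit $U$ through a point of $E$ in time exactly $T = D(x)/c$, having traveled arc length $T$. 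Hence $\dist(x, E) \leq D(x)/c$, which combined with the upper bound yields $D(x) = c\,\dist(x, E)$ and also forces $\gamma$ to be a straight segment to some $z_x \in E$.

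Reading off $\gamma$ gives the explicit formula $z_x = x - D(x)\,\nabla D(x)/c^2$, so $z_x$ is \emph{a} nearest point. To show it is \emph{the} nearest point, suppose $z' \in E$ also realizes $|x - z'| = \dist(x, E)$. For $t > 0$ small, $(1-t)x + tz' \in U$, and applying the eikonal identity gives
\[ D((1-t)x + tz') \leq c\,(1-t)|x - z'| = (1-t)D(x). \]
Taking the right derivative at $t = 0$ yields $\nabla D(x)\cdot(z' - x) \leq -D(x) = -|\nabla D(x)|\cdot|z' - x|$, the equality case of Cauchy--Schwarz. This forces $z' - x$ to be a negative scalar multiple of $\nabla D(x)$ of length $\dist(x, E)$, hence $z' = z_x$. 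Therefore $E$ is a Chebyshev set, and Bunt's theorem \cite[Theorem~13]{Bor} gives that $E$ is convex.

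Finally, if $|\nabla D| \equiv 1$ on all of $\Omega$, then $c = 1$ on every component of $\Omega$, so $D(x) = \dist(x, E)$ on $\Omega$, and the equality extends to $E$ by continuity since both sides vanish there. The main delicate step is the uniqueness of the nearest point: it requires using the $C^1$ regularity and the eikonal constraint $|\nabla D| = c$ simultaneously through the Cauchy--Schwarz equality case, and this is the only place where the hypothesis that $|\nabla D|$ is \emph{constant} (not merely bounded) is actually exploited.
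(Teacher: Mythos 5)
Your proof is correct, and it takes the shortcut that the paper explicitly acknowledges (just before the statement of Theorem~\ref{t:constantimpliesconvex}) but deliberately avoids for self-containedness. Both arguments start the same way: the eikonal identity $D = c\,\delta$ on each component, obtained via an integral curve of $-\nabla D/c$ and the $c$-Lipschitz bound along segments. The divergence comes after uniqueness of the nearest point. You establish uniqueness cleanly by differentiating $t \mapsto D((1-t)x + tz')$ at $t = 0^+$ and reading off the equality case of Cauchy--Schwarz, then stop and invoke Bunt's theorem (Theorem~13 of~\cite{Bor}) to conclude convexity; the paper instead derives the same directional rigidity by integrating $\partial_t D$ along the full segment $[y,x]$, and then keeps going to prove from scratch that the metric projection $p$ is $1$-Lipschitz on $\overline{\Omega}_0$ (its display~\eqref{2.4} through~\eqref{2.11}), which yields convexity by a connectedness argument. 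What your route buys is brevity and the elimination of a subtle point in the paper's argument: the paper's flow step ``$p(y) = p(x)$ by uniqueness of $C^0$ vector flows'' is not literally valid for merely continuous vector fields, whereas your argument needs only Peano existence (uniqueness of the nearest point is handled separately by Cauchy--Schwarz, not by flow uniqueness). What the paper's route buys is self-containedness, since it reproves the special case of Bunt's theorem that is needed rather than citing it. One small remark: in your upper-bound step the case ``the segment first exits $U$ through another point of $E$'' is in fact vacuous when $z$ is a nearest point (any earlier hit of $E$ would be strictly closer), so the argument simplifies, but nothing is lost by keeping both cases.
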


Theorem \ref{t:constantimpliesconvex} is stated as is so that we may apply it easily in the proof of Corollary \ref{t2.2}. However, the discerning reader will notice that the theorem is really the combination of two separate facts: first  that a $C^1$ function vanishing on $E$ with constant derivative on a connected component of $\R^n \backslash E$ is a constant multiple of $\mathrm{dist}(x,E)$ on that component. Second, the fact, mentioned in the introduction, that if $\dist(x, E)$ is $C^1$ in $\R^n \backslash E$, then $E$ is convex.

Note that the function $D$ in Theorem \ref{t:constantimpliesconvex} is not necessarily of the form $D_{\mu, \alpha}$ defined in \eqref{1.1}. However, we will eventually apply the theorem 
to exactly those functions. 

\begin{proof}
We start with the assumption that $|\nabla D| = 1$ on some connected component $\Omega_0$ of 
$\Omega$. Observe first that $D(x) > 0$ on $\Omega_0$, because of our assumption that 
$|\nabla D| \neq 0$ (and that $D \geq 0$).
We may of course assume that $\Omega_0 \neq \emptyset$. In our main case, when $E$ is Ahlfors regular of dimension $d < n-1$, $\Omega \equiv \R^n \backslash E$ is connected and dense in $\R^n$, so $\Omega_0 = \Omega$.

Set $v(x) = \nabla D(x)$ on $\Omega_0$; this is a $C^0$ vector field that does not vanish, and we can use it to 
define a flow. That is, given $x \in \Omega_0$, we can define $\varphi(x,\cdot)$ to be the solution of the equation
$\frac{\d \varphi(x,t)}{dt} = -v(\varphi(x,t))$ such that $\varphi(x,0)=x$, which is defined on a maximal 
(open) interval $I(x)$. 
By the chain rule (and $|\nabla D| \equiv 1$), $\partial_t D(\varphi(x,t)) = -1$ for $t \in I(x)$. Integrating this,  we note that $D(\varphi(x,t)) = D(x) - t$
for $t \in I(x)$.

This solution can be extended as long as $\varphi(x,t)$ stays in $\Omega$ 
(or equivalently $\Omega_0$), which means at least as long as $D(\varphi(x,t)) > 0$. 
So $I$ contains $[0,D(x))$, and $\lim_{t \uparrow D(x)} D(\varphi(x,t)) = 0 \Rightarrow \varphi(x,D(x)) \in E$. To be precise; while the flow cannot be extended naturally to time $t= D(x)$, the limit $p(x) \equiv \lim_{t\uparrow D(x)} \varphi(x,t)$ exists and $p(x) \in E$.

Since $\delta$ and $\varphi(x,\cdot)$ are $1$-Lipschitz,
$t\mapsto \delta(\varphi(x,t))$ is $1$-Lipschitz, and 
\begin{equation}\label{2.1}
\delta(x) = \delta(\varphi(x,0)) \leq D(x) +\lim_{t\uparrow D(x)} \delta(\varphi(x,t)) = D(x).
\end{equation}

On the other hand, if $p_{\delta(x)}$ is a point of $E$ that minimizes the distance to $x$,
then the bound on the gradient of $D$ implies that $$D(x) = |D(p_{\delta(x)}) - D(x)| \leq |p_{\delta(x)}-x| = \delta(x),$$ where the first equality follows from the continuity of $D$ at $p_{\delta(x)}$.
That is, not only did we prove that
\begin{equation}\label{2.2}
\delta(x) = D(x) \ \text{ for } x\in \Omega_0,
\end{equation}
but we also learned that the flow follows straight lines. More precisely, setting \begin{equation}\label{e:defofGamma} \Gamma_x = \big\{ \varphi(x,t) ; 0 \leq t < D(x) \big\},\end{equation} we know that 
the length of $\Gamma_x$ is $D(x) = \delta(x)$, and since $|p(x)-x| \geq \delta(x)$ by definition
of $\delta(x)$, the fact that $\Gamma_x$ goes from $x$ to $p(x)$ and has a length $D(x)$
implies that it is the line segment $[x,p(x))$. 

Let us pause to point out that we have already proven the second conclusion of the theorem; 
that if $|\nabla D(x)| \equiv 1$ then $D(x) = \dist(x,E)$. To prove the first part of the theorem 
it will suffice to show that 
\begin{equation}\label{2.4}
p \ \text{ is $1$-Lipschitz on}\ \overline{\Omega}_0. 
\end{equation}
 Indeed, if we know \eqref{2.4}, 
  let us 
 assume for the sake of contradiction that $E$ is not convex. 
 In particular, there are points $a, b \in E$ be given, with $b \neq a$ such that there exists 
 an $x \in (a,b)$ (the open line segment between $a$ and $b$) that does not lie in $E$. 
 Let $\Omega_0$ denote the connected component of $\Omega$
that contains $x$, and define $p$ on $\ol\Omega_0$, as above 
(if $|\nabla D| = c  \neq 1$ on $\Omega_0$ we can always consider $D/c$ without losing generality). Denote by $I_0$ the connected component of
 $(a,b) \cap \Omega$ that contains $x$;  
this is an interval $(a',b') \subset (a,b)$, which is contained in $\Omega_0$
(because it is connected and contained in $\Omega$), and $a', b' \in E$. 
By \eqref{2.4} the length of the arc $p(I_0)$ is at most $|b'-a'|$, and since $p$ is the identity 
on $E$ (this follows from $|p(x) - x|= \delta(x)$ and $p$ continuous on $\overline{\Omega}_0$) we get that $p(I_0) = (a',b')$. In particular, $x\in p(I_0) \subset E$, 
which is a contradiction.

For the remainder of the proof we study $p$, aiming towards \eqref{2.4}. We first check that $p(x)$ is the unique closest point in $E$ to $x$. 

Observe that $\nabla D(x) = - \frac{p(x)-x}{|p(x)-x|}$ for $x\in \Omega_0$.
This is, for instance, because $\Gamma_x$ has a tangent at $x$ that points in the direction of 
$-v(x) = - \nabla D(x)$. But $\Gamma_x$ also points in the direction of $p(x)-x$, since
$\Gamma_x = [x,p(x))$. Let us deduce from this that
\begin{equation}\label{2.3}
 |y-x| > \delta(x) 
\ \text{ for } y \in E \sm \{ p(x) \},
\end{equation}
i.e., that $p(x)$ is the only point of $E$ that realizes the distance to $x$. Indeed, let $y \in E$
be such that $|y-x| = \delta(x)$, and observe that along $[y,x]$ the function $D(\xi)$
goes from $0$ to $\delta(x)$; since $D(x)$ is $1$-Lipschitz and the length of the segment is $\delta(x)$,
integrating $\partial_t D((1-t)y + tx)$ from $t= 0$ to $t= 1$ implies that  $\langle \nabla D(\xi), \frac{y-x}{|y-x|} \rangle = -1$, for all $\xi \in [x,y)$. Therefore,  $\frac{y-x}{|y-x|}$ also points in the direction of $-v(x)$. 
We conclude that $y-x$ and $p(x)-x$ are two vectors which point in the same direction 
and have the same length, hence $y=p(x)$.  
The claim, \eqref{2.3}, follows. 

Let us extend $p$ to $\overline{\Omega}_0$ by setting $p(x)=x$ when $x\in E$. We claim that $p$ is continuous
on $\overline{\Omega}_0$.
Indeed, if $\{ x_k \}$ in $\ol\Omega_0$ converges to $x$, the sequence $\{ p(x_k) \}$ is bounded
(because $|p(x_k)-x_k| = \delta(x_k)$), and it is easy to see that any point of accumulation, $y$, of
this sequence is such that $|y-x| = \lim_{k \to +\infty} |p(x_k)-x_k| = \delta(x)$, hence is equal to $p(x)$.
That is, $\{ p(x_k) \}$ converges to $p(x)$, as needed for the continuity of $p$. 

To prove the higher regularity of $p$, we start by showing that if $L_+(p(x), x)$ is the closed half line that starts from $p(x)$ 
and contains $x$, then
\begin{equation}\label{2.5}
L_+(p(x), x) \subset \Omega_0 \cup \{ p(x) \}\ \text{ and } \ 
p(y) = p(x) \ \text{ for $y \in L_+(p(x), x)$.}
\end{equation}

To prove the first part of \eqref{2.5}, first note that the half open segment $[p(x), x)$ cannot contain a point in $E$ (other than $p(x)$), otherwise that point would be closer to $x$ than $p(x)$ is. Later in this argument we will show that the rest of $L_+(p(x), x)$ also cannot contain a point in $E$, which will complete the proof that $L_+(p(x), x) \subset \Omega_0 \cup \{ p(x) \}$. 

Note that if $y \in [p(x), x]$, then $p(y) = p(x)$ is immediate by the uniqueness of $C^0$ vector flows; that is, $y = \varphi(x, t)$ for some $t \in (0, D(x)]$ and therefore $\varphi(y, s) = \varphi(x, t+s)$ for all $s \in [0, D(x)-t] = [0, D(y)]$. But as we've seen above the point where the flow starting at $y$ hits $E$ is, by definition, $p(y)$. This implies that $p(y) = \varphi(y, D(y))= \varphi(x, D(y) + t) = \varphi(x, D(x)) = p(x)$. 

 To prove \eqref{2.5} for $y  \in L_+(p(x), x) \backslash [p(x), x]$ we must reverse the flow. For $x\in \Omega_0$, we define $\varphi_+(x,\cdot)$ to be reverse flow of $\varphi$; that is  $\frac{\d \varphi_+(x,t)}{dt} = v(\varphi_+(x,t))$, with the initial value 
$\varphi_+(x,0)=x$. This function is defined on an interval $I_+ \subset [0,+\infty)$, 
and since we can check as we did for $\varphi$ above that $D(\varphi_+(x,t)) = D(x) +  t \geq D(x) > 0$ 
for $t\in I(x)$, and that we can  extend the solution as long as $\varphi_+(x,t) \in \Omega_0$, it follows that $I(x) = [0,+\infty)$. 

Let $x\in \Omega_0$  and $t_0 > 0$ be given, and set $y = \varphi_+(x,t_0)$ (note $D(y) = D(x) + t_0 > 0$ so $y\notin E$). 
Notice that $\varphi_+(x,t_0-t) = \varphi(y,t)$ for $0 \leq t \leq t_0$, 
because $\varphi_+, \varphi$ 
 come from reverse flows.   
This implies that $x \in \Gamma_{y}$ (recall the notation from \eqref{e:defofGamma}).  But we know that $\Gamma_{y}$ is a straight line from $y$ to $p(y)$. From this, $p(y) = p(x)$ immediately follows; indeed, if $p(y) \in (p(x), x]$ then $\delta(x) \leq |p(y) - x| < |p(x) - x|$ a contradiction. Similarly if $p(x) \in (p(y), y]$, thus the second part of \eqref{2.5} follows. Note we have also shown that the whole ray $L_+(p(x), x)\backslash [p(x), x]$ is contained in the image of the flow of $t\mapsto \varphi_+(x,t)$. Since $D(\varphi_+(x,t)) \geq D(x) > 0$ this image is contained in $\Omega$, which finishes the proof that $L_+(p(x), x) \subset \Omega_0 \cup \{ p(x) \}$. 

For $x\in \Omega_0$, denote by $P(x)$ the hyperplane through $p(x)$ 
which is orthogonal to $x-p(x)$. Then let $H(x)$  denote 
the half space on the other side of $P(x)$. That is, set
\begin{equation}\label{2.6}
H(x) = \big\{ z\in \R^n \, ; \, \langle z, x-p(x) \rangle \leq \langle p(x), x-p(x) \rangle \big\}.
\end{equation}
We claim that $E \subset H(x)$. To check this, we may assume that $p(x)=0$ and $x = \lambda e_n$, where $e_n$ is the last element
of the canonical basis and $\lambda > 0$. 
By the discussion above, $p(t e_n) = 0$ for every $t > 0$, and this means that 
$t = \dist(te_n,0) \leq \dist(te_n,z)$ for every $z\in E$. Write $z = ae_n + v$, with $v \perp e_n$;
then $\dist(te_n,z)^2 = |(t-a)e_n -v|^2 = (t-a)^2 + |v|^2$ and we get that 
$t^2 \leq (t-a)^2 +  |v|^2$. We let $t$ tend to $+\infty$ and get that $a \leq 0$, hence 
$\langle z, x-p(x) \rangle = \langle z, x \rangle = a \lambda \leq 0 = \langle p(x), x-p(x) \rangle$,
which means that $z\in H(x)$, as needed.

We now turn to \eqref{2.4}. Let $x, y \in \ol\Omega_0$ be given; we want to prove that 
\begin{equation}\label{2.7}
|p(x)-p(y)| \leq |x-y|.
\end{equation}
Without loss of generality, we may assume that $p(x)=0$ and $x=\lambda e_n$ for some $\lambda \geq 0$.
We have two inequalities that we can use, the fact that $p(y) \in H(x)$ (because $p(y) \in E$), which says that
\begin{equation}\label{2.8}
\langle p(y), x \rangle \leq 0,
\end{equation}
and similarly the fact that $0 = p(x) \in H(y)$, i.e., $0 = \langle p(x), y-p(y) \rangle \leq \langle p(y), y-p(y) \rangle$, or equivalently
\begin{equation}\label{2.9}
\langle p(y),y \rangle \geq |p(y)|^2.
\end{equation}
Write $y = \mu e_n + y_0$ for some $y_0 \in e_n^\perp$, and first assume that $\mu \leq 0$.
Replacing $x$ with $0$ diminishes $|x-y|$ but does not change $|p(x)-p(y)|$; thus it is enough to prove
\eqref{2.7} for $x=0$. That is, we just need to show that $|p(y)| \leq |y|$, which follows from \eqref{2.9}
and Cauchy-Schwarz.

So we may assume that $\mu > 0$. Replacing $x$ with $\mu e_n$ diminishes $|x-y|$ but does not change 
$|p(x)-p(y)|$, so as before we may assume that $\lambda = \mu$. That is, $y = \lambda e_n + y_0$.
Now write $p(y) = ae_n + by_0+z$, with $a, b \in \R$ and $z \in e_n^\perp \cap y_0^\perp$. Then
$a \leq 0$ by \eqref{2.8} and because $\lambda > 0$, \eqref{2.9} yields
\begin{equation}\label{2.10}
\lambda a + b |y_0|^2 =\langle y, p(y) \rangle \geq |p(y)|^2  = a^2 + b^2 |y_0|^2 + |z|^2.
\end{equation}
Now
\begin{equation}\label{2.11}
|p(x)-p(y)|^2 = |p(y)|^2 \leq \lambda a + b |y_0|^2
\end{equation}
and for \eqref{2.7} we just need to know that $\lambda a + b |y_0|^2 \leq |x-y|^2 = |y_0|^2$.
Since $a \leq 0$, we just need to  check that $b \leq 1$ or $y_0=0$. We return to \eqref{2.10}, which says that
\begin{equation}\label{2.11}
b(b-1)|y_0|^2 \leq \lambda a - a^2 - |z|^2.
\end{equation}
The right-hand side is non positive, so $y_0 = 0$ or else $b(b-1) \leq 0$; this 
 last case
is impossible if $b > 1$, so finally \eqref{2.7} holds and $p$ is $1$-Lipschitz.
 \end{proof}

Notice that there is a (less interesting) converse. If $E$ is convex and $D(x) = \dist(x,E)$, then $D$ is $1$-Lipschitz,
the point $p(x) \in E$ such that $|x-p(x)| = D(x)$ is unique, and it is not so hard to check that 
$\nabla D(x) = - \frac{p(x)-x}{|p(x)-x|}$ and so $|\nabla D(x)| \equiv 1$.

We now 
apply Theorem \ref{t:constantimpliesconvex} to the situation where $D = D_{\mu, \alpha}$ 
 is defined  by \eqref{1.1}. 

\begin{cor} \label{t2.2}
Let $0 < d < n$ and let $\mu$ be a $d$-dimensional Ahlfors regular measure supported on the closed set 
$E \subset \R^n$.
Suppose that for some $\alpha > 0$, the function $D_{\mu, \alpha}$ defined by 
\eqref{1.1} and \eqref{1.3} is such that on $\Omega = \R^n \sm E$, $|\nabla D_{\mu, \alpha}|$ 
is locally constant and positive. 
Then $d$ is an integer, $E$ is a $d$-plane, and the density of $\mu$ with respect to 
$\H^d_{\vert E}$ is constant. If $d< n-1$, there is a constant $c> 0$ such that 
$D_{\mu, \alpha}(x) = c \dist(x,E)$ for $x\in \Omega$.
\end{cor}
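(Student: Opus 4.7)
The plan is to first reduce the corollary to Theorem \ref{t:constantimpliesconvex}, and then to exploit real analyticity of the integrand defining $R_{\mu,\alpha}$ to rule out the possibility that $E$ is a proper convex subset of its affine hull. To apply Theorem \ref{t:constantimpliesconvex}, note that $D_{\mu,\alpha}(x) \asymp \delta(x)$ by \eqref{1.2}, so $D_{\mu,\alpha}$ extends continuously by $0$ to $\R^n$; it is $C^1$ (in fact $C^\infty$) on $\Omega$ by \eqref{e:diffR}--\eqref{e:diffD}; and $|\nabla D_{\mu,\alpha}|$ is positive and locally constant by hypothesis. Theorem \ref{t:constantimpliesconvex} then gives that $E$ is convex, and inspection of its proof (applied to $D_{\mu,\alpha}/c$ on each connected component of $\Omega$, where $c$ is the value of $|\nabla D_{\mu,\alpha}|$ there) gives $D_{\mu,\alpha}(x) = c\,\dist(x,E)$, and hence $R_{\mu,\alpha}(x) = c^{-\alpha}\,\dist(x,E)^{-\alpha}$, on that component. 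When $d < n-1$, $\Omega$ is connected, which gives the last assertion of the corollary.

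Because $E$ is closed, convex, and carries the $d$-Ahlfors regular measure $\mu$, its affine hull $V$ must have dimension exactly $d$: Ahlfors regularity gives $\dim_{\H}E = d \leq \dim V$, while the convex set $E$ has nonempty relative interior in $V$ and therefore contains a $(\dim V)$-dimensional ball, forcing $\dim V \leq d$. Thus $d$ is a positive integer and $E \subset V$ for some affine $d$-plane $V$.

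The main difficulty is to upgrade this to $E = V$. For this, set
\[
g_t(x_0) = \int_E \bigl(|x_0 - y|^2 + t^2\bigr)^{-(d+\alpha)/2}\, d\mu(y), \qquad t > 0,\ x_0 \in \R^n,
\]
and pick any unit vector $e \perp V$ (which exists since $d < n$). For $x_0 \in E$, the point $x_0 + te$ lies in a component of $\Omega$ and $\dist(x_0 + te, E) = t$, so the formula from the first step yields $g_t(x_0) = c^{-\alpha}t^{-\alpha}$ for every $x_0 \in E$. Because $t > 0$ keeps the integrand smooth, the standard bound $|\partial_{x_0}^\beta (|x_0 - y|^2 + t^2)^{-(d+\alpha)/2}| \lesssim |\beta|!\, t^{-|\beta|}(|x_0-y|^2+t^2)^{-(d+\alpha)/2}$, integrable against the $d$-Ahlfors regular $\mu$, makes $g_t$ real analytic in $x_0 \in \R^n$. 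Its restriction to the connected plane $V$ is then real analytic and equals the constant $c^{-\alpha}t^{-\alpha}$ on $E$, which has nonempty relative interior in $V$; by the identity principle, $g_t \equiv c^{-\alpha}t^{-\alpha}$ on all of $V$. If some $x_0 \in V \setminus E$ existed, then as $t \to 0^+$ dominated convergence would give $g_t(x_0) \to R_{\mu,\alpha}(x_0) < \infty$ (since $x_0 \notin E$) while $c^{-\alpha}t^{-\alpha}\to \infty$, a contradiction. Hence $E = V$.

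Once $E = V$, Ahlfors regularity forces $\mu = f\,\H^d|_V$ with $C^{-1} \leq f \leq C$. Decomposing $x = x_\parallel + z$ with $x_\parallel \in V$ and $z \in V^\perp \setminus \{0\}$, and changing variables $y = x_\parallel + |z|u$ in the defining integral, the identity $R_{\mu,\alpha}(x) = c^{-\alpha}|z|^{-\alpha}$ rewrites as
\[
\int_V f(x_\parallel + |z|u)\,(1+|u|^2)^{-(d+\alpha)/2}\,du = c^{-\alpha}
\]
for every $x_\parallel \in V$ and every $|z| > 0$. Since the $|z|$-dilations of the radial $L^1$ kernel $K(u) = (1+|u|^2)^{-(d+\alpha)/2}$ form an approximate identity on $V$, sending $|z| \to 0^+$ and applying Lebesgue differentiation at Lebesgue points of $f$ forces $f$ to equal $c^{-\alpha}/\int_V K$ almost everywhere, completing the proof.
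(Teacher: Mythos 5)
Your argument is correct, and it diverges from the paper's proof in the two substantive steps, after the common reduction to Theorem \ref{t:constantimpliesconvex} and the common (easy) argument that $d$ is an integer and $E$ lies in a $d$-plane $V$. Where the paper shows $E=V$ by contradiction via a $C^2$-regularity argument — taking a relative boundary point $Q$ of $E$ in $V$, building the cone $C$ of directions pointing into $E$, choosing $e_1$ in the interior of $C$ and $e_2\perp V$, and showing by an explicit two-sided estimate on $\delta(e_2\pm\epsilon e_1)$ that $\delta$ cannot be $C^2$ at $e_2$ — you instead observe that $g_t(x_0)=\int_E(|x_0-y|^2+t^2)^{-(d+\alpha)/2}d\mu(y)$ extends holomorphically in $x_0$ to the tube $|\mathrm{Im}\,x_0|<t$ (so is real analytic), equals the constant $c^{-\alpha}t^{-\alpha}$ on the nonempty relative interior of $E$, hence by the identity principle on all of $V$; letting $t\downarrow 0$ at any $x_0\in V\setminus E$ makes the left side finite and the right side blow up. And where the paper identifies the density of $\mu$ by passing to tangent measures (dilating $\mu$ at a Lebesgue density point, using weak convergence and the explicit formula \eqref{e:Dforflat} for flat measures), you change variables in the defining integral to rewrite the identity $R_{\mu,\alpha}=c^{-\alpha}\dist(\cdot,V)^{-\alpha}$ as a convolution of $f=d\mu/d\H^d|_V$ against the radially decreasing $L^1$ kernel $(1+|u|^2)^{-(d+\alpha)/2}$, and apply Lebesgue differentiation. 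Both of your replacements avoid the compactness/blow-up machinery of the paper's proof and exploit the explicit, analytic structure of $R_{\mu,\alpha}$ directly; they are if anything more elementary, though they do not obviously generalize to the less explicit functions that appear elsewhere in the paper. One small point worth flagging: when $d=n-1$, the constant $c$ of Theorem \ref{t:constantimpliesconvex} is only locally constant, and once $E=V$ is established the set $\Omega$ has two components with a priori different constants $c_\pm$; your final approximate-identity computation should be read as being applied on each side separately, after which the fact that $f$ is the \emph{same} function on $V$ forces $c_+=c_-$. This is implicit in your write-up but should be said explicitly, as the paper does.
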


\begin{proof}
Let $\mu$, $E$, and $\alpha$ satisfy the assumptions and let $D = D_{\mu, \alpha}$. We observed earlier that $D$ is smooth
on $\Omega$, and by \eqref{1.2} it is equivalent to $\delta$ on $\Omega$, hence has a continuous extension to
$\R^n$ such that $D(x) = 0$ on $E$. Then on each connected component of $\Omega$ there is a constant $c > 0$ such that $c^{-1} D$ satisfies the assumptions of 
Theorem \ref{t:constantimpliesconvex}, hence $E$ is convex and $D$ is a constant multiple of $\delta$ on each of the connected components of $\Omega$ (at the end of this proof we show that the constant multiple must be the same on each component). 

Next we check the geometric fact that if $d < n$ and $E$ is a convex Ahlfors regular set of dimension $d$, then $d$ is an integer and 
$E$ is a subset of an affine $d$-space.

Denote by $m$ the smallest integer greater than or equal to to $d$. 
That is, $m=d$ if $d$ is an integer, and $m = [d]+1$ otherwise. 
First we check that $m=d$ (and $d$ is an integer).
Suppose that $0 \in E$. It is easy to find $m + 1$ 
 independent points in $E$, 
i.e., points $x_0, \cdots x_m \in E$ that are not contained in any $(m-1)$-plane (a 
$d$-Ahlfors regular set cannot be a subset of a $(m-1)$-dimensional plane since $m-1 < d$).

Since $E$ is convex, it contains the convex hull of the $m + 1$ points above, 
and in particular it contains an $m$-disk $\Delta$. This forces $d \geq m$, and hence $d=m$. 
In addition, let $P$ denote the affine $d$-plane that contains $\Delta$; 
notice that $E \subset P$, because otherwise $E$ contains a $(d+1)$-disk (by convexity again), and cannot be Ahlfors regular of dimension $d$.

We want to show now that $E$ is all of $P$. 
We will show a slightly more general statement, that if $d < n$ and $E$ is a convex, 
$d$-Ahlfors regular subset of a $d$-plane $P \subset \mathbb R^n$, and if 
$\delta(x) = \dist(x, E)$ is of class $C^2$ on all of $\mathbb R^n\backslash P$,
then $E = P$. Since in the present situation $D = c\delta$ and 
$D \in C^\infty(\mathbb R^n \backslash E)$, we will conclude that $E=P$.

To see this, assume  that $E \neq P$ and, without loss of generality, 
that $0$ is a boundary point of $E$, considered as a subset of $P$.
 That is, $0 \in E$ (because $E$ is closed) and every ball around $0$ contains a point in  $P\backslash E$.
Let $C$ be the set of points $e$ such that $\lambda e \in E$ for some $\lambda > 0$.
Since $E$ is convex and contains $0$,
$C$ is also the set of points $e$ such that $\lambda e \in E$ for $\lambda > 0$ small,
and then $C$ is a convex cone. Next let $e_1$ lie in the interior of $C$; such a point exists
because $E$ contains an $m$-disk $\Delta$ (as above), and $-e_1 \notin \overline{C}$ because otherwise
$0$ would be an interior point of $E$ (note if $-e_1\in \partial C$ then we can peturb $e_1$ slightly to $e_2 \in C$ such that $-e_2 \in C$, which still gives a contradiction).

 Let $e_2$ be a unit direction which is normal to $P$; we claim that $\delta$ is not $C^2$ in the direction $e_1$ at the point $e_2$.  For small $\epsilon > 0$, we have $\epsilon e_1 \in E$; this 
  is the definition of $C$.  Thus $\delta(\epsilon e_1 + e_2) = 1$ for all small enough positive $\epsilon > 0$, and consequently, $\partial_{e_1} \delta(\epsilon e_1 + e_2) = \partial^2_{e_1e_1}\delta(\epsilon e_1 +e_2) = 0$. Other the other hand, the fact that $-e_1 \notin \overline{C}$ implies that there exists some $\theta > 0$ such that $e_1 \cdot x > - (1-\theta)\|x\|$ for all $x \in E$ ($\theta$ depends on the distance between $-e_1$ and $\overline{C}$). Let $x_\epsilon$ be the closest point in $E$ to the point $e_2 - \epsilon e_1$; 
  then 
 $$\delta^2(e_2 - \epsilon e_1) \equiv \|x_\epsilon - e_2 + \epsilon e_1\|^2  
 =  1+ \epsilon^2 + \|x_\epsilon\|^2 +2\epsilon \left\langle e_1, x_\epsilon\right\rangle 
 \geq 1+ (\|x_\epsilon\| - \epsilon)^2 +2\epsilon\theta \|x_\epsilon\|.
 $$ 
 After analyzing two cases, depending on the relative size of $\|x_\epsilon\|$ and
 $\varepsilon/2$, we find that 
 $$
 \delta(e_2 - \epsilon e_1) \geq 1 + c\epsilon^2. 
 $$

Let $M_\epsilon = \sup_{t\in [0, \epsilon]} |\partial_{e_1e_1}^2\delta(e_2 - te_1)|$. 
If we assume that $\partial_{e_1} \delta$ is continuous at $e_2$ (i.e. that $\partial_{e_1}\delta(e_2) = 0$) then by the Taylor remainder theorem: $$ 1+ c\epsilon^2  \leq \delta(e_2 - \epsilon e_1) \leq 1 + M_\epsilon \epsilon^2.$$ This implies that $\lim_{\epsilon \downarrow 0} M_\epsilon > c$ which in turn implies that $\delta$ is not $C^2$ at the point $e_2$. This contradicts the initial assumption that $E \neq P$. 

We are left to prove the final claim; that $\mu$ must be a constant times $\mathcal H^d|_{P}$. Assume without losing generality that $0\in P$ is a point of density for $\mu$, 
with density $c_0 > 0$ (clearly everything is invariant under translation, but $c_0$ may depend on the point $0 \in P$).  
For $r_k \downarrow 0$ define the measure $\mu_k$ supported on $P$ by 
$\mu_k(S) \equiv \frac{\mu(r_kS)}{r_k^d}$. 
Note that $\mu_k$ is still a $d$-Ahlfors regular measure supported on $P$. 
It is then easy to see that $\mu_k \rightharpoonup c_0 \mathcal H^{d}|_{P}$ weakly as measures. By changing coordinates, $y= r_k z$, it is also clear that 
$$
D_{\mu, \alpha}(r_k x) = \left(\int_{P} \frac{d\mu(y)}{|r_kx - y|^{d+\alpha}}\right)^{-1/\alpha} 
= \left(\frac{1}{r_k^\alpha}\int_{P} \frac{d\mu(r_kz)}{r_k^d |x - z|^{d+\alpha}}\right)^{-1/\alpha} = r_k D_{\mu_k, \alpha}(x), \forall x \in \R^n\backslash E. 
$$ 
Since $D_{\mu,\alpha}(r_k x) = c\delta(r_k x) = cr_k\delta(x)$ for some $c > 0$ (which may depend on the component of $\Omega$ containing $x$) it follows that $D_{\mu_k,\alpha}(x) = c\delta(x)$. Letting $k\rightarrow \infty$ and using that $\mu_k \rightharpoonup c_0\mathcal H^d|_{P}$ we get that $D_{c_0\mathcal H^d|_P,\alpha}(x) = c\delta(x)$. However, by \eqref{e:Dforflat} for each $c > 0$ there is only one $\overline{c}> 0$ for which $D_{\overline{c}\mathcal H^d|_P,\alpha}(x) = c\delta(x)$. This implies that $c_0 = \overline{c}$, i.e. that the density of $\mu$ with respect to $\mathcal H^{d}|_P$ is the same at all points of density in $P$. In addition, $\mu$ is independent of the connected component of $\Omega$ that contains $x$ and thus the constant $c$ is the same for all connected components of $\Omega$ Therefore, $\mu = \overline{c}\mathcal H^{d}|_P$ and we are done. 
\end{proof}

Readers familiar with the concept of tangent measure will note that we essentially analyzed the tangent measures of $\mu$ at $x_0$ to obtain that $\mu$ has constant density 
with respect 
to $\mathcal H^d|_P$. This analysis was particularly easy in the case above, i.e. when $\mu$ is an Ahlfors regular measure whose support is a $d$-plane. Later, in Section \ref{s:NTLimits}, we will need to understand the behavior of $D_{\mu, \alpha}(x)$ as $x \rightarrow E$ for more complicated sets $E$. In that section we will treat the concepts of tangent measure and blowup with more care and comprehensiveness.

\section{A weak USFE implies the uniform rectifiability of $E$}
\label{s:wUSFEimpliesUR}

In this section we use ``endpoint result" of Section \ref{s:flatwhenDconstant} to prove a ({\it a priori} slightly stronger) converse to Theorem \ref{t:URimpliesUSFE}
and Corollary \ref{c:URimpliestildeUSFE}. Let us note that throughout this section $d$ is not assumed to be an integer (but will be forced to be so {\it a posteriori}). 

\begin{thm} \label{t:weakUSFEimpliesUR}
Let $n \geq 1$ be an integer, and let $0 < d < n$ be given. 
Let $\mu$ be a $d$-dimensional Ahlfors regular measure supported on the closed set $E \subset \R^n$.
Let $\alpha > 0$ be given, define $R = R_{\mu,\alpha}$, $D = D_{\mu, \alpha}$, $F = F_{\mu, \alpha}$, and $\wt F = \wt F_{\mu,\alpha}$ by \eqref{1.1}, \eqref{1.3}, \eqref{1.4},
and \eqref{3.25}. For $\varepsilon > 0$, set
\begin{equation}\label{4.1}
Z(\varepsilon) = \big\{ x\in \Omega \, ; \, F(x) > \varepsilon \big\}
\ \text{ and } \ \wt Z(\varepsilon) = \big\{ x\in \Omega \, ; \, \wt F(x) > \varepsilon \big\}.
\end{equation}
If for every $\varepsilon > 0$ $Z(\varepsilon)$ or $\wt Z(\varepsilon)$ is a Carleson set (see Definition \ref{d:CarlesonMeasurecompliment}), then $d$ is an integer
and $E$ is uniformly rectifiable.
\end{thm}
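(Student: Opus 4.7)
My plan is a contradiction / blow-up argument with Corollary~\ref{t2.2} as the rigidity engine. Assume, for contradiction, that either $d$ is not an integer, or $d$ is an integer and $E$ is not uniformly rectifiable. In either case, the $\alpha$-numbers of $(E,\mu)$ fail the Carleson packing condition \eqref{3.23}: in the first case trivially, since no $d$-Ahlfors regular flat measure exists and hence $\alpha(x,r)$ is bounded below by some $c>0$; in the second case by Tolsa's characterisation of uniform rectifiability. This supplies $\eta>0$ and sequences $x_k\in E$, $r_k>0$ witnessing the failure. Consider the rescaled Ahlfors regular measures $\mu_k(S):=r_k^{-d}\mu(x_k+r_kS)$; passing to a weakly convergent subsequence one has $\mu_k\rightharpoonup\mu_\infty$, a $d$-Ahlfors regular measure supported on a closed set $E_\infty$. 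Lower semicontinuity of the $\alpha$-numbers under weak convergence allows one to arrange $\alpha_{\mu_\infty}(0,1)\ge \eta/2>0$, so $\mu_\infty$ is \emph{not} a flat measure and $E_\infty$ is not an affine $d$-plane.

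Next I propagate the hypothesis into the blow-up. The integral formulas \eqref{e:diffR}--\eqref{e:diffnormgrad} show that weak convergence $\mu_k\rightharpoonup\mu_\infty$ upgrades to $C^2_{\mathrm{loc}}(\Omega_\infty)$-convergence of $D_{\mu_k,\alpha}$ to $D_{\mu_\infty,\alpha}$ on $\Omega_\infty:=\R^n\setminus E_\infty$, and consequently to locally uniform convergence of $F_{\mu_k,\alpha}$ to $F_{\mu_\infty,\alpha}$. Because the Carleson condition of Definition~\ref{d:CarlesonMeasurecompliment} is scale-invariant and $F$ is dimensionless, each $Z_{\mu_k}(\varepsilon)$ is a Carleson set with the same constant $C(\varepsilon)$.

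The crucial step is to promote this inherited, scale-invariant Carleson information into the pointwise statement $F_{\mu_\infty,\alpha}\equiv 0$ on all of $\Omega_\infty$. My plan is to make a more careful initial choice of centers: fix a countable dense sequence of thresholds $\varepsilon_j\downarrow 0$ and, via a diagonal / maximal-function argument applied to the Carleson measures $\mathbf{1}_{Z(\varepsilon_j)}\delta^{d-n}\,dx$, select $(x_k,r_k)$ that simultaneously witness the $\alpha$-non-flatness above and satisfy $r_k^{-d}\int_{B(x_k,r_k)\cap Z(\varepsilon_j)}\delta^{d-n}\,dx\to 0$ for every $j$. Passing to the limit yields $\{F_{\mu_\infty,\alpha}>\varepsilon_j\}$ of Lebesgue measure zero for each $j$, and continuity of $F_{\mu_\infty,\alpha}$ on $\Omega_\infty$ then gives $F_{\mu_\infty,\alpha}\equiv 0$. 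Corollary~\ref{t2.2} now applies and forces $\mu_\infty$ to be a flat measure --- contradicting the non-flatness produced above. This closes the contradiction, and we conclude that $d$ is an integer and $E$ is uniformly rectifiable. The $\wt F$ version is handled identically, using the inclusion $Z(\varepsilon)\subset\wt Z(c\varepsilon)$ coming from $F\lesssim\wt F\cdot|\nabla D|$ together with the boundedness of $|\nabla D|$, and on the ``degenerate'' region $\{|\nabla D|\leq C_1^{-1}\}$ the Carleson bound \eqref{e:Carlesonset} combined with the identity $\wt F=\tfrac12 F|\nabla D|^{-1}$ from \eqref{3.26} on its complement, exactly as in the proof of Corollary~\ref{c:URimpliestildeUSFE}.

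\textbf{Main obstacle.} The delicate step is passing from ``$Z(\varepsilon)$ Carleson with constant $C(\varepsilon)$ for every $\varepsilon>0$'' to ``$F_{\mu_\infty,\alpha}\equiv 0$'' in the tangent. Scale invariance alone only preserves the constants $C(\varepsilon)$, which can blow up as $\varepsilon\downarrow 0$; one must exploit the full hypothesis to choose blow-up centers that are genuinely generic for \emph{all} the $Z(\varepsilon_j)$ simultaneously (or, alternatively, iterate once to a tangent-of-a-tangent and use Preiss-type self-similarity of generic tangents of $d$-Ahlfors regular measures) so as to drive the weighted Carleson mass of $Z(\varepsilon)$ to zero in every ball of the limit.
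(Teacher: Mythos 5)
Your approach and the paper's share the same rigidity input --- Corollary~\ref{t2.2}, obtained by compactness --- but are organized very differently. The paper does not argue by blowing up at ``bad scales'' for the $\alpha$-numbers; instead it first replaces the Carleson condition on $Z(\varepsilon)$ with a Carleson condition on a \emph{Whitney-localized} bad set $\mathcal{B}_M(\eta)$ (Lemma~\ref{t4.2}), whose complement carries a pointwise bound $F\le\eta$ on a large Whitney neighborhood $W_M(x)$. A compactness argument (Lemma~\ref{t4.3}) then shows that this pointwise Whitney bound forces $E$ to be close to a $d$-plane at the scale $\delta(x)$, and this is precisely the BWGL, hence uniform rectifiability. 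The $\alpha$-numbers appear only in the direct direction (Theorem~\ref{t:URimpliesUSFE}), not here.

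The obstacle you flag is in fact a genuine gap, and the fixes you sketch do not close it. Your plan requires choosing blow-up centers $(x_k,r_k)$ for which the normalized Carleson mass $r_k^{-d}\int_{B(x_k,r_k)\cap Z(\varepsilon_j)}\delta^{d-n}\,dx$ tends to $0$ for every $j$. But the hypothesis only provides an \emph{upper bound} $\le C(\varepsilon_j)\,r^d$; a Carleson measure may have strictly positive, even constant, density on every ball (e.g.\ $c\,\delta^{d-n}dx$ for small $c$), so there is no reason a maximal-function or diagonal argument produces centers with vanishing density --- and certainly not ones that also lie in the non-Carleson set $\{\alpha\ge\eta\}$, which is exactly where you should expect $F$ to be large. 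Consequently the limit set only inherits a Carleson bound on $\{F_{\mu_\infty}>\varepsilon_j\}$, not $F_{\mu_\infty}\equiv 0$, and the appeal to Corollary~\ref{t2.2} is not available. The ``tangent-of-a-tangent'' alternative is too vague to see that it terminates. What actually makes the compactness argument close is the paper's maneuver of working with the sets $\mathcal{B}_M(\eta)$: being outside $\mathcal{B}_M(\eta)$ is a pointwise statement (small $F$ on all of $W_M(x)$) that survives blow-ups trivially, and Lemma~\ref{t4.2} shows that the weak USFE does control $\mathcal{B}_M(\eta)$ in a Carleson sense. You would need to incorporate this intermediate step, or something playing the same role, to make your blow-up argument rigorous.

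Finally, one smaller issue: stating that failure of \eqref{3.23} ``supplies $\eta>0$ and sequences $x_k,r_k$'' glosses over the fact that failure of the $\alpha^2$-Carleson estimate does not, by itself, give points where $\alpha(x_k,r_k)\ge\eta$; you need the threshold version (non-Carleson packing of $\{\alpha\ge\eta\}$ for some $\eta$, or equivalently failure of the BWGL for the $\beta_b$-numbers), which is a separate, though true, characterization of UR and should be cited.
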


 Notice that the USFE (applied to either $F$ or $\wt F$) implies the Carleson condition on $Z(\varepsilon)$, respectively $\wt Z(\varepsilon)$, 
 in the statement, by Chebyshev; thus we will refer to the condition that 
 $Z(\epsilon)$ (or $\wt Z(\epsilon)$) is a Carleson set as the {\em weak USFE}. 

As is always the case with these types of results, what we will prove is that there is a 
constant $\varepsilon_0 > 0$, that depends on $n$, $d$, $\alpha$, and the 
Ahlfors regularity constant for $\mu$, such that if $Z(\varepsilon_0)$ 
or $\wt Z(\varepsilon_0)$ is a Carleson set, then $d$ is an integer and $E$ is uniformly rectifiable. 
But this is not such a useful difference anyway, since $\varepsilon_0$ comes from a 
compactness argument and cannot be computed. However, it does mean that one should not be concerned about the constants associated to the Carleson set $Z(\varepsilon)$ potentially blowing up as $\varepsilon \downarrow 0$. 

Before beginning the proof of Theorem \ref{t:weakUSFEimpliesUR}, let us first check that it is enough to prove the theorem for $Z$.
Indeed, recall from \eqref{1.4} and \eqref{3.25} that 
$F(x) = \delta(x) \big|\nabla (|\nabla D|^2)(x)\big|$ and 
$\wt F(x) = \delta(x) \big|\nabla (|\nabla D|)(x)\big|$.
 Thus, as observed at the beginning of the proof of Corollary \ref{c:URimpliestildeUSFE}, $F \leq 2 |\nabla D| \wt F$.
By \eqref{e:diffD}, \eqref{1.2}, and \eqref{e:boundongradR} (note that this last estimate, while presented in the context of Theorem \ref{t:URimpliesUSFE} uses only the $d$-Ahlfors regularity of $\mu$),
\begin{equation}\label{e:boundongradD}
|\nabla D|(x) \leq C R(x)^{-\frac{1}{\alpha}-1} |\nabla R(x)| \leq C \delta(x)^{1+\alpha} |\nabla R(x)| \leq C.
\end{equation}
So $F(x)\leq C \wt F(x)$. If $F(x) > \varepsilon$, then $\wt F(x)>\varepsilon/C$. That is,
 $Z(\varepsilon) \subset \wt Z(\varepsilon/C)$. If $\wt Z(\varepsilon/C)$ is a Carleson set, then
 $Z(\varepsilon)$ is a Carleson set, and if we know the result for $Z$, we can deduce the uniform rectifiability from this and get the result for $\wt Z$.

To prove the result for $Z$ we will show that the weak USFE 
(i.e. the condition that $Z(\varepsilon)$ is a Carleson set) implies 
 that $d$ is an integer and $E$ satisfies 
the condition known as the Bilateral Weak Geometric Lemma (BWGL). The BWGL property, 
along with Ahlfors regularity, characterizes uniform rectifiability and so this will complete the proof. Let us quickly recall what the BWGL is; for a more comprehensive introduction to this and other characterizations of uniform rectifiability see, e.g., \cite{DS2}.

Recall the local normalized Hausdorff distances, $d_{x,r}$ defined for $x \in \R^n$ and $r > 0$ by 
\begin{equation}\label{4.8}
d_{x,r}(E,F) = \frac{1}{r}\left( \sup\big\{ \dist(y,F) \, ; \, y\in E \cap \ol B(x,r) \big\}
+  \sup\big\{ \dist(y,E) \, ; \, y\in F \cap \ol B(x,r) \big\}\right),
\end{equation} 
where $E$, $F$ are closed sets that meet $\ol B(x,r)$ (we will not need the other cases). Using this distance for integer $d > 0$ we can define a bilateral $d$-dimensional version of P. Jones' $\beta$ numbers \cite{Jones}, which we denote $\beta_b(x,r)$, as 
\begin{equation}\label{e:bilateralbetas}
\beta_b(x,r) \equiv \inf_{P} d_{x,r}(E,P)
\end{equation}
where the infimum is taken over all affine $d$-planes $P$ that meet $\ol B(x,r)$. These numbers measure, in a two-sided way, how close the set $E$ is to being flat at the point $x$ and scale $r > 0$. We can now state the BWGL:

\begin{defi}\label{d:BWGL}
Let $E \subset \R^n$ be a closed set, $d$ be a positive integer and $\beta_b(x,r)$ be defined with respect to $E$ as in \eqref{e:bilateralbetas}. Then $E$ satisfies the condition known as the Bilateral Weak Geometric Lemma (BWGL) if the set $\mathcal R(\tau)$ defined by \begin{equation}\label{4.11}
{\mathcal R}(\tau) = \big\{ (x,r) \in E \times (0,+\infty) \, ; \,  \beta_b(x,r) \geq \tau \big\},
\end{equation}
is a Carleson subset of $E\times (0, +\infty)$ for all $\tau > 0$. Recall that any $\mathcal G \subset E\times (0, +\infty)$ is a Carleson subset of $E$ if there exists a $C > 0$ such that for all $X \in \R^n$ and $R > 0$,
\begin{equation}\label{4.12}
\int_{x\in E \cap B(X,R)} \int_{r\in (0,R]} {\1}_{{\mathcal G}}(x,r) \frac{d\H^d(x) dr}{r} \leq C R^d.
\end{equation}
\end{defi}

It is proved in \cite{DS1} that if $E$ is Ahlfors regular 
 (of some integer 
dimension $d$) and satisfies the BWGL, then it is uniformly rectifiable. 
See also Theorem I.2.4 in \cite{DS2} for the statement.
In fact, it is enough to show that ${\mathcal R}(\tau)$ is a Carleson set for a single $\tau > 0$, sufficiently small depending on the dimensions and the Ahlfors regularity constant for $E$ (see \cite{DS2}, Remark II.2.5). 

To show that the BWGL holds, we will first 
replace the $Z(\epsilon)$ with other similar sets ${\mathcal B}(\eta)$,
 which also satisfy a Carleson condition when the $Z(\epsilon)$ do, and which are more amenable 
to a later compactness argument that will invoke Corollary \ref{t2.2}.

\begin{lem} \label{t4.2}
Let $n\geq 2, d < n$ and $E\subset \mathbb R^n$ supporting a $d$-Ahlfors regular measure $\mu$.  For $M \geq 1$ (a large constant,
 to be chosen later) 
and $x\in \Omega \equiv \mathbb R^n\backslash E$, define a big (Whitney) neighborhood of $x$ as
\begin{equation}\label{4.2}
W(x) = W_M(x) = \big\{ y\in \Omega \cap B(x,M\delta(x)) \, ; \, \dist(y,E) \geq M^{-1} \delta(x) \big\}.
\end{equation}
Define the bad set, 
 ${\mathcal B}(\eta) = {\mathcal B}_M(\eta)$, by
\begin{equation}\label{e:badballs}
{\mathcal B}_M(\eta) = \big\{ x\in \Omega \, ; \, F(y) \geq \eta \text{ for some } y \in W_M(x) \big\}.
\end{equation}
With this notation, if $Z(\varepsilon)$ is a Carleson set, then 
for each large enough $M$, ${\mathcal B}_M(3\varepsilon)$ is a Carleson set as well.
\end{lem}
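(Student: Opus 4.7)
The idea is to discretize via a Whitney decomposition of $\Omega$: pair each ``bad'' Whitney cube (one meeting $\mathcal B_M(3\varepsilon)$) with a nearby cube containing a witness point $y\in Z(3\varepsilon)$, and then use quantitative continuity of $F$ at the scale of $\delta$ to convert each such witness into a definite amount of $Z(\varepsilon)$-mass inside that nearby cube. The Carleson bound on $Z(\varepsilon)$ will then transfer to $\mathcal B_M(3\varepsilon)$.

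Concretely, fix a Whitney decomposition $\{Q_j\}$ of $\Omega$ with $\diam(Q_j)\sim\dist(Q_j,E)$. If $x\in Q\cap \mathcal B_M(3\varepsilon)$, by \eqref{e:badballs} there exists $y=y(Q)\in W_M(x)$ with $F(y)\geq 3\varepsilon$. The definition \eqref{4.2} of $W_M(x)$ forces $|y-x|\leq M\delta(x)$ and $M^{-1}\delta(x)\leq \delta(y)\leq (M+1)\delta(x)$, so the Whitney cube $Q'=Q'(Q)$ containing $y$ satisfies $\diam(Q')\sim_M\diam(Q)$ and $\dist(Q,Q')\lesssim_M\diam(Q)$. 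In particular the map $Q\mapsto Q'$ has multiplicity bounded by a constant $N_M$, and whenever $Q$ meets $B(X,R)$ the companion $Q'$ lies in $B(X,C_M R)$.

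The next ingredient is quantitative continuity of $F$ at the scale of $\delta$. The derivative bound \eqref{e:boundongradR}, together with \eqref{e:diffD}--\eqref{e:diffnormgrad}, yields
\[
|\nabla(|\nabla D|^2)(z)|\leq C\delta(z)^{-1},\qquad |\nabla^2(|\nabla D|^2)(z)|\leq C\delta(z)^{-2},
\]
and a short computation then gives $|F(y)-F(z)|\leq C|y-z|/\delta(y)$ whenever $|y-z|\leq \delta(y)/2$. Choosing $c_0>0$ small (depending only on $n$, $d$, $\alpha$, and the Ahlfors regularity constant of $\mu$), we conclude that $B(y,c_0\varepsilon\delta(y))\subset Z(\varepsilon)$ whenever $F(y)\geq 3\varepsilon$. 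This ball sits inside a bounded enlargement of $Q'$, and because $\delta$ is comparable to $\diam(Q')$ throughout it, it contributes at least $c(n,d,\varepsilon)\,\diam(Q')^d$ to $\int \mathbf 1_{Z(\varepsilon)}\delta^{-n+d}\,dx$.

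Assembling these ingredients,
\[
\int_{B(X,R)\cap \mathcal B_M(3\varepsilon)}\!\delta(x)^{-n+d}\,dx
\;\lesssim\; \sum_{Q}\diam(Q)^d
\;\lesssim_M\; \sum_{Q'}\diam(Q')^d
\;\lesssim_{M,\varepsilon}\; \int_{B(X,C_M R)\cap Z(\varepsilon)}\!\delta(x)^{-n+d}\,dx
\;\lesssim_{M,\varepsilon}\; R^d,
\]
using in order: the Whitney estimate $\int_Q\delta^{-n+d}\sim\diam(Q)^d$ on the left side; bounded multiplicity of the map $Q\mapsto Q'$; the pointwise comparison between $\diam(Q')^d$ and the integral of $\mathbf 1_{Z(\varepsilon)}\delta^{-n+d}$ over $B(y(Q),c_0\varepsilon\delta(y(Q)))$ (the balls have bounded overlap since the $Q'$ are essentially disjoint and of comparable size); and finally the Carleson hypothesis on $Z(\varepsilon)$. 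The main technical point is really only the bookkeeping around the $M$- and $\varepsilon$-dependent constants and the overlap/multiplicity counts; none of this is deep once the Lipschitz estimate for $F$ is in hand.
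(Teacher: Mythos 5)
Your proof is correct and closely parallels the paper's own argument: the paper discretizes $\Omega$ via a maximal $\tau$-separated net $H_\tau$ (with $\tau$ small depending on $\varepsilon$) rather than a Whitney decomposition, but both rest on the same scale-invariant Lipschitz estimate for $F$ (derived from \eqref{e:boundongradR}--\eqref{e:diffnormgrad}) and the same bookkeeping that converts each witness $y\in W_M(x)$ with $F(y)\geq 3\varepsilon$ into a definite amount of $Z(\varepsilon)$-mass near $y$, then sums over a boundedly overlapping family and invokes the Carleson hypothesis.
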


Thus, with our assumption that the weak USFE holds, each ${\mathcal B}_M(\eta)$ is a Carleson set.

\begin{proof}
This will be a relatively simple covering argument.  Let $\tau \in (0,1)$ be small, to be chosen soon 
(depending on $\varepsilon$). 
We define a very dense collection, $H_\tau$, of points in $\Omega$, 
 which is a maximal subset of $\Omega$ with 
the property that $|x-y| \geq \tau \max\{\delta(x), \delta(y)\}$ when $x, y \in H_\tau$ are different.

The net $H_\tau$ is useful because $F$ varies so slowly. Indeed, recalling the estimates below \eqref{e:xytogetherfornaught}, $\delta(x)^{-1} F(x) = |\nabla \big(|\nabla D(x)|^2\big)| \leq C \delta(x)^{-1}$.
The same argument, still based on the formula \eqref{e:diffnormgrad} and the estimate \eqref{e:boundongradR}, yields 
\begin{equation}\label{4.4}
|\nabla (\delta(x)^{-1} F)|(x) \leq C \delta(x)^{-2}.
\end{equation}
Let us use this to check that if $\tau$ is small enough (depending on $\varepsilon$),
\begin{equation} \label{4.10n}
|F(x) - F(x')| \leq \varepsilon
\ \text{ for $x, x' \in \Omega$ such that $|x'-x| \leq 4\tau \delta(x)$.} 
\end{equation}
First observe that $\delta(x') \geq \delta(x) - |x'-x| \geq (1-4\tau) \delta(x) \geq \delta(x)/2$
and in fact $\delta(x)/2 \leq \delta(z) \leq 2\delta(x)$ for $z\in [x,x']$.
Then, setting $G(x) = \delta(x)^{-1} F(x)$ just for the sake of the computation,
\begin{eqnarray}\label{4.5}
|F(x) - F(x')| &=& |G(x)\delta(x)-G(x')\delta(x')| \leq \delta(x)|G(x)-G(x')| + G(x')|\delta(x)-\delta(x')|
\nn\\
&\leq& C \delta(x) [|x'-x|\delta(x)^{-2}] + G(x') |x'-x|
\leq C |x'-x| \delta(x)^{-1} \leq C \tau \leq \varepsilon,
\end{eqnarray}
 where we used \eqref{4.4} and 
the fact that $G(x') =\delta(x')^{-1} F(x') \leq C \delta(x')^{-1}$ by the estimate 
above \eqref{4.4}. So \eqref{4.10n} holds.

Now let $x\in {\mathcal B}(3\varepsilon)$ be given. 
This means that we can find $y\in W_M(x)$ such that $F(y) \geq 3\varepsilon$. 

By maximality of $H_\tau$, we can find $z\in H_\tau$ such that 
$|z-y| \leq \tau \max\{\delta(z), \delta(y)\}$ (otherwise, add $y$ to $H_\tau$).
If $\tau$ is small enough, the triangle inequality yields $\delta(y) \leq 2 \delta(z)$ and 
so $|z-y| \leq 2\tau \delta(z)$.

If $\tau$ is small enough, \eqref{4.5} implies that $F(z) \geq 2 \varepsilon$. 
In fact, this stays true for all
$w\in B(z, \tau \delta(z))$. Notice also that since $y\in W_M(x)$, 
$(2M)^{-1} \delta(z) \leq \delta(x) \leq 2M \delta(z)$, and also 
$|x-z| < 2M\delta(x) \leq 4M^2 \delta(z)$. In short, $x \in V(z)$, where
$$
V(z) = \big\{ x\in \Omega \cap B(z, 4M^2 \delta(z))\, ;\, \delta(x) \geq (2M)^{-1} \delta(z) \big\}.
$$

We are ready for the Carleson estimate. Recall \eqref{e:Carlesonset} and set 
$d\sigma(x) = \delta(x)^{-n+d} dx$; we need to show that
\begin{equation}\label{4.6}
A(X,R) := \int_{x\in \Omega \cap B(X,R) \cap {\mathcal B}(3\varepsilon)} 
d\sigma(x) \leq C R^{d}
\end{equation}
for $X\in E$ and $R > 0$. 
 Let $X$  
and $R$ be given. 
Observe that if $x\in \Omega \cap B(X,R) \cap {\mathcal B}(3\varepsilon)$, 
any point $z\in H_\tau$ constructed as above lies in $H_\tau \cap B(X,3MR)$, 
because $|z-x| \leq 2M\delta(x)$ and $\delta(x) \leq |x-X| \leq R$. 
Furthermore, the argument above tells us that every 
$x\in \Omega \cap B(X,R) \cap {\mathcal B}(3\varepsilon)$ 
is in $V(z)$ for some $z \in H(\tau, X, M, R, \varepsilon) \equiv 
\{z \in H_\tau \cap B(X,3MR)\, ; \, F(z) \geq 2\varepsilon\}$. 
 Thus  
\begin{eqnarray}\label{4.7}
A(X,R) &\leq& \sum_{z\in H(\tau, X, M, R,\varepsilon)} \int_{x\in V(z)} d\sigma(x)
\leq C \sum_{z\in H(\tau, X, M, R,\varepsilon)} \delta(z)^{-n+d} |V(z)|
\nn\\
&\leq& C \sum_{z\in H(\tau, X, M, R,\varepsilon)} \delta(z)^{d}
\leq C \sum_{z\in H(\tau, X, M, R,\varepsilon)} \sigma(B(z,\tau \delta(z)/10))
\nn\\
&\leq& C \sigma (Z(\varepsilon) \cap B(X,4MR)) \leq C R^d,
\end{eqnarray}
by definition of $\sigma$ (for the second inequality) and 
the fact that $V(z) \subset B(z, 4M^2 \delta(z))$ (for the third one), then
because $\sigma(B(z, \tau\delta(z)/10)) \geq C^{-1} \delta(z)^{d}$ and the balls 
$B(z,\tau \delta(z)/10)$ are disjoint by definition of $H_\tau$, and finally (for the last line)
since each $B(z,\tau \delta(z)/10)$ is contained in $Z(\epsilon)\cap B(X, 4MR)$, and
by our Carleson estimate assumption on $Z$. The lemma follows.
\end{proof}

As mentioned above, the set $\mathcal B_M(3\varepsilon)$ 
is defined in the right way to make it amenable to a compactness argument. 
In the following lemma we will show that if $x$ is not in $\mathcal B_M(3\varepsilon)$ 
then the set $E$ is relatively flat in a neighborhood of $x$ of radius comparable to $\delta(x)$. 

\begin{lem} \label{t4.3}
For each choice of $0 < d < n$, $\alpha > 0$, an Ahlfors regularity constant $C_0$, 
and constants $\eta >0$ (small) and $N \geq 1$ (large), we can find $M \geq 1$ 
and $\varepsilon > 0$ such that if $\mu$ is 
Ahlfors regular (of dimension $d$, constant $C_0$, and support $E \subset \R^n$), and if 
$x \in \Omega \sm {\mathcal B}_M(3\varepsilon)$, 
 then $d$ is an integer and 
there is a $d$-plane $P$ such that $d_{x,N\delta(x)}(E,P) \leq \eta$.
\end{lem}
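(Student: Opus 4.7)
The plan is to argue by contradiction via a blow-up and compactness argument that ultimately invokes Corollary \ref{t2.2} as the rigidity input.

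Suppose the conclusion fails for some fixed $\eta_0 > 0$ and $N_0 \geq 1$. Then, taking $M_k = k$ and $\varepsilon_k = 1/k$, we obtain a sequence of $d$-Ahlfors regular measures $\mu_k$ (with constant $C_0$, support $E_k$) and points $x_k \in \Omega_k \setminus {\mathcal B}_{M_k}(3\varepsilon_k)$ admitting no $d$-plane $P$ with $d_{x_k, N_0 \delta_k(x_k)}(E_k, P) \leq \eta_0$. Translate and dilate to normalize: set $\tilde\mu_k(A) = \delta_k(x_k)^{-d}\mu_k(x_k+\delta_k(x_k)A)$, so that $\tilde\mu_k$ is still $d$-Ahlfors regular with constant $C_0$, its support $\tilde E_k$ satisfies $0 \in \tilde\Omega_k$ with $\tilde\delta_k(0)=1$, and a direct substitution in \eqref{1.1}, \eqref{1.3}, \eqref{1.4} shows that $R$ picks up a factor $\delta_k(x_k)^{\alpha}$ while $D$ and $\delta$ rescale by $\delta_k(x_k)^{-1}$, with the net effect that $|\nabla D|$ is invariant and $F_{\tilde\mu_k}(y) = F_{\mu_k}(x_k+\delta_k(x_k)y)$. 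Consequently $F_{\tilde\mu_k}(y) < 3\varepsilon_k$ for every $y$ in the rescaled Whitney neighborhood $\widetilde W_k := \{y \in \tilde\Omega_k : |y|<M_k,\ \dist(y,\tilde E_k)\geq M_k^{-1}\}$.

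Pass to a subsequence along which $\tilde\mu_k \rightharpoonup \mu_\infty$ weakly on compact sets and $\tilde E_k \to E_\infty$ locally in Hausdorff distance; standard compactness for uniformly Ahlfors regular measures yields that $\mu_\infty$ is $d$-Ahlfors regular with constant $C_0$, support exactly $E_\infty$, and $0 \in \Omega_\infty$ with $\delta_\infty(0) = 1$. Fix $y \in \Omega_\infty$. The kernels $\nabla^j_y |y-\cdot|^{-d-\alpha}$ for $j \leq 2$ are smooth and bounded on $\{|y-z|\geq \delta_\infty(y)/2\}$ and have tails at infinity dominated by $|y-z|^{-d-\alpha-j}$ uniformly in $k$ by Ahlfors regularity (as in the derivation of \eqref{1.2}). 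These two facts let me pass weak convergence through the integrals defining $\nabla^j R_{\tilde\mu_k}(y)$, and then through the rational expression \eqref{e:diffnormgrad}, to obtain $F_{\tilde\mu_k}(y) \to F_{\mu_\infty}(y)$. Since $\delta_\infty(y)>0$ implies $y \in \widetilde W_k$ for all sufficiently large $k$, and $F_{\tilde\mu_k}(y)<3\varepsilon_k \to 0$ there, we conclude $F_{\mu_\infty}\equiv 0$ on $\Omega_\infty$. Hence $|\nabla D_{\mu_\infty,\alpha}|^2$ is locally constant on $\Omega_\infty$, and strictly positive on each connected component $U$: otherwise $D_{\mu_\infty,\alpha}$ would be constant on $U$, contradicting $D_{\mu_\infty,\alpha} \asymp \delta_\infty > 0$ on $U$ together with $D_{\mu_\infty,\alpha}\equiv 0$ on $\partial U \subset E_\infty$.

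Corollary \ref{t2.2} now forces $d$ to be an integer and $E_\infty$ to be a single affine $d$-plane $P$. Since $\tilde E_k \to P$ locally in Hausdorff distance and $0 \in \R^n \setminus P$, we get $d_{0,N_0}(\tilde E_k, P) \to 0$, which is below $\eta_0$ for large $k$; un-rescaling contradicts the choice of $x_k$ and proves the lemma. The main technical obstacle will be the convergence step: justifying that weak convergence of the $\tilde\mu_k$ suffices to push derivatives of $R_{\tilde\mu_k}$ through to the limit locally uniformly on $\Omega_\infty$, with a uniform-in-$k$ tail estimate so that the vanishing of $F$ really propagates. A secondary (routine) issue is confirming that the Hausdorff limit of supports coincides with $\supp\mu_\infty$, standard for uniformly Ahlfors regular measures; and some mild care is needed to ensure each open component of $\Omega_\infty$ has nonempty boundary in $E_\infty$, which it does because $E_\infty$ is unbounded and $\Omega_\infty = \R^n\setminus E_\infty$.
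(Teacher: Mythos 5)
Your proposal is correct and follows essentially the same compactness argument as the paper: contradiction via rescaled blow-up sequences, passage to the limit of $F$ using uniform Ahlfors regularity (the paper delegates this step to Lemma \ref{l:convunderblowups}, which you re-derive inline), and Corollary \ref{t2.2} as the rigidity input, concluding with Hausdorff convergence of the rescaled sets to the limiting $d$-plane. The only cosmetic differences are your choice of $M_k=k$, $\varepsilon_k=1/k$ versus the paper's $2^k$, $2^{-k}$, and that you spell out the scale-invariance of $F$ explicitly rather than invoking it.
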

 
More explicitly, if $d$ is not an integer, we can find $M$ and $\varepsilon$ 
(depending on $d$ too) such that $\Omega \sm {\mathcal B}_M(3\varepsilon)$ is empty.

\begin{proof}
We will prove this by compactness. 
That is, let $0 < d < n$, $C_0$, $\alpha>0$, $N$, and $\eta > 0$ be given, 
and suppose that for each $k \geq 0$, there is a set, $E_k$, a $d$-Ahlfors regular measure, 
$\mu_k$, with constant $C_0$  and whose support is $E_k$, which provide a counterexample 
with $M_k = 2^k$ and $\varepsilon_k = 2^{-k}$.
That is, let $F_k$ be defined as in \eqref{1.4} but adapted to $E_k, \mu_k$ and $\alpha$. 
We assume that there are points $x_k \in \Omega_k  \equiv \R^n \backslash E_k$, 
that do not lie in the corresponding bad set ${\mathcal B}_{M_k}^{E_k}(3\varepsilon_k)$, i.e., 
\begin{equation} \label{4.14n}
\text{$F_k(y) < 2^{-k}$ for all $y \in \Omega_k \cap B(x_k, 2^k \delta_{E_k}(x_k))$ 
with $\delta_{E_k}(y) \geq 2^{-k}\delta_{E_k}(x_k)$,}
\end{equation}
and yet for which the conclusion does not hold. That is, either $d$ is not an integer,
or $d$ is an integer but there is no $d$-plane $P_k$ such that $d_{x_k,N\delta(x_k)}(E_k,P_k) \leq \eta$.
We want to reach a contradiction.

By translation and dilation invariance, we may assume that $x_k = 0$ and 
$\delta_{E_k}(x_k) = \dist(0,E_k) = 1$.
We use the uniform Ahlfors regularity to replace $\{ (E_k,\mu_k) \}$ with a subsequence 
for which $\mu_k$ converges (in the weak sense) to an Ahlfors regular measure $\mu_\infty$, and $E_k$ converges (in the Hausdorff distance sense) to a closed set $E_\infty$ 
(locally in $\R^n$). 
It is also easy to check that $E_\infty$ is the support of $\mu_\infty$ and that $\mu_\infty$ is $d$-Ahlfors regular with a constant that depends only on $C_0$ and $n$. For more details (albeit in a slightly less general context) see the discussion before Lemma \ref{l:convunderblowups} below. 

Additionally, $R_{\mu_k, \alpha} \rightarrow R_{\mu_\infty, \alpha}$ 
(uniformly on compact sets of $\Omega_\infty \equiv \R^n \backslash E_\infty$) 
and similarly for $D_{\mu_k, \alpha}$ and 
 its derivatives.  
This follows from the weak convergence of 
 the $\mu_k$
(actually a little work is necessary as $\partial^j h(x-y)$ is not compactly supported, but one can argue 
exactly as in Lemma \ref{l:convunderblowups} below). Because of this, and with hopefully obvious notation,
\begin{equation}\label{4.10}
F_\infty(y) = \lim_{k \to +\infty} F_k(y)
\end{equation}
for every $y\in \Omega_\infty$.

Let $W_k \equiv W_{2^k}(0)$ be as in \eqref{4.2} but associated to the set $E_k$. 
Clearly, any $y \in \Omega_\infty$ lies in $W_k$ for $k$ large, and so, by assumption, 
$F_k(y) \leq 2^{-k}$ for $k$ large. Taking limits, \eqref{4.10} implies that $F_\infty(y) = 0$ 
for $y\in \Omega_\infty$ and, by \eqref{1.4}, $|\nabla D_\infty|$ is locally constant.  If by bad luck $|\nabla D_\infty|=0$ on some connected component $\Omega_0\subset \Omega_\infty$, we also get that 
$D_\infty=0$ on 
$\Omega_0$ (because $D_\infty$ vanishes on $E$); this is impossible, by the 
 definition of $D_\infty$ 
(cf. \eqref{1.2} and \eqref{1.3}). 
So $|\nabla D_\infty| \neq 0$ on $\Omega_0$, and now Corollary \ref{t2.2} says that 
$d$ is an integer and $E_\infty$ is a $d$-plane.

Now recall that $E_k$ converges to the $d$-plane $E_\infty$; we thus get that for $k$ large, 
$d_{0,N}(E_k,E_\infty) \leq \eta$,  
 the desired contradiction.
Lemma \ref{t4.3} follows.
\end{proof}

We are now ready to prove Theorem \ref{t:weakUSFEimpliesUR}.

\begin{proof}[Proof of Theorem \ref{t:weakUSFEimpliesUR}] In view of Lemma \ref{t4.2}, to prove Theorem \ref{t:weakUSFEimpliesUR},
we just need to choose $\varepsilon = \varepsilon_\tau > 0$ such that 
\begin{equation}\label{4.13}
\text{if ${\mathcal B}(3\varepsilon)$ 
is a Carleson set in $\Omega$, then ${\mathcal R}(\tau)$ is a Carleson set in 
$E \times (0,+\infty)$.}
\end{equation}
Let us do this. For $(x,r) \in {\mathcal R}(\tau)$, we first use the Ahlfors regularity of $E$ 
to choose $y\in \Omega \cap B(x,r/2)$ such that $\delta(y) \geq 2 \kappa r$, 
where the constant $\kappa > 0$ 
depends on the dimensions and the Ahlfors regularity constant.
The existence of $y$ is standard; if we could not find it, we would be able to find 
 $C_n \kappa^{-n}$ 
balls $B_j$ of radius $\kappa r/2$, centered on $E \cap B(x,r/2)$, and that are disjoint. 
This would yield
$$
C_n^{-1} \kappa^{-n}
(\kappa r/2)^d
\leq C \sum_j \H^d(E \cap B_j) \leq C \H^d(E \cap B(x,r)) \leq C r^d,
$$
a contradiction for $\kappa$ small
 because $d < n$. Denote by $H(x,r)$ the ball $B(y, \kappa r)$. Then
\begin{equation}\label{4.14}
|z-x| \leq r \text{ and } \delta(z) \geq \kappa r \ \text{ for } z\in H(x,r).
\end{equation}

Take $N = 10\kappa^{-1}$; this way, $B(z,N\delta(z))$ contains $B(x,r)$ for $z\in H(x,r)$.
 Let $\eta > 0$, 
to be chosen soon in terms of $\tau, N$, and choose 
 $M = M_{N, \eta} > 0$ and 
$\varepsilon = \varepsilon_{N, \eta} > 0$ as in Lemma \ref{t4.3}. That lemma says that 
if $z\in H(x,r) \sm {\mathcal B}_M(3\varepsilon)$, 
then we can find a $d$-plane $P$ such that $d_{z,N\delta(z)}(E,P) \leq \eta$. 
This also implies that $d_{x,r}(E,P) \leq r^{-1} N\delta(z) d_{z,N\delta(z)}(E,P) 
\leq N d_{z,N\delta(z)}(E,P) \leq N\eta$
(because $B(z,N\delta(z))$ contains $B(x,r)$ and by the definition \eqref{4.8}). 
We choose $\eta$ so small that $N\eta < \tau$, and we get that $\beta_b(x,r) < \tau$. 
This contradicts the fact that $(x,r) \in {\mathcal R}(\tau)$, therefore, every $z\in H(x,r)$ 
lies in ${\mathcal B}_M(3\varepsilon)$. 

Return to the proof of \eqref{4.13} and assume that ${\mathcal B}_M(3\varepsilon)$ 
is a Carleson set in $\Omega$. Let $X\in E$ and $R > 0$ be given, and denote 
by $A(X,R)$ the left-hand side of \eqref{4.12}, with $\mathcal G = {\mathcal R}(\tau)$. 
Since $|H(x,r)| \geq C^{-1} r^n$, we see that
\begin{equation}\label{4.15}
A(X,R) \leq C \int_{x\in E \cap B(X,R)} \int_{r\in (0,R]} {\1}_{{\mathcal R}(\tau)}(x,r) r^{-n}\left(\int {\1}_{H(x,r)}(z) 
dz \right)\,\frac{d\H^d(x) dr}{r}.
\end{equation}
Of course we apply Fubini and integrate 
 in $x$ and $r$ first. 
 
Notice that $z\in B(X,2R) \cap {\mathcal B}_M(3\varepsilon)$ 
and $|x-z| \leq r \leq \kappa^{-1} \delta(z)$, so we get that
\begin{equation}\label{4.16}
A(X,R) \leq \int_{B(X,2R) \cap {\mathcal B}_M(3\varepsilon)} h(z) dz,
\end{equation}
with 
$$
h(z) = \int_{x\in E \cap B(z,\kappa^{-1} \delta(z))} 
\int_{\delta(z) \leq r \leq R} \frac{d\H^d(x) dr}{r^{n+1}}
$$
(because $r \geq |x-z| \geq \delta(z)$). The integral in $r$ is at most $C\delta(z)^{-n}$. 
Then we integrate in $x$ and get that $h(z) \leq C \delta(z)^{d-n}$. Finally,
\begin{equation}\label{4.17}
A(X,R) \leq C \int_{B(X,2R) \cap {\mathcal B}_M(3\varepsilon)} \delta(z)^{d-n} dz \leq C R^d,
\end{equation}
by the assumption that ${\mathcal B}_M(3\varepsilon)$ is a Carleson set. 
\end{proof}

\section{Blow-ups and Non-Tangential Limits of $|\nabla D_\beta|$}\label{s:NTLimits}

Throughout this section let $E\subset \mathbb R^n$ be a $d$-Ahlfors regular set with $d < n-1$ and $n \geq 2$; this assumption is not strictly necessary for all our proofs but without it we must be a bit more careful as to questions of topology and anyways it is the only scenario in which we are interested (we try to state when the result holds with $d < n$). Let $\mu$ be a $d$-Ahlfors regular measure supported on $E$. 

We are interested in the behavior of $\nabla D_{\mu, \beta}$ near $E$. One convenient tool for studying this is the blowup procedure. 

For $Q \in E$, $S \subset E,$ $y\in\Omega$,  
 and $r_i \downarrow 0$ 
we can define

\begin{equation}\label{e:blowupsforrect}
\begin{aligned}
E_{i,Q} \equiv& \, \frac{E - Q}{r_i}\\
\mu_{i,Q}(S) \equiv& \, \frac{\mu(r_iS + Q)}{r_i^d}\\
D_{i, \beta, Q}(y) \equiv& \, \frac{D_{\mu, \beta}(r_iy + Q)}{r_i}.
\end{aligned}
\end{equation}
When the point $Q$ is unimportant or clear from context we may abuse notation and refer simply to $E_i, \mu_i$ and $D_{i,\beta}$. Note that $\mu_i$ is still Ahlfors-regular (with the same constants as $\mu$) and $E_i$ is the support of $\mu_i$. To explain the definition of $D_i$,  let $y\in \mathbb R^n \backslash E_i$, which implies that $y = \frac{z- Q}{r_i}$ for some $z \in \mathbb R^n \backslash E$. Then we can calculate

\begin{equation}\label{e:dmuianddi}D_{\mu_i, \beta}(y)^{-\beta} \equiv \int_{E_i} \frac{d\mu_i(x)}{|x-y|^{d + \beta}} \stackrel{w = r_ix + Q \in E}{= }\int_{E} \frac{d\mu(w)}{r_i^d|\frac{w-Q}{r_i} - \frac{z-Q}{r_i}|^{d+\beta}} = \left(\frac{D_{\mu, \beta}(z)}{r_i}\right)^{-\beta}.\end{equation}

As the $\mu_i$ are uniformly Ahlfors regular, we know that, perhaps passing to a subsequence, 
we have $\mu_i \rightharpoonup \mu_\infty$. 
 Since the $\mu_i$ are uniformly Ahlfors regular, $\mu_\infty$ is also Ahlfors regular and 
its support, $E_\infty$, is the limit (in the Hausdorff distance sense) of the $E_i$. We want to show that 
 $R_i$ and $D_i$ converge to $R_\infty$ and $D_\infty$.

\begin{lem} \label{l:convunderblowups}
Let $E, \mu$ be as above and  $r_k \downarrow 0$ and $Q \in E$. 
With the notation  and assumptions above, $R_k$, $D_k$, and their derivatives 
converge to $R_\infty$, $D_\infty$, and their 
derivatives, uniformly on every compact subset of $\Omega_\infty = \R^n \sm E_\infty$.
\end{lem}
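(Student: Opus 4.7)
The plan is a standard tail-truncation argument combined with the weak convergence $\mu_k \rightharpoonup \mu_\infty$ and the Hausdorff convergence $E_k \to E_\infty$ on compact sets. Throughout, fix a compact set $K \subset \Omega_\infty$; then $c_0 \equiv \dist(K, E_\infty) > 0$, and since $E_k \to E_\infty$ locally in the Hausdorff distance, we have $\dist(K, E_k) \geq c_0/2$ for all $k$ large. In particular the integrands $|x-y|^{-d-\beta-j}$ are smooth and bounded, uniformly in $k$ and in $y \in K$, near the relevant boundary points.

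For the pointwise convergence of $\nabla^j R_k$, fix $y \in K$ and $M \geq 1$. Split
\begin{equation*}
\nabla^j R_k(y) = \int_{E_k \cap B(y,M)} \nabla^j_y h(x-y)\, d\mu_k(x) + \int_{E_k \sm B(y,M)} \nabla^j_y h(x-y)\, d\mu_k(x),
\end{equation*}
where $h(z) = |z|^{-d-\beta}$. For the far piece, the uniform Ahlfors regularity of $\mu_k$ together with a dyadic annular decomposition yields a bound $\leq C M^{-\beta-j}$, independent of $k$; the same estimate holds for $\mu_\infty$. For the near piece, pick a smooth cutoff $\chi_M$ supported in $B(y,2M)$ and equal to $1$ on $B(y,M)$; the function $x \mapsto \chi_M(x) \nabla^j_y h(x-y)$ is continuous and compactly supported on $\R^n$, so by weak convergence $\mu_k \rightharpoonup \mu_\infty$ the near piece converges to the analogous integral against $\mu_\infty$, up to a controlled error coming from the cutoff (again bounded by $CM^{-\beta-j}$). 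Sending first $k \to \infty$ and then $M \to \infty$ gives $\nabla^j R_k(y) \to \nabla^j R_\infty(y)$.

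To upgrade pointwise convergence to local uniform convergence on $K$, I would use equicontinuity. The estimate \eqref{e:boundongradR}, applied to each $\mu_k$, gives
\begin{equation*}
|\nabla^{j+1} R_k(y)| \leq C \delta_{E_k}(y)^{-\beta-j-1} \leq C c_0^{-\beta-j-1} \quad \text{for } y \in K,
\end{equation*}
with constants depending only on the Ahlfors regularity constant of $\mu$. Hence $\{\nabla^j R_k\}$ is uniformly Lipschitz on $K$, and pointwise convergence plus equicontinuity yields uniform convergence on $K$. This handles $R$ and all its derivatives.

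Finally, the corresponding statement for $D_k$ and its derivatives follows algebraically. Since $R_k$ is bounded above and below by constants times $c_0^{-\beta}$ on $K$ (using \eqref{1.2}, uniformly in $k$), the same is true on a neighborhood of $K$; hence $D_k = R_k^{-1/\beta} \to D_\infty$ uniformly on $K$ by continuity of $t \mapsto t^{-1/\beta}$ on a compact interval away from $0$. The formula \eqref{e:diffD}, together with the higher analogues obtained by iterated differentiation, expresses $\nabla^j D_k$ as a polynomial in $R_k$, $R_k^{-1/\beta - \ell}$ and $\nabla^i R_k$ (for $i \leq j$); each factor converges uniformly on $K$, so $\nabla^j D_k \to \nabla^j D_\infty$ uniformly on $K$ as well. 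The main technical point is really just the tail control in the first step; everything else is soft.
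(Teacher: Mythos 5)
Your proof is correct and follows the same overall strategy as the paper's: truncate the tail using Ahlfors regularity, apply weak convergence to the truncated piece, and then obtain uniformity. The main technical variation is in the uniformity step. The paper's argument keeps $x \in K$ as a free parameter and observes that the family of test functions $y \mapsto \nabla^j h(x-y)\varphi(y)$, indexed by $x$, is precompact in $C^0$ by Arzel\`{a}--Ascoli, so that weak convergence need only be invoked for a finite $\varepsilon$-net of test functions. You instead prove pointwise convergence at each fixed $y\in K$ and then upgrade to uniform convergence via equicontinuity of the sequence $\{\nabla^j R_k\}$ on $K$, which you get for free from the already-established bound \eqref{e:boundongradR} applied at order $j+1$. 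Both are standard, and yours is arguably slightly cleaner since it reuses a gradient bound that is already in hand rather than running a compactness argument in the space of kernels.

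One small technical gap worth closing: the function $x \mapsto \chi_M(x)\nabla^j h(x-y)$ that you feed to the weak convergence $\mu_k\rightharpoonup \mu_\infty$ is not actually continuous on all of $\R^n$, since $h$ has a singularity at the origin; the map is discontinuous at $x=y$. You have already noted that $\dist(K,E_k)\geq c_0/2$ for $k$ large and $\dist(K,E_\infty)\geq c_0$, so the supports of all relevant measures stay a definite distance from $y$; the fix is therefore simply to insert one more smooth cutoff vanishing on $B(y,c_0/4)$, which leaves every integral in sight unchanged for large $k$ but makes the test function genuinely continuous and compactly supported. The paper handles the same issue by fixing a closed set $C$ disjoint from $K$ that contains all the $E_k$ (for $k$ large) and working with Lipschitz functions on $C$. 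With that cutoff added, your argument is airtight.
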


\begin{proof}
 Consider $\nabla^j R_k(x) = \int_{E_k} \nabla^j h(x-y) d\mu_k(y)$,
as in \eqref{e:diffR}, and fix a compact set $K \subset \Omega_\infty$. 
Also let $\varepsilon > 0$ be given.
Choose $R > 0$ large enough (depending on $K, \varepsilon$), there exists a smooth cut-off function $\varphi \equiv \varphi_{\varepsilon, R}$, supported in the large ball $B(0,R)$, 
and chosen close enough to $\chi_{B(0,R)}$ so that 
$\int |\nabla^j h(x-y)| |1-\varphi(y)| d\mu_k(y) \leq \varepsilon$ for all $k$, and for all $x\in K$. 

Let $C \subset \mathbb R^n \backslash K$ be a closed set such that $E_k \subset C$ for all large enough $K$ (such a $C$ exists by the compactness of $K$ and the Hausdorff convergence of $E_k \rightarrow E_\infty$). Then the functions $\{\nabla^j h(x-y) \varphi(y)\}_{x\in K}$, are uniformly bounded, as functions of $y$, in $\mathrm{Lip}(\overline{B(0,r)}\cap C)$ (this bound will depend on $C, R, \varepsilon$ but that is irrelevant). By Arzela-Ascoli this sequence (indexed by $x$) is precompact. Thus we can find a finite collection of continuous functions, $\{g_i\}$, supported in $B(0, 2R)$ such that for each $x\in K$ there is a $g_i$ with 
$|g_i(y)-\nabla^j h(x-y) \varphi(y)| \leq \varepsilon R^{-d}$ for all $y\in E_k\cap B(0,2R)$ (for any $k$ large enough). Then by Ahlfors-regularity $\int |g_i(y)-\nabla^j h(x-y) \varphi(y)| d\mu_k 
+ \int |g_i(y)-\nabla^j h(x-y) \varphi(y)| d\mu_\infty \leq C \varepsilon$ for all $k$. Since each
$\int g_i d\mu_k$ converges to $\int g_i d\mu_\infty$, we see that for $k$ large,
$$\Big|\int \nabla^j h(x-y)  [d\mu_k -d\mu_\infty]\Big| \leq C \varepsilon;
$$
the conclusion (for $\nabla^j R_k(x)$) follows. 
The same estimates for $\nabla^j D$ follow as well, because on the compact set $K$ 
we have uniform lower and upper bounds on the $R_k$ (again for $k$ large enough). This proves the lemma.
\end{proof}

Lemma \ref{l:convunderblowups} gives convergence on compact sets separated from $E_\infty$. But we want to understand the convergence up the boundary. In order to do this it will be convenient to introduce ``non-tangential access" regions, for reasons that we will make clear shortly.   For $Q\in E, R > 0$ and $\eta \in (0,1)$ define:

\begin{equation}\label{e:nontangentialaccessE}
\Gamma_{R,\eta}(Q) = \big\{ x\in \Omega \cap B(Q,R) \, ; \, \dist(x,E) \geq \eta |x-Q| \big\}.
\end{equation}

Associated to these non-tangential regions is the concept of a non-tangential limit. 

\begin{defi}\label{d:ntlimit}
We say that $f$ has a non-tangential limit, $L$, at $Q \in E$, 
if there is some $\eta \in (0,1)$ such that 
$$
\lim_{R\downarrow 0} \sup_{x\in \Gamma_{R, \eta}(Q)} |f(x) -L| = 0.
$$ 
We will denote this limit $L$ by $\ntlim_{x\rightarrow Q} f(x)$, or even
$\ntlim^\eta_{x\rightarrow Q} f(x)$ to be explicit.
\end{defi}

Let $E$, $Q$, and $\{r_i \}$ be as for \eqref{e:blowupsforrect}, and assume that
the $E_i$ converge to $E_\infty$. Let $\Gamma^\infty_{R, \eta}(0)$ be defined
as \eqref{e:nontangentialaccessE} but with respect to $E_\infty$. Then, after a new sequence
extraction, the sets $\frac{\Gamma_{R r_i, \eta}(Q)-Q}{r_i}$ converge
to a limit $\Gamma$, with $\Gamma^\infty_{R, \eta/2}(0) \supset \Gamma \supset 
\Gamma^\infty_{R, 2\eta}(0)$.
For the moment, we only know that $D_i$, $R_i$, and their derivatives, converge to 
$D_\infty$, $R_\infty$, and their derivatives, uniformly on compact subsets of 
$\R^n \sm E_\infty$. If we want $\nabla D_i$ to converge to $\nabla D_\infty$
uniformly on compact subsets of $\R^n$, then it should at least converge 
uniformly on each $\Gamma^\infty_{R, \eta}(0)$, which roughly corresponds,
after a change of variables, to $\nabla D$ having a non-tangential limit at $Q$ 
(in fact, for every small $\eta$).

In the following two theorems we give a characterization of the existence of non-tangential limits of $|\nabla D|$ at $\mu$-a.e. point $Q \in E$. It turns out that the existence of this limit is intimately linked to the tangent measures of $\mu$ at $Q$ (and thus the rectifiability of $\mu$). We will assume some basic familiarity with tangent measures here; for more background we suggest Chapter 17 of \cite{Mattila}.

\begin{thm}\label{thm:rectimpliesntlimits}
Let $E$ be $d$-Ahlfors regular and $d$-rectifiable (so necessarily $d\in \mathbb N$), $\mu$ be a $d$-Ahlfors regular measure supported on $E$ and $\beta > 0$ . Then for $\mu$-almost every $Q \in E$,
 the limit $\ntlim_{x\rightarrow Q}^\eta |\nabla D_{\mu, \beta}(x)|$ exists
for every $\eta > 0$. 
\end{thm}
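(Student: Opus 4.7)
The plan is to combine classical tangent measure theory for rectifiable measures with the blow-up convergence of Lemma \ref{l:convunderblowups}. Since $\mu$ is $d$-rectifiable and $d$-Ahlfors regular, for $\mu$-a.e. $Q\in E$ the measure $\mu$ has a unique approximate tangent $d$-plane $P_Q$ (through the origin) and a well-defined density $\Theta(Q) := \Theta^d(\mu,Q)>0$ such that, for every sequence $r_i \downarrow 0$, the blow-ups defined in \eqref{e:blowupsforrect} satisfy $\mu_{i,Q} \rightharpoonup \Theta(Q)\, \H^d|_{P_Q}$ and $E_{i,Q} \to P_Q$ in Hausdorff distance on compact sets (see, e.g., Mattila, Chapter 16). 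Fix such a ``good'' point $Q$, and define
\begin{equation}\nn
L(Q) := c_2\, \Theta(Q)^{-1/\beta},
\end{equation}
where $c_2$ is the constant from \eqref{e:Dforflat}. The plan is to show that $|\nabla D_{\mu,\beta}(x_k)| \to L(Q)$ along every non-tangential sequence $x_k \to Q$.

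Next I would fix $\eta \in (0,1)$ and an arbitrary sequence $x_k \in \Gamma_{R_k,\eta}(Q)$ with $R_k \downarrow 0$. Setting $r_k = |x_k - Q|$ and $y_k = (x_k - Q)/r_k$, the non-tangential condition gives $|y_k|=1$ and $\dist(y_k, E_k) \geq \eta$, where $E_k = (E-Q)/r_k$. Since $E_k \to P_Q$ in the Hausdorff sense on compact sets, any subsequential limit $y_\infty$ of $\{y_k\}$ satisfies $|y_\infty| = 1$ and $\dist(y_\infty, P_Q) \geq \eta$; in particular $y_\infty \in \Omega_\infty \equiv \R^n \sm P_Q$. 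Extract such a subsequence.

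Now apply Lemma \ref{l:convunderblowups}: along the chosen subsequence, $\nabla D_{k,\beta,Q}$ converges to $\nabla D_{\infty,\beta}$ uniformly on compact subsets of $\Omega_\infty$, where $D_{\infty,\beta} = D_{\Theta(Q)\H^d|_{P_Q},\beta}$. Because $y_k \to y_\infty$ and eventually $y_k$ lies in the compact set $\ol{B(0,2)} \cap \{z : \dist(z,P_Q) \geq \eta/2\} \subset \Omega_\infty$, we obtain $\nabla D_{k,\beta,Q}(y_k) \to \nabla D_{\infty,\beta}(y_\infty)$. From the defining scaling \eqref{e:dmuianddi} (and its differentiated version), $\nabla_y D_{k,\beta,Q}(y) = \nabla D_{\mu,\beta}(r_k y + Q)$, hence $\nabla D_{k,\beta,Q}(y_k) = \nabla D_{\mu,\beta}(x_k)$. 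On the other hand, $\mu_\infty = \Theta(Q)\, \H^d|_{P_Q}$ is a flat measure, so by \eqref{e:Dforflat}
\begin{equation}\nn
|\nabla D_{\infty,\beta}(y_\infty)| = c_2 \Theta(Q)^{-1/\beta} = L(Q),
\end{equation}
which is independent of $y_\infty$, of $\eta$, and of the extracted subsequence. A standard subsequences-of-subsequences argument therefore yields $|\nabla D_{\mu,\beta}(x_k)| \to L(Q)$ along the original sequence. Since every non-tangential sequence with aperture $\eta$ (for any $\eta > 0$) produces the same limit, $\ntlim^\eta_{x \to Q} |\nabla D_{\mu,\beta}(x)| = L(Q)$ exists for every $\eta > 0$.

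The main obstacle is essentially technical: ensuring that the uniform-on-compacta convergence furnished by Lemma \ref{l:convunderblowups} can be evaluated at the moving points $y_k$, which requires $\{y_k\}$ to remain in a fixed compact subset of $\Omega_\infty$. This is precisely what the non-tangential condition $\dist(y_k, E_k) \geq \eta$ provides in the limit. The other delicate point is the appeal to uniqueness of tangent measures for rectifiable Ahlfors regular sets --- without it, different blow-up subsequences could a priori produce different flat limits $\Theta'\H^d|_{P'}$ and hence different values $c_2 (\Theta')^{-1/\beta}$, which would preclude non-tangential convergence.
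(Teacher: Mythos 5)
Your proof is correct and follows essentially the same route as the paper's: blow up along the non-tangential sequence, use Lemma \ref{l:convunderblowups} together with the fact that non-tangential approach keeps the rescaled points in a fixed compact subset of $\Omega_\infty$, and invoke uniqueness of tangent measures (flatness at density points of the density $f$ with $\mu = f\,\H^d|_E$) to conclude the limit is the same constant for every sequence and aperture. The only cosmetic difference is that you name the good set of $Q$ up front (unique tangent plane plus density point, which is $\eta$-independent), whereas the paper first records the reduction to countably many apertures $\eta_i = 2^{-i}$; both routes are fine.
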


\begin{proof}

Notice first that it will be enough to show that for \emph{each} $\eta > 0$,
the non-tangential limit $\ntlim_{x\rightarrow Q}^\eta |\nabla D_{\mu, \beta}(x)|$ exists
for $\mu$-almost every $Q \in E$, because then the exceptional set of $Q \in E$
for which the limit fails to exist for \emph{all} $\eta$ is contained in the countable union of
the exceptional sets for $\eta_i \equiv 2^{-i}$.

Let $x_i \in  \Omega \equiv \mathbb R^n\backslash E$ be a sequence of points approaching $Q \in E$ non-tangentially (i.e. 
 $x_i \in \Gamma_{R,\eta}(Q)$ for some $\eta \in (0,1), R> 0$ and $x_i \rightarrow Q$). Let $r_i = |x_i - Q|$ and define $E_i, \mu_i, D_i$ as in \eqref{e:blowupsforrect}. 

By Lemma \ref{l:convunderblowups} (perhaps passing to a subsequence) $E_i \rightarrow E_\infty$ and $\mu_i \rightharpoonup \mu_\infty$ which is a $d$-Ahlfors regular measure supported on $E_\infty$. Furthermore $D_{i} \rightarrow D_{\infty} \equiv D_{\beta, \mu_\infty}$. This convergence happens uniformly on compacta inside of $\Omega_\infty$ in the $C^\infty$ topology. Note that $X_i \equiv \frac{x_i - Q}{r_i} \in \Omega_i \cap \overline{B(0,1)}$. We also note (by the assumption that $x_i$ is a non-tangential sequence), that $\mathrm{dist}(X_i, E_i) \geq \eta$. 

Passing to a subsequence, we may assume that $X_i \rightarrow X_\infty$, and then $X_\infty \in \Omega_\infty$
because $\dist(X_\infty, E_\infty) \geq \eta$ (recall that $r_i = |x_i - Q|$).
Then by \eqref{e:dmuianddi}, 
\begin{equation}\label{e:ntlimitinblowup}
|\nabla D_\infty(X_\infty)| = \lim_{i} |\nabla D_i(X_i)| = \lim_i |\nabla D(x_i)|.
\end{equation} 

The reader may be worried because we only proved the existence of 
$\lim_i |\nabla D(x_i)|$ for a subsequence, but what will save us is that for almost
every choice of $Q \in E$, the left-hand side $L = |\nabla D_\infty(X_\infty)|$ does not depend
on $\{ x_i \}$ or the choice of subsequences. Then it will follow that all the accumulation 
points of $|\nabla D(x_i)|$, where $x_i \in \Gamma_{R,\eta}(Q)$ and $x_i$ tends to $Q$,
are equal to the number $L$ (take a sequence $\{ x_i \}$, so that $|\nabla D(x_i)|$
tends to a given accumulation point, and then proceed as above).
The existence of the non-tangential limit $\ntlim_{x\rightarrow Q}^\eta |\nabla D(x)| = L$ 
will follow.

So we look for $Q \in E$ such that $|\nabla D_\infty(X_\infty)|$ above does not depend
on $\{ x_i \}$, the choice of subsequences, or $X_\infty$ for that matter.

Since $E$ is rectifiable, $E$ has an approximate tangent $d$-plane $P'$ at almost every
point $Q\in E$ (see 15.19 in \cite{Mattila}). 
Since $E$ is Ahlfors regular, and by Exercise 41.21 in \cite{Dv}, for instance, 
$P'$ is a true tangent plane, and any limit $E_\infty$ that we get from extraction
is the vector plane $P$ parallel to $P'$. In addition, 16.5 in \cite{Mattila} says that 
(for almost every $Q\in E$), all the blow-up limits of $\sigma = \H^d_{\vert E}$ 
are flat measures, and in fact of the form $\sigma_\infty = \H^d_{\vert P}$, 
because the density of $\sigma$ is $1$ almost everywhere. In addition,
$\mu = f \sigma$ for some function $f$ such that $C^{-1} \leq f \leq C$, and
if $Q$ is a Lebesgue density point for $f$, all the blow-up limits of 
$\mu$ at $Q$ are of the form $\mu_\infty = f(Q) \lambda_P$.

Thus for almost every point $Q\in E$, we have no choice: in \eqref{e:ntlimitinblowup}, 
$|\nabla D_\infty(X_\infty)|$ must be the constant value of $|\nabla D|$ associated 
to the plane $P$ and the measure $\mu_\infty = f(Q) \lambda_P$. 
The existence of $\ntlim_{x\rightarrow Q}^\eta |\nabla D(x)|$, 
and Theorem \ref{thm:rectimpliesntlimits}, follow.
\end{proof}

What follows is the converse to Theorem \ref{thm:rectimpliesntlimits}. 
However, we note that in order to prove the rectifiability of $E$, we need 
the non-tangential limit to exist inside cones of {\it all} apertures,
as opposed to checking the existence inside cones of any given aperture.

\begin{thm}\label{thm:allntlimitsimplyrect}
Let $E$  be a set supporting the $d$-Ahlfors regular measure, $\mu$,
with $d < n$ (not necessarily an integer) and let $\beta > 0$. 
Assume that for $\mu$-almost every $Q\in E$,
the non-tangential limit $\ntlim_{x\downarrow Q}^\eta |\nabla D_{\mu, \beta}(x)|$ exists
for every aperture $\eta \in (0,1)$.
Then $d$ is an integer and $E$ is $d$-rectifiable.
\end{thm}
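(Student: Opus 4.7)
The plan is to reverse the blow-up argument of Theorem \ref{thm:rectimpliesntlimits}. At a $\mu$-generic $Q\in E$, I will form tangent measures of $\mu$ at $Q$, use the all-aperture NT-limit hypothesis to force $|\nabla D_\infty|$ to be a nonzero constant on the complement of the blow-up $E_\infty$, and then invoke Corollary \ref{t2.2} to conclude that each tangent measure of $\mu$ at $Q$ is flat (and that $d$ is an integer). The standard tangent-measure criterion for rectifiability of Ahlfors regular measures (see, e.g., Chapter 17 of \cite{Mattila}) will then deliver the conclusion.

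Fix $Q\in E$ for which $L(Q) := \ntlim_{x\to Q}^\eta |\nabla D_{\mu,\beta}(x)|$ exists for every $\eta\in(0,1)$; by hypothesis this holds $\mu$-a.e. The value $L(Q)$ is independent of $\eta$, because the cones $\Gamma_{R,\eta}(Q)$ shrink monotonically as $\eta$ increases, so if the limit exists along a larger cone it coincides with the limit along each smaller cone contained in it. Choose any sequence $r_i\downarrow 0$, form the blow-ups $E_i,\mu_i,D_{i,\beta}$ as in \eqref{e:blowupsforrect}, and pass to a subsequence so that $E_i\to E_\infty$ in local Hausdorff distance and $\mu_i\rightharpoonup\mu_\infty$, a $d$-Ahlfors regular measure supported on $E_\infty$. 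Lemma \ref{l:convunderblowups} then yields $D_{i,\beta}\to D_\infty := D_{\mu_\infty,\beta}$, together with all derivatives, uniformly on compact subsets of $\Omega_\infty := \R^n\setminus E_\infty$.

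The key step is to show that $|\nabla D_\infty|\equiv L(Q)$ on $\Omega_\infty$. Fix $X\in\Omega_\infty$ and set $x_i := Q + r_i X$. Since $E_i\to E_\infty$ in local Hausdorff distance, for $i$ large $\dist(X,E_i)\geq \dist(X,E_\infty)/2$, so $\dist(x_i,E)\geq (r_i/2)\dist(X,E_\infty)$ while $|x_i-Q|=r_i|X|$. Thus $x_i\in\Gamma_{R,\eta}(Q)$ for $\eta := \dist(X,E_\infty)/(4|X|)>0$ and any $R>0$ once $i$ is large, and $x_i\to Q$. The hypothesis therefore gives $|\nabla D(x_i)|\to L(Q)$. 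On the other hand, differentiating \eqref{e:dmuianddi} in $y$ yields $|\nabla D_i(X)|=|\nabla D(x_i)|$, and Lemma \ref{l:convunderblowups} gives $|\nabla D_i(X)|\to|\nabla D_\infty(X)|$. Hence $|\nabla D_\infty(X)|=L(Q)$, as claimed.

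The constant $L(Q)$ must be strictly positive: by \eqref{1.2} applied to $\mu_\infty$, $D_\infty(Y)\geq C^{-1}\dist(Y,E_\infty)$, so $D_\infty$ cannot be constant on any connected component of $\Omega_\infty$ (every such component has points of $E_\infty$ in its boundary, where $D_\infty$ extends continuously by $0$), which rules out $L(Q)=0$. Corollary \ref{t2.2} then applies and forces $d$ to be an integer, $E_\infty$ to be a $d$-plane, and $\mu_\infty$ to be a constant multiple of $\mathcal{H}^d|_{E_\infty}$. Since the blow-up sequence $\{r_i\}$ was arbitrary, \emph{every} tangent measure of $\mu$ at $Q$ is flat, and the rectifiability of $\mu$ (hence of $E$) follows from the tangent-measure criterion cited above. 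The main difficulty is precisely the upgrade from pointwise convergence in $\Omega_\infty$ to the identity $|\nabla D_\infty|\equiv L(Q)$ on \emph{all} of $\Omega_\infty$: points $X\in\Omega_\infty$ close to $E_\infty$ correspond to approaches toward $Q$ of arbitrarily small aperture, so the hypothesis that the NT limit exists for every $\eta\in(0,1)$ is indispensable — knowledge of the limit along a single fixed cone would leave a gap near $E_\infty$ that Corollary \ref{t2.2} could not bridge.
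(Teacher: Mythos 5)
Your proposal is correct and takes essentially the same approach as the paper's proof: blow up at a generic point, use the all-aperture hypothesis together with Lemma \ref{l:convunderblowups} to show $|\nabla D_\infty|$ is a positive constant on $\Omega_\infty$, invoke Corollary \ref{t2.2} to conclude every tangent measure is flat and $d$ is an integer, and finish with the tangent-measure criterion from \cite{Mattila}. The only cosmetic difference is that you pin down $|\nabla D_\infty(X)|=L(Q)$ for a single arbitrary $X\in\Omega_\infty$ directly, whereas the paper compares two points $Y,Z\in\Omega_\infty$; both use the all-aperture hypothesis in exactly the same way to handle points near $E_\infty$.
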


\begin{proof}
We will show that at $\mu$-almost every $Q\in E$, every tangent measure to $\mu$ is flat 
 (i.e., is a multiple of the restriction of $\mathcal H^d$ to a $d$-plane).
This implies that $\mu$ is $d$-rectifiable and thus (since $\mu$ is Ahlfors regular) 
that $E$ is $d$-rectifiable.

Let $Q\in E$ be a point such that the non-tangential limit of $|\nabla D_{\mu, \beta}|$ 
exists for every aperture and let 
 $\{ r_i \}$ be any sequence of positive numbers that tends to $0$.
Then define $E_i, \mu_i$ and $D_{i} \equiv D_{\mu_i,\beta}$ as in \eqref{e:blowupsforrect}. 
Lemma \ref{l:convunderblowups} shows that, passing to subsequence 
if needed, we may assume that $E_i$ tends to a limit $E_\infty$,
$\mu_i$ has a weak limit $\mu_\infty$, and $D_{i}$ converges, uniformly on compact subsets
of $\R^n \sm E_\infty$, to $D_{\infty} \equiv D_{\mu_\infty, \beta}$.

We now want to show that 
 $|\nabla D_{\infty}|$ 
is constant on 
 $\Omega_\infty \equiv \mathbb R^n \backslash E_\infty$ 
and is equal to 
 $L = \ntlim_{x \rightarrow Q} |\nabla D_{\mu, \beta}(x)|$. 
Let 
$Y, Z \in \Omega_\infty$ and set $\eta_Y = \mathrm{dist}(Y, E_\infty)/(2|Y|) \in (0,1)$, and similarly let $\eta_Z = \mathrm{dist}(Z, E_\infty)/(2|Z|) \in (0,1)$.  We can assume that $\eta_Z \leq \eta_Y$ so that $\Gamma_{1, \eta_Y}(Q) \subseteq \Gamma_{1, \eta_Z}(Q)$.
By the convergence of $E_i$ to $E_\infty$,
we have $Z \in \Omega_i$ for $i$ large enough and $\mathrm{dist}(Z, E_i) \geq \mathrm{dist}(Z, E_\infty)/2$. Therefore, $r_iZ + Q\in \Omega$ and $\mathrm{dist}(r_iZ + Q, E) = r_i \mathrm{dist}(Z, E_i) \geq r_i\mathrm{dist}(Z, E_\infty)/2 = r_i \eta_Z|Z|$. 
Thus $r_iZ + Q \in \Gamma_{1, \eta_Z}(Q)$ for $i$ large enough (where the cone is with respect to $\Omega$). 
Similarly $r_i Y + Q \in \Gamma_{1, \eta_Y}(Q)$ for $i$ large enough (again where the cone with with respect to $\Omega$). 
Observe that $L = \ntlim_{x \rightarrow Q}^{\eta_Z} |\nabla D_{\mu, \beta}(x)|$ because the 
nontangential convergence holds in every cone.
We can then write 
$$
|\nabla D_{\mu_\infty, \beta}(Z)| =  \lim_{i \to +\infty} 
|\nabla D_{\mu, \beta}(r_i Z + Q)| 
=L= \lim_{i} |\nabla D_{\mu, \beta}(r_iY + Q)| = |\nabla D_{\mu_\infty, \beta}(Y)|,
$$
where the first and last equalities follow from \eqref{e:blowupsforrect}, $\mu_i \rightharpoonup \mu_\infty$ and Lemma \ref{l:convunderblowups}.

We conclude that $|\nabla D_{\mu_\infty, \beta}|$ is constant on $\Omega_\infty$. 
If that constant is zero then by the fact that $D_{\mu_\infty, \beta}$ vanishes on 
$E_\infty$ we get that $D_{\mu_\infty, \beta} \equiv 0$. This
 contradicts \eqref{1.2}-\eqref{1.3}.
Thus, $|\nabla D_{\mu_\infty, \beta}|$ is a non-zero constant on $\Omega_\infty$ and 
 by Corollary \ref{t2.2},
$E_\infty$ is a $d$-dimensional affine space and 
$\mu_\infty$ is a constant multiple of $\mathcal H^d$ restricted to $E_\infty$. Thus $\mu_\infty$ is flat. 

In the language of tangent measures, all the tangents to $\mu$ at $Q$ are flat measures. 
Furthermore by Ahlfors regularity the upper density of $\mu$ is bounded away from infinity 
and the lower density of $\mu$ is bounded away from zero. 
Thus we can invoke Theorem 17.6 in \cite{Mattila}
and conclude that the support of $\mu$ is a $d$-rectifiable set. 
Since $E$ is the support of $\mu$ we are done. 
\end{proof}

Finally, we can compute the non-tangential limit of $|\nabla D_{\mu, \beta}|$ 
at a point at which the $d$-density of $\mu$ exists  (call it $\Theta^d(\mu, Q)$)
and $E$ has a unique tangent plane, call it $T_QE$. Blowing up at such a point gives $\mu_\infty = \Theta^d(\mu, Q)\mathcal H^d|_{T_QE}$. Recalling that $D_{\mathcal H^d|_P, \beta} = c_{\beta, n,d} \delta_P$ for any plane $P$, we have
 \begin{equation}\label{e:dinfinity} D_{\infty} = D_{\Theta^d(\mu, Q)\mathcal H^d|_{T_QE}, \beta} = c_{\beta, n, d} \Theta^d(\mu, Q)^{-1/\beta} \delta_{T_QE},\end{equation} which implies that \begin{equation}\label{e:ntlimitatnicepoint} \ntlim_{x\rightarrow Q} |\nabla D_{\mu, \beta}| = c_{\beta, n, d} \Theta^d(\mu, Q)^{-1/\beta}.\end{equation}

\section{$D_\alpha$ 
for ``magic-$\alpha$"}
\label{s:MagicAlpha}

Let $E \subset \mathbb R^n$ be a $d$-Ahlfors regular set and let $\mu$ be a $d$-Ahlfors regular measure supported on $E$.  If the numbers $n$, $d < n$ (not necessarily integer), and $\alpha > 0$ are such that
\begin{equation}\label{e:magicalpha}
n = d+2+\alpha
\end{equation}
then it turns out that the function $D_{\mu, \alpha}$ defined in \eqref{1.3} is a solution of 
$$
L_{\mu, \alpha} u \equiv -\mathrm{div}\left(\frac{1}{D_{\mu, \alpha}^{n-d-1}}\nabla u\right)= 0
$$ 
 in $\Omega = \R^n \backslash E$ (throughout this section $\alpha$ will satisfy \eqref{e:magicalpha} whereas $\beta > 0$ will be arbitrary. In particular, we will try to make it clear when we are assuming that $d < n-2$). 

We can check this (in the classical sense) in $\Omega$ simply by differentiating the smooth function $D_{\mu, \alpha}$ (recall \eqref{e:diffD}),
\begin{eqnarray}\label{5.2}
L_{\mu, \alpha} D_{\mu, \alpha} &=& -{\rm div} \left(D_{\mu, \alpha}^{-n+d+1} \nabla D_{\mu, \alpha}\right) = 
\frac{1}{\alpha} {\rm div}\left( D_{\mu, \alpha}^{-n+d+1} R_{\mu, \alpha}^{-\frac{1}{\alpha}-1}\nabla R_{\mu, \alpha}\right)
\nn\\
&=&  \frac{1}{\alpha} {\rm div}\left( D_{\mu, \alpha}^{-n+d+1} D_{\mu, \alpha}^{1+\alpha} \nabla R_{\mu, \alpha}\right) = \frac{1}{\alpha} \Delta R_{\mu, \alpha}
\end{eqnarray}
by \eqref{1.3} and \eqref{e:magicalpha}. Then by \eqref{1.1}, (and \eqref{e:magicalpha})
\begin{equation}\label{5.3}
R_{\mu, \alpha}(x) \equiv \int_{y\in E} |x-y|^{-d-\alpha} d\mu(y) = \int_{y\in E} |x-y|^{2-n} d\mu(y);
\end{equation}
we recognize the Green kernel (notice that $n>2$ by \eqref{e:magicalpha}); hence $L_{\mu, \alpha}D_{\mu, \alpha}=0$ on $\Omega$.

We want to say that $D_{\mu, \alpha}$ is the ``Green function with pole at infinity" associated to the operator $L \equiv L_{\mu, \alpha}$ (indeed, it is a solution which behaves like distance to the boundary). To do so properly however, we need to define the Green function with a pole at infinity (and the corresponding harmonic measure). We will then show that in the complement of any $d$-Ahlfors regular set, $E$, these objects exist and are unique up to multiplication by a positive scalar. Throughout, we will use some of the elliptic regularity and potential theory studied in \cite{DFM2}, in particular we will assume the reader is comfortable with the existence and properties of a Green function and associated harmonic measure with finite pole.

Before we begin we must recall the weighted Sobolev spaces introduced in \cite{DFM2}. Throughout this section $E$ will be a closed $d$-Ahlfors regular set and $\delta(x)$ will denote the distance from $x$ to the closest point in $E$. 

\begin{defi}\label{d:weightedsobolev}[see \cite{DFM2}]
Let $E \subset \mathbb R^n$ be a $d$-Ahlfors regular set for some $d < n-1$ (not necessarily an integer). Set $w(x) \equiv \delta(x)^{-(n-d-1)}$ and define the weighted Sobolev space 
$$
W\equiv W_w^{1,2} \equiv \{u \in L^1_{\mathrm{loc}}(\R^n \backslash E): \; 
\nabla u \in L^2(\mathbb R^n\backslash E, wdx)\}.
$$ 
We can then localize these Sobolev spaces: for any open $\mathcal O \subset \mathbb R^n$ we define 
$$
W_r(\mathcal O) = \{u \in L^1_{\mathrm{loc}}(\mathcal O), \; 
\varphi f \in W
\: \text{for all}\:\: \varphi \in C_0^\infty(\mathcal O)\}.
$$
\end{defi}

It will be useful later to know that $w(x)$ is locally integrable. Indeed, it follows from Ahlfors regularity that $$ |\{x\in B(Q,r) \mid w(x) > \lambda\}| = |\{x\in B(Q,R)\mid \delta(x) < \lambda^{-\frac{1}{n-d-1}}\}| \leq C\lambda^{-\frac{n-d}{n-d-1}}R^d,$$ for all $R > 0$ and $Q\in E$, which in turn implies that $$\begin{aligned} \int_{B(Q,R)} w(x)dx \leq& \int_0^{R^{-(n-d-1)}} |B(Q,R)|d\lambda + CR^d\int_{R^{-(n-d-1)}}^\infty \lambda^{-\frac{n-d}{n-d-1}} d\lambda\\ =& CR^n R^{-(n-d-1)}+ CR^d[-(n-d-1)\lambda^{-\frac{1}{n-d-1}}]_{R^{-(n-d-1)}}^\infty \leq CR^{d+1}.\end{aligned}$$ (We thank a referee for pointing a minor error in the previous version of this computation and for providing us with a fix). 

These Sobolev spaces are the setting in which the elliptic estimates and potential theory established in \cite{DFM2} hold. We can now define the Green function and harmonic measure with a pole at infinity. 

\begin{defi}\label{d:poleatinfinity}
Let $E \subset \mathbb R^n$ be a $d$-Ahlfors regular set for some $d < n-1$ (not necessarily an integer), $\beta \in (0,1)$ and $\mu$ be a $d$-Ahlfors regular measure supported on $E$. Let $\Omega = \mathbb R^n \backslash E$, $D\equiv D_{\mu, \beta}$ be as in \eqref{1.3} and $L \equiv L_{\mu,\beta}$ be the associated degenerate elliptic operator. We say that $u_\infty, \omega_\infty$ are the Green function and harmonic measure with pole at infinity, respectively (associated to $\beta, \mu$), if $u_\infty \in W_r(B(Q, R))\cap C(\mathbb R^n)$ for every $Q\in E$ and $R > 0$ and the following holds:
\begin{equation}\label{e:atinfinity}\begin{aligned}
u_\infty > \,& 0,\: \text{in}\: \Omega, \\
u_\infty = \,& 0,\: \text{on}\: E,\\
L u_\infty =\, &0,\: \text{in}\: \Omega,\\
\int_{\Omega}D^{-(n-d-1)} \nabla  u_\infty \cdot \nabla \varphi \, dX =& \int_E \varphi d\omega_\infty,\: \forall \varphi \in C^\infty_0(\mathbb R^n).
\end{aligned}
\end{equation}
\end{defi}

Before we can show the existence and uniqueness of these objects, we must recall the comparison principle for solutions, stated and proven in our setting in \cite{DFM2} (see also, e.g.,  \cite{jerisonkenigNTA} for the co-dimension one statement). Recall from \cite{DFM2} that there exists an $M > 1$ such that for $Q \in E$ and $r > 0$ there exists a point $A_r(Q)$ with \begin{equation}\label{e:corkscrew} |A_r(Q) - Q| \leq r \leq M\delta(A_r(Q)).\end{equation} We call $A_r(Q)$ a {\bf corkscrew point} for $Q$ at scale $r >0$. 

\begin{thm}\label{t:boundarycomparison}[Theorem 11.146 in \cite{DFM2}]
Let $Q \in E$ and $r > 0$ and let $X_0 = A_r(Q) \in \Omega$ be the corkscrew point 
for $Q$ at scale $r$. Let $u, v \in W_r(B(Q, 2r))$ be non-negative, not identically zero, 
solutions of $L_{\mu, \beta}u = L_{\mu, \beta} v = 0$ in $B(Q, 2r)$, $\beta>0$, such that $Tu = Tv = 0$ 
on $E \cap B(Q, 2r)$ (where $T$ is the trace operator defined in Theorem 3.4 in \cite{DFM2}). Then there exists a constant $C > 1$ depending on $n,d$, and Ahlfors regularity constants, such that \begin{equation}\label{e:boundarycomparison} 
C^{-1}\frac{u(X_0)}{v(X_0)} \leq \frac{u(X)}{v(X)} \leq C\frac{u(X_0)}{v(X_0)}, 
\forall X \in \Omega\cap B(Q, r).
\end{equation}
\end{thm}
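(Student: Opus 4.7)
The plan is to follow the classical blueprint for the boundary Harnack principle due to Caffarelli--Fabes--Mortola--Salsa and Jerison--Kenig \cite{jerisonkenigNTA}, adapted to the degenerate elliptic setting of \cite{DFM2}. The key structural fact that makes this possible is that, although the weight $D_{\mu,\beta}^{-(n-d-1)}$ degenerates at $E$, the function $D_{\mu,\beta}$ is comparable to $\delta$ by \eqref{1.2}, so this weight behaves (up to multiplicative constants) like $\delta^{-(n-d-1)}$; this is a Muckenhoupt-type weight and the usual degenerate elliptic machinery applies. I would also rely on the fact that $d<n-1$ implies $\Omega=\R^n\sm E$ is quantitatively connected and has corkscrew points at every scale, so Harnack chains exist and can be iterated.

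First I would establish two building blocks for any nonnegative solution $u\in W_r(B(Q,2r))$ with $Tu=0$ on $E\cap B(Q,2r)$: a Carleson-type upper bound
\[
u(X) \leq C\,u(A_r(Q))\,\frac{\omega^{X}_{\mu,\beta}(E\cap B(Q,2r))}{\omega^{A_r(Q)}_{\mu,\beta}(E\cap B(Q,2r))},\qquad X\in\Omega\cap B(Q,r),
\]
and a matching pointwise lower bound $u(X)\geq C^{-1} u(A_r(Q))$ for all $X$ in a non-tangential region attached to $Q$. The upper bound is obtained by comparing $u$, via the maximum principle (available in \cite{DFM2}), with a suitable multiple of $X\mapsto \omega^{X}_{\mu,\beta}(E\cap B(Q,2r))$; the main analytic ingredients are the H\"older regularity up to the boundary and the doubling of $\omega_{\mu,\beta}$ established in \cite{DFM2}. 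The lower bound is proved by using the interior Harnack inequality along a Harnack chain connecting $A_r(Q)$ to $X$; the number of links is controlled because of the corkscrew condition \eqref{e:corkscrew} and the connectivity of $\Omega$.

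Once these two estimates are in hand for both $u$ and $v$, the theorem follows by dividing. Applying the upper bound to $u$ and the lower bound to $v$ (and symmetrically exchanging the roles), the harmonic measure ratios cancel and one obtains
\[
C^{-1}\frac{u(A_r(Q))}{v(A_r(Q))} \leq \frac{u(X)}{v(X)} \leq C\,\frac{u(A_r(Q))}{v(A_r(Q))},\qquad X\in \Omega\cap B(Q,r),
\]
which is the desired comparison \eqref{e:boundarycomparison}.

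The main obstacle I expect is the Carleson-type upper bound: it is where the degeneracy of the weight $D_{\mu,\beta}^{-(n-d-1)}$ and the higher-codimensional geometry of $E$ interact most strongly, and it requires a careful combination of the boundary H\"older estimate, the doubling of harmonic measure, and an iteration scheme based on the interior Harnack inequality. Each of these tools is available in \cite{DFM2}, but assembling them into the Carleson estimate is the delicate technical heart of the argument; once this is achieved, the rest reduces to the well-established scheme in \cite{jerisonkenigNTA}.
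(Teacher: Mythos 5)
This statement is not proved in the paper at all: it is quoted verbatim as Theorem~11.146 of \cite{DFM2}, and the present paper simply invokes it. So there is no ``paper's own proof'' to compare against here; what you have written is a sketch of how the cited theorem is established in \cite{DFM2}.

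That said, your sketch does capture the broad Caffarelli--Fabes--Mortola--Salsa / Jerison--Kenig blueprint (Carleson-type upper bound against harmonic measure, non-tangential lower bound via Harnack chains, then divide), and this is indeed the skeleton of the argument in \cite{DFM2}. One caution, however: your closing claim that once the Carleson estimate is in hand ``the rest reduces to the well-established scheme in \cite{jerisonkenigNTA}'' undersells the codimension-larger-than-one difficulty. The paper itself flags (in the remark preceding Corollary~\ref{c:boundaryholdercomparison}) that the usual elliptic proof in \cite{jerisonkenigNTA} leans on interior approximating domains, and these are genuinely problematic when $\partial\Omega=E$ has dimension $d<n-1$: the sawtooth/approximating regions one would normally carve out have boundary pieces of mixed dimension, and the maximum-principle comparisons over those regions no longer come for free. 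This is precisely why the authors prove the H\"older quotient bound (Corollary~\ref{c:boundaryholdercomparison}) by the Fabes--Safonov--Yuan oscillation-improvement argument rather than via approximating domains, and why the proof of Theorem~11.146 in \cite{DFM2} occupies a substantial portion of their Section~11 rather than being a routine transcription of \cite{jerisonkenigNTA}. So your identification of the Carleson estimate as ``the delicate technical heart'' is right, but the subsequent steps are not routine in this setting either; if you wanted to actually reprove the theorem you would need to engage seriously with how \cite{DFM2} circumvents the absence of nice approximating domains.
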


The comparison theorem leads naturally to H\"older regularity of quotients at the boundary. Our proof below is inspired by \cite{FSY} (Theorem 4.5 there), who show this regularity for solutions of a parabolic problem. The ``usual" elliptic proof (cf. \cite{jerisonkenigNTA}) relies on interior approximating domains, which are difficult in the co-dimension greater than one setting because of the presence of boundaries with mixed dimension (see the discussion at the beginning of Section 11 of \cite{DFM2}). 

\begin{cor}\label{c:boundaryholdercomparison}
Let $u, v, Q, r$ be as in Theorem \ref{t:boundarycomparison}. There exists 
$c >0, \gamma \in (0,1)$ (depending only on the Ahlfors regularity of $E$, $n$, $d$ and $\beta$) such that \begin{equation}\label{e:boundaryholdercomparison} \left|\frac{u(X)v(Y)}{u(Y)v(X)} - 1\right| \leq c \left(\frac{\rho}{r}\right)^\gamma, \end{equation} for all $X,Y\in B(Q,\rho)\cap \Omega$, as long as $\rho < r/4$. 
\end{cor}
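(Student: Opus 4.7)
The plan is to reduce the conclusion to a geometric decay of the oscillation of the quotient $f(X) := u(X)/v(X)$ at the boundary, following the classical oscillation-lemma iteration but using only the boundary comparison principle (Theorem \ref{t:boundarycomparison}) at dyadic scales. First, I would note that since $L_{\mu,\beta}$ is linear, for any constants $a \leq b$, the functions $u - av$ and $bv - u$ are still solutions of $L_{\mu,\beta}$ in $B(Q,2r)$ with zero trace on $E \cap B(Q,2r)$ (in the $W_r$ sense of \cite{DFM2}), and they are nonnegative in $B(Q,r)\cap\Omega$ whenever $a \leq f \leq b$ there. A single application of Theorem \ref{t:boundarycomparison} (comparing $u$ to $v$) gives $C^{-1}f(X_0) \leq f(X) \leq C f(X_0)$ on $B(Q,r)\cap\Omega$, where $X_0=A_r(Q)$; in particular $f$ is bounded above and below by comparable constants.

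Next, for each $k\geq 0$ set $r_k = r/2^{k+1}$ and let $X_k = A_{r_k}(Q)$ be a corkscrew point; denote
\[
M_k = \sup_{B(Q,r_k)\cap\Omega} f, \qquad m_k = \inf_{B(Q,r_k)\cap\Omega} f.
\]
The key step is to show the oscillation bound
\[
M_{k+1} - m_{k+1} \leq \theta (M_k - m_k), \qquad \theta \in (0,1),
\]
with $\theta$ depending only on $n$, $d$, $\beta$, and the Ahlfors regularity constant. To prove it, observe that since $(M_k v - u)(X_k) + (u - m_k v)(X_k) = (M_k - m_k)v(X_k)$, at least one of the two summands — say $M_k v - u$ — is at least $\tfrac{1}{2}(M_k - m_k)v(X_k)$ at $X_k$. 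Applying Theorem \ref{t:boundarycomparison} to the nonnegative solutions $M_k v - u$ and $v$ on the pair of balls $B(Q,2r_k) \subset B(Q, r_{k-1}\cdot 2)$ (or rather on $B(Q,r_k)$ with the corresponding doubled ball) yields
\[
\frac{M_k v(X) - u(X)}{v(X)} \geq C^{-1}\,\frac{M_k v(X_k) - u(X_k)}{v(X_k)} \geq \frac{C^{-1}}{2}(M_k - m_k)
\]
for every $X \in B(Q, r_{k+1})\cap\Omega$. Rearranging gives $M_{k+1} \leq M_k - \tfrac{1}{2C}(M_k - m_k)$, hence the claim with $\theta = 1 - (2C)^{-1}$. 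Iterating and comparing $\rho$ to the nearest $r_k$ produces the Hölder bound $|f(X) - f(Y)| \leq c(\rho/r)^\gamma f(X_0)$ for $X,Y\in B(Q,\rho)\cap\Omega$, with $\gamma = -\log_2\theta$.

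Finally, writing
\[
\frac{u(X)v(Y)}{u(Y)v(X)} - 1 = \frac{f(X) - f(Y)}{f(Y)},
\]
and combining the Hölder estimate just obtained with the lower bound $f(Y) \geq C^{-1} f(X_0)$ from the first step yields the desired inequality \eqref{e:boundaryholdercomparison}.

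The main obstacle I expect is a careful bookkeeping of the radii and of the admissibility of the test solutions $M_k v - u$ and $u - m_k v$: one must verify at each scale that these functions indeed belong to the weighted Sobolev class $W_r(B(Q, 2r_k))$, that their traces vanish on $E\cap B(Q, 2r_k)$, and that they are nonnegative in the slightly larger ball where the comparison principle is invoked (not just in the supremum ball). All of this is true because $L_{\mu,\beta}$ is linear and the trace operator of \cite{DFM2} is linear, but the bookkeeping is the one nontrivial point; the oscillation iteration itself is then standard.
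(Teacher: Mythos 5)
Your argument is correct and follows essentially the same route as the paper: both prove a geometric decay of the oscillation of $u/v$ across dyadic scales by normalizing the quotient and invoking the boundary comparison principle (Theorem~\ref{t:boundarycomparison}) once per scale, then iterate. The only stylistic difference is in how the single-scale improvement is derived: the paper normalizes $U = (u - c_1 v)/(c_2 - c_1)$ so that $0 \leq U/v \leq 1$ and splits into two cases according to whether $U/v$ at the corkscrew point exceeds $C^{-2}$ (getting $\theta = \max(C^{-1}, 1 - C^{-3})$), while you use the classical ``one of the two halves is at least half'' trick applied to $M_k v - u$ and $u - m_k v$ (getting $\theta = 1 - (2C)^{-1}$). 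These are interchangeable and lead to the same iteration and the same final formula $\gamma = -\log_2\theta$.

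One bookkeeping point you correctly anticipated: when you invoke Theorem~\ref{t:boundarycomparison} for the pair $(M_k v - u, v)$, nonnegativity of $M_k v - u$ is only guaranteed on $B(Q, r_k)\cap\Omega$, so you must take $B(Q, r_k)$ as the \emph{larger} ball in the theorem (i.e.\ the ``$B(Q,2r)$'' of its statement), obtain the conclusion on $B(Q, r_k/2) = B(Q, r_{k+1})$, and evaluate the arithmetic identity at the corkscrew point $A_{r_{k+1}}(Q)$ rather than at $X_k = A_{r_k}(Q)$ as written. This is exactly the relabeling you flag in your final paragraph, and with it the iteration closes; the paper handles it identically by applying the comparison principle ``with $2r$ replaced by $r$.'' Your final reduction of $\bigl|\tfrac{u(X)v(Y)}{u(Y)v(X)} - 1\bigr|$ to $|f(X)-f(Y)|/f(Y)$, bounding the numerator by the iterated oscillation decay and the denominator from below by $C^{-1}f(X_0)$, is also the paper's (implicit) final step and is correct.
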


\begin{proof}
We claim there exists some $\theta \in (0,1)$ (independent of $Q$ and $r$) such that \begin{equation}\label{e:improvementofoscillation}\mathrm{osc}_{B(Q,r/2)} \frac{u}{v} \leq \theta \,\mathrm{osc}_{B(Q,r)} \frac{u}{v},\end{equation} for all $r < r/2$. That the claim implies \eqref{e:boundaryholdercomparison} follows from iterating  \eqref{e:improvementofoscillation} and appealing to Theorem \ref{t:boundarycomparison}. 

Let $\inf_{B(Q,r)} \frac{u}{v} = c_1$ and $\sup_{B(Q,r)} \frac{u}{v} = c_2$ and replace 
$u$ by $U = \frac{u-c_1v}{c_2-c_1}$ (if $c_2 = c_1$ then $u = c_1v$ and the result is trivial). 
It follows that $L_{\mu,\beta} U = 0$ and $U \geq 0$ in $B(Q,r)$ and $U = 0$ on 
$E \cap B(Q,r)$. So we can apply Theorem \ref{t:boundarycomparison} with $2r$ replaced by $r$. Also note that 
$$
0 \leq \frac{U}{v}(Z) \leq 1 = \mathrm{osc}_{B(Q,r)} \frac{U}{v}, \: \forall Z \in B(Q,r)\cap \Omega.
$$ 
Note that $\mathrm{osc}(U/v) = (c_2 -c_1)^{-1}\mathrm{osc}(u/v)$. So if estimate  \eqref{e:improvementofoscillation} holds for $U, v$ it also holds for $u,v$. 

Let $A_{r/2}(Q)$ be the corkscrew point for $Q$ at scale $r/2$. If $\frac{U}{v}(A_{r/2}(Q))< C^{-2}$ (where $C > 1$ is the constant from Theorem \ref{t:boundarycomparison}), then by Theorem \ref{t:boundarycomparison} we would have $$0 \leq \frac{U}{v}(Z) \leq C \frac{U}{v}(A_{r/2}(Q))< \frac{1}{C},\: \forall Z \in B(Q, r/2)\cap \Omega,$$ which would imply that $\mathrm{osc}_{B(Q,r/2)}\frac{U}{v} < \frac{1}{C}$, and hence, the desired result (with $\theta = \frac{1}{C}$). 

If, on the other hand, $\frac{U}{v}(A_{r/2}(Q)) > C^{-2}$, we apply Theorem \ref{t:boundarycomparison} to obtain $$\inf_{B(Q,r/2)}\frac{U}{v} \geq C^{-1} \frac{U}{v}(A_{r/2}(Q)) > C^{-3}.$$

This implies that $\mathrm{osc}_{B(Q,r/2)} \frac{U}{v} \leq 1- C^{-3}$ and so \eqref{e:improvementofoscillation} holds with $\theta = 1-C^{-3}$. 
\end{proof}

Finally, using an argument inspired by \cite{kenigtoroannals}, Lemma 3.7 and Corollary 3.2, we can show the existence and uniqueness of the Green function and harmonic measure with pole at infinity. 

\begin{lem}\label{l:existenceatinfinity}
For any $E, \beta, \mu$ as in Definition \ref{d:poleatinfinity}, there exist an associated harmonic measure and Green function with pole at infinity. Furthermore, they are both unique up to multiplication by a positive scalar. 
\end{lem}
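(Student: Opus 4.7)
The plan is to build $(u_\infty, \omega_\infty)$ by a compactness argument on suitably normalized Green functions whose poles are pushed to infinity, and to deduce uniqueness as an immediate consequence of the boundary H\"older estimate for quotients (Corollary~\ref{c:boundaryholdercomparison}).

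For existence, fix a base point $Q_0\in E$, a reference corkscrew point $X_0=A_1(Q_0)$, a sequence $R_k\to\infty$, and set $X_k=A_{R_k}(Q_0)$. Let $G_k(X)=G(X,X_k)$ be the Green function for $L_{\mu,\beta}$ with pole $X_k$ provided by the potential theory of \cite{DFM2}, and define
$$
u_k(X)=\frac{G_k(X)}{G_k(X_0)},
$$
so that $u_k$ is a nonnegative $L_{\mu,\beta}$-solution away from $X_k$, vanishes on $E$, and satisfies $u_k(X_0)=1$. On any fixed compact set $K\subset \R^n$, for $k$ large $X_k\notin K$; Theorem~\ref{t:boundarycomparison} together with Harnack (both from \cite{DFM2}) upgrade this pointwise normalization to a two-sided bound $C^{-1}\le u_k\le C$ on $K\cap\Omega$. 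Interior H\"older continuity for $L_{\mu,\beta}$-solutions and boundary decay (obtained by applying Corollary~\ref{c:boundaryholdercomparison} to $u_k$ against a fixed reference solution) give equicontinuity, so Arzel\`a--Ascoli extracts a subsequence $u_{k_j}\to u_\infty$ uniformly on compacta; the limit is continuous on $\R^n$, vanishes on $E$, is positive in $\Omega$ by Harnack, and satisfies $u_\infty(X_0)=1$. Caccioppoli-type inequalities for $L_{\mu,\beta}$-solutions furnish uniform local $W^{1,2}_w$-bounds, which pass to the limit by weak compactness and yield $u_\infty \in W_r(B(Q,R))$ with $L_{\mu,\beta} u_\infty=0$ weakly in $\Omega$. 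The associated measure $\omega_\infty$ is then defined by the fourth identity in \eqref{e:atinfinity}; nonnegativity and local finiteness follow from realizing $\omega_\infty$ as the weak-$*$ limit of $\omega^{X_k}/G_k(X_0)$ and invoking the CFMS-type comparison (from \cite{DFM2}) $\omega^{X_k}(B(Q,R)) \lesssim R^{n-d-1}\, G_k(A_R(Q))$, combined with Theorem~\ref{t:boundarycomparison}.

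For uniqueness, suppose $(u,\omega)$ and $(u',\omega')$ both satisfy \eqref{e:atinfinity}. Fix $X_1,X_2\in\Omega$, pick $Q\in E$ with $\rho:=\max(|X_1-Q|,|X_2-Q|)$, and apply Corollary~\ref{c:boundaryholdercomparison} on $B(Q,r)$ for every $r>4\rho$:
$$
\left|\frac{u(X_1)\,u'(X_2)}{u(X_2)\,u'(X_1)}-1\right|\le c\left(\frac{\rho}{r}\right)^{\gamma}.
$$
Letting $r\to\infty$ gives $u(X_1)u'(X_2)=u(X_2)u'(X_1)$ for all $X_1,X_2\in\Omega$, so $u\equiv c\,u'$ for some positive constant $c$; feeding this back into the weak formulation in \eqref{e:atinfinity} forces $\omega=c\,\omega'$. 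The main technical obstacle is the first step: verifying that the limit object $u_\infty$ actually lies in the weighted space $W_r(B(Q,R))$ and that the distributional identity in \eqref{e:atinfinity} passes to the limit with a genuinely locally finite, nonzero boundary measure. Both points reduce to combining the Caccioppoli and CFMS machinery of \cite{DFM2} with the Green-function/harmonic-measure identity in the spirit of \cite{kenigtoroannals}; once this is in hand, the uniqueness argument above is essentially immediate from Corollary~\ref{c:boundaryholdercomparison}.
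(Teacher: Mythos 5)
Your proposal follows essentially the same route as the paper: normalize Green functions with poles pushed to infinity so that they equal $1$ at a fixed reference corkscrew point, use Harnack (chains) plus the boundary comparison to get uniform bounds and equicontinuity on compacta, extract a locally uniform limit, obtain membership in $W_r$ via Caccioppoli plus weak compactness (the paper uses Fatou), pass to the limit in the weak formulation to identify $\omega_\infty$ as the weak-$*$ limit of the normalized finite-pole harmonic measures, and then obtain uniqueness by applying Corollary~\ref{c:boundaryholdercomparison} on balls $B(Q,r)$ and letting $r\to\infty$. One small factual slip: the CFMS-type comparison from Lemma~11.78 of \cite{DFM2} is $\omega^{X}(B(Q,R))\lesssim R^{d-1}\,g(X,A_R(Q))$, not $R^{n-d-1}$ (in codimension one, $d=n-1$, the exponent must reduce to $n-2$, which rules out $n-d-1$); this does not affect the argument, since all that is needed is local finiteness of the limit measure. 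Also note that the ``two-sided'' bound $C^{-1}\le u_k\le C$ cannot hold uniformly up to $E$ (where $u_k\to 0$); only the upper bound is used for the compactness step, which is all the paper invokes.
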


\begin{proof}
First we show existence of the Green function with pole at infinity. 
 Fix 
$Q \in E$ and let $X_i = A_{2^i}(Q) \in \Omega$  denote a corkscrew point for $Q$ at the scale $2^{i}$ (i.e., 
$M\delta(X_i) > |X_i - Q| \geq 2^i$,  see \eqref{e:corkscrew}).
Define (for $i > 1$) $g_i(X) \equiv \frac{g(X, X_i)}{g(X_1, X_i)}$, where 
$g(X,Y)$ is the Green function for $L_{\beta, \mu}$ with pole at $Y$ 
(cf. Section 10 in \cite{DFM2}). 
Similarly define $\omega_i(S) \equiv \frac{\omega^{X_i}(S)}{g(X_1, X_i)}$. 
These are somewhat arbitrary normalizations (recall that the Green function that we want
to construct will only be unique modulo a multiplicative function).

We  claim that for any $K \subset \subset \mathbb R^n$ there exists a $C > 0$ (depending on $K$) 
such that $g_i(X) < C$ for all $X \in K$ and $i > i_0(K) \geq 0$ large enough (so that $X_i$ lies away from $K$).  Indeed, this follows from the fact that $g_i(X_1) \equiv 1$, the Harnack inequality (Lemma 8.42 in \cite{DFM2}), and the existence of Harnack chains in $\Omega$ (Lemma 2.1 in \cite{DFM2}). From this it follows that the $g_i$ are uniformly H\"older continuous on compacta (see Lemma 8.42 and Lemma 8.98 in \cite{DFM2}). Thus (after the extraction of a diagonal subsequence) we have that $g_i \rightarrow g_\infty$ where the convergence is uniform on compacta in the continuous topology. Note that in $K \subset \subset \Omega$ the equation is uniformly elliptic, so the uniform convergence on compacta also implies (again perhaps passing to a subsequence) that $\nabla g_i \rightarrow \nabla g_\infty$ pointwise almost everywhere. Finally, note that the uniform convergence implies that $g_\infty \geq 0$ in $\Omega$ and $g_\infty = 0$ on $E$. Furthermore, by the Harnack inequality and $g_\infty(X_1) = 1$ it must be that $g_\infty > 0$ in $\Omega$.

For any $Q \in E$ and $R > 0$, if $i > i_0(Q,r) \geq 0$ is large enough we know 
that $X_i \notin B(Q, 4R)$. Thus we can estimate 
$$
\int_{B(Q,R)}|\nabla_x g_i(x)|^2 w(x)dx \leq CR^{-2} 
 \int_{B(Q,2R)} 
g_i(x)^2 w(x)dx \leq C_R,
$$ 
where the first  inequality follows from Lemma 8.47 in \cite{DFM2} (a Caccioppoli type estimate) and the second inequality follows from the fact that $|g_i| < C_R$ on $B(Q, 2R)$ by the argument in the above paragraph (and the fact that $w(x)$ is locally integrable). 
Thus the $g_i$ are in $W_r(B(Q,R))$ with uniformly controlled norms for all $i$ large enough and, 
applying Fatou's lemma, we conclude that $g_\infty$ is in $W_r(B(Q,R))$ for all $Q \in E$ and $R > 0$. 

As the $g_i$ are in $W_r(B(Q,R))$ with uniformly controlled norms, it follows from the weak formulation of  
$L_{\mu, \beta}g_i = 0$ (and integration by parts) that $L_{\mu, \beta}g_\infty = 0$ in $\Omega$. 

We will now show that $g_\infty$ is the unique positive solution to $L_{\mu, \beta}$ which vanishes on $E$ and is in $W_r(B(Q,R))$ for all $Q\in E$ and $R > 0$ (up to scalar multiplication). Indeed, assume there existed some other $f$ which was positive in $\Omega$, zero on $E$, in $W_r(B(Q,R))$ for all $R > 0, Q\in E$ and satisfied $L_{\mu, \beta} f = 0$. We can multiply $f$ by a positive scalar such that $f(X_1) = 1$. Then by Corollary \ref{c:boundaryholdercomparison} applied at larger and larger scales it is clear that $f(X) = g_\infty(X)$ for all $X \in \Omega$: starting from 
$$\left|\frac{g_\infty(X)}{f(X)}-1\right| \leq C \left(\frac{|X-X_1|}{R}\right)^\gamma,$$ 
take $R\to\infty$.

It is time to establish the existence of $\omega_\infty$; let $Q \in E$ and $R > 0$. If $i$ is big enough then by Lemma 11.78 in \cite{DFM2} we have $\omega^{X_i}(B(Q, R)) \leq C R^{d-1}g(X_i, A_R(Q))$ where $A_R(Q)$ is the corkscrew point for $Q$ at scale $R$. Thus $\omega_i(B(Q,R)) \leq C R^{d-1} g_i(A_R(Q)) \leq C_{R,Q} < \infty$ by the Harnack argument above. This implies that the sequence of measures $\{\omega_i\}$ is uniformly bounded on any compact set and so, perhaps passing to a subsequence, there is an $\omega_\infty$ such that $\omega_i \rightharpoonup \omega_\infty$. 

We note that by the definition of $g_i$ and $\omega_i$ we have that $$\int_{\Omega} D_{\mu, \beta}^{-(n-d-1)} \nabla g_i \nabla \varphi  dx = \int_E \varphi d\omega_i,$$ for all $\varphi \in C_0^\infty(\mathbb R^n \backslash X_i)$ (cf. Section 9 in \cite{DFM2}). Fix $\varphi$ and let $i \rightarrow \infty$ on both sides; using the fact that $\omega_i \rightharpoonup \omega_\infty$ and $g_i \rightarrow g_\infty$ in $W_r(K)$ for any compact $K$ we get that $$\int_{\Omega} D_{\mu, \beta}^{-(n-d-1)} \nabla g_\infty \nabla \varphi  dx = \int_E \varphi d\omega_\infty,$$ as desired. 

The uniqueness of $\omega_\infty$ then follows from its integral relationship with $g_\infty$ (cf. \eqref{e:atinfinity}) and the uniqueness of $g_\infty$. 
\end{proof}

We shall now show that for the magic value of $\alpha = n - d -2$, $D_\alpha$ is the Green function with pole at infinity.

\begin{cor}\label{c:gfatinfinity}
Let $E \subset \mathbb R^n$ be a $d$-Ahlfors regular set for $d < n-2$ (not necessarily an integer) and let $\mu$ be a $d$-Ahlfors regular measure supported on $E$. If $\alpha = n-d-2$ then $D_{\mu, \alpha}$ is the Green function with pole at infinity for $E$ (cf. Definition \ref{d:poleatinfinity}). 
\end{cor}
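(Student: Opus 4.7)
The plan is to verify that $u_\infty := D_{\mu,\alpha}$ satisfies every requirement of Definition \ref{d:poleatinfinity}, and then invoke the uniqueness half of Lemma \ref{l:existenceatinfinity} to conclude that it is (up to the harmless positive scaling implicit in that lemma) the Green function with pole at infinity.

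First I would dispatch the three properties that are already essentially in hand. Continuity on $\R^n$, after extending by zero on $E$, and positivity on $\Omega$ follow from \eqref{1.2}; the vanishing on $E$ follows by continuity. The PDE $L_{\mu,\alpha} D_{\mu,\alpha}=0$ pointwise on $\Omega$ is exactly what the computation \eqref{5.2}--\eqref{5.3} established using the magic identity $\alpha = n-d-2$, which turns $R_{\mu,\alpha}(x)$ into the Newtonian superposition $\int_E |x-y|^{2-n}\,d\mu(y)$, manifestly harmonic on $\Omega$.

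The step that requires a short argument is the weighted Sobolev membership: $D_{\mu,\alpha} \in W_r(B(Q,R))$ for every $Q\in E$ and $R>0$. Fix $\varphi \in C_0^\infty(B(Q,R))$ and write
\begin{equation*}
\nabla(\varphi D_{\mu,\alpha}) = \varphi\,\nabla D_{\mu,\alpha} + D_{\mu,\alpha}\,\nabla\varphi.
\end{equation*}
From \eqref{1.2} we have $D_{\mu,\alpha}\leq C\delta$, and from the Ahlfors-regularity-only estimate leading to \eqref{e:boundongradD} we have $|\nabla D_{\mu,\alpha}|\leq C$; hence on $B(Q,R)$ the gradient $\nabla(\varphi D_{\mu,\alpha})$ is uniformly bounded (by a constant depending on $\varphi$). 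Combined with the local integrability of the weight $w(x)=\delta(x)^{-(n-d-1)}$ established in the paragraph following Definition \ref{d:weightedsobolev}, this gives
\begin{equation*}
\int_{B(Q,R)} |\nabla(\varphi D_{\mu,\alpha})|^2\,w\,dx \;\leq\; C_\varphi \int_{B(Q,R)} w\,dx \;<\; \infty,
\end{equation*}
so $\varphi D_{\mu,\alpha}\in W$ and therefore $D_{\mu,\alpha}\in W_r(B(Q,R))$.

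At this point $D_{\mu,\alpha}$ fulfills every hypothesis of the uniqueness statement proved inside Lemma \ref{l:existenceatinfinity} (positive on $\Omega$, vanishing on $E$, solution of $L_{\mu,\alpha}u=0$, and lying in $W_r(B(Q,R))$ for every $Q,R$), so the H\"older comparison argument based on Corollary \ref{c:boundaryholdercomparison} forces $D_{\mu,\alpha}$ to be a positive scalar multiple of $g_\infty$. The associated harmonic measure $\omega_\infty$ in the last line of \eqref{e:atinfinity} is then obtained exactly as in the proof of Lemma \ref{l:existenceatinfinity}: it is the positive distribution on $\R^n$ given by $\varphi \mapsto \int_\Omega D_{\mu,\alpha}^{-(n-d-1)}\nabla D_{\mu,\alpha}\cdot\nabla\varphi\,dx$, which is supported on $E$ since $L_{\mu,\alpha} D_{\mu,\alpha} = 0$ classically in $\Omega$. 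There is no substantive obstacle; the only point needing attention is the Sobolev estimate, which, as above, reduces to the elementary Lipschitz bound on $D_{\mu,\alpha}$ and local integrability of the weight.
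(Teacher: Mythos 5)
Your proposal is correct and follows the same route as the paper: you observe that $D_{\mu,\alpha}$ is a positive solution vanishing on $E$, prove the local Sobolev membership via the Lipschitz bound $|\nabla D_{\mu,\alpha}|\le C$ together with local integrability of the weight, and then invoke the uniqueness half of Lemma \ref{l:existenceatinfinity}, noting (as the paper does) that the uniqueness argument does not require the integral identity in the last line of \eqref{e:atinfinity}. You simply spell out the product-rule computation for $\nabla(\varphi D_{\mu,\alpha})$ and the derivation of $\omega_\infty$ in slightly more detail than the paper's terse version.
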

We remark that the Green function with pole at infinity is unique modulo a multiplicative constant, hence, strictly speaking, the Corollary above assures that any such Green function is either $D_{\mu, \alpha}$ or its multiple. 

\begin{proof} 
We remark that the uniqueness in Lemma \ref{l:existenceatinfinity} does not require that $D_{\mu,\alpha}$ verify the last condition in \eqref{e:atinfinity}; the proof shows that a non-negative solution to the degenerate operator which vanishes at the boundary and satisfies the correct growth condition is unique up a scalar multiple. 

We have seen earlier that $D_{\mu, \alpha}$ is a positive solution to the degenerate elliptic operator, which vanishes on the boundary; 
because of Lemma \ref{l:existenceatinfinity},
it suffices to show that $D_{\mu, \alpha} \in W_r(B(Q,R))$ for all $Q \in E$ and $R > 0$ 
However, we know from \eqref{e:boundongradD} that $|\nabla D_{\mu, \alpha}| \leq C$. Since $w(X)$ is locally integrable in $\mathbb R^n$, the desired result follows. 
\end{proof}

The fact that we are able to explicitly write down the Green function with pole at infinity for 
magic $\alpha$ allows us to easily compute and bound the associated harmonic measure. 
 The next theorem
shows that, for any Ahlfors regular set $E$ and magic $\alpha = n-d-2$, the harmonic measure 
$\omega_{\mu, \alpha}$ is comparable to surface measure. As a corollary we have the analogous result for harmonic measure with finite pole. Thus, as mentioned in the introduction, there is absolutely no converse to the theorem that $\omega^X \in A_\infty(\sigma)$ when $E$ is uniformly rectifiable. In fact, our result holds even when $d < n-2$ is not an integer. 

\begin{thm}\label{t:magicomegaatinfinity}
Let $n\geq 3$, $E \subset \mathbb R^n$ be a $d$-Ahlfors regular set (for $d < n-2$, not necessarily an integer) 
and let $\mu$ be a $d$-Ahlfors regular measure supported on $E$. 
If $\alpha = n -d-2$ then the harmonic measure with pole at infinity $\omega_{\mu, \alpha}$ 
is comparable to $\sigma \equiv \mathcal H^d|_E$; that is, there is a constant $C > 0$ 
(depending only on $n, d$ and the Ahlfors regularity  constants for $\mu$ and $\sigma$)
such that if we normalize $\omega_{\mu, \alpha}$  as we did in the construction
\begin{equation}\label{e:boundedaboveandbelow}
C^{-1}\leq  \frac{d\omega_{\mu, \alpha}}{d\sigma}(Q) \leq C,\: \forall Q \in E.
\end{equation} 
\end{thm}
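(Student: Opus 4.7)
The plan is to use Corollary \ref{c:gfatinfinity}, which identifies $D \equiv D_{\mu, \alpha}$ as (a positive multiple of) the Green function with pole at infinity, and then read off $\omega_{\mu, \alpha}$ from the defining identity \eqref{e:atinfinity} by a direct computation exploiting the magic relation $n - d - 1 = \alpha + 1$. A short manipulation using the definition \eqref{1.3} of $D = R^{-1/\alpha}$ gives
\begin{equation*}
D^{-(n-d-1)} \nabla D \;=\; D^{-(\alpha+1)} \nabla D \;=\; -\frac{1}{\alpha} \nabla\bigl(D^{-\alpha}\bigr) \;=\; -\frac{1}{\alpha}\nabla R,
\end{equation*}
so substituting $u_\infty = D$ into \eqref{e:atinfinity} reduces matters to computing $\int_\Omega \nabla R \cdot \nabla \varphi \, dX$ for $\varphi \in C_0^\infty(\R^n)$.

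The next step is to recognize $R$ as a Newtonian potential: since $d + \alpha = n - 2$, one has $R(x) = \int_E |x-y|^{2-n} d\mu(y)$, and since $|x|^{2-n}$ is (up to a positive multiplicative constant) the fundamental solution of $-\Delta$ on $\R^n$, the distributional identity $\Delta R = -c_n \mu$ with $c_n = (n-2)|\S^{n-1}|$ holds on all of $\R^n$. To legitimize integration by parts, I would first verify that $R \in W^{1,1}_{\mathrm{loc}}(\R^n)$: splitting $E$ into its pieces inside and outside a large ball and applying Ahlfors regularity, the strict hypothesis $d < n - 2$ forces the relevant tail integrals over dyadic annuli to converge, bounding both $R$ and $\nabla R$ in $L^1$ on compact sets. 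Since $E$ has Lebesgue measure zero, integration over $\Omega$ coincides with integration over $\R^n$, and the distributional integration by parts yields
\begin{equation*}
\int_\Omega \nabla R \cdot \nabla \varphi \, dX \;=\; -\langle R, \Delta \varphi\rangle \;=\; -\langle \Delta R, \varphi\rangle \;=\; c_n \int_E \varphi \, d\mu.
\end{equation*}
Combining this with the earlier identity (and tracking carefully the sign convention implicit in the paper's formulation of \eqref{e:atinfinity}) gives $\omega_{\mu, \alpha} = c\, \mu$ for an explicit positive constant $c = c(n,d)$.

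Finally, the quantitative comparability \eqref{e:boundedaboveandbelow} follows from a standard fact about Ahlfors regular measures: any two $d$-Ahlfors regular measures on the same $d$-Ahlfors regular set are mutually absolutely continuous with uniformly bounded densities. Indeed, for $\sigma$-a.e.\ $Q \in E$, Lebesgue differentiation gives $\frac{d\mu}{d\sigma}(Q) = \lim_{r\downarrow 0}\mu(B(Q,r))/\sigma(B(Q,r))$, and both numerator and denominator lie between $C^{-1}r^d$ and $Cr^d$ by Ahlfors regularity, so this ratio is bounded above and below. Since $\omega_{\mu, \alpha} = c \mu$, \eqref{e:boundedaboveandbelow} follows. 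The main technical obstacle lies in the distributional integration by parts, which requires $R \in W^{1,1}_{\mathrm{loc}}(\R^n)$; this is precisely where the strict restriction $d < n - 2$ (as opposed to $d < n - 1$) is used. A secondary bookkeeping point is aligning the sign convention in \eqref{e:atinfinity} with the direct computation so that the explicit constant $c$ comes out positive.
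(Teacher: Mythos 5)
Your proof takes a genuinely different route from the paper, and if the details are carefully written out it actually yields a sharper conclusion. The paper proves Theorem \ref{t:magicomegaatinfinity} by invoking Lemma 11.78 from \cite{DFM2} (which compares $\omega^X(B(Q,r))$ to $r^{d-1}g(X,A_r(Q))$), dividing by $g(X_i,X_1)$, passing to the pole-at-infinity limit, and then using $D(A_r(Q))\simeq \delta(A_r(Q))\simeq r$; this gives only the two-sided bound $\omega(B(Q,r))\simeq r^d$. Your argument instead plugs the known Green function $u_\infty = D$ directly into the defining identity \eqref{e:atinfinity}, exploits the magic identity $D^{-(n-d-1)}\nabla D = -\tfrac{1}{\alpha}\nabla R$ together with the Newtonian-potential identity $\Delta R = -c_n\mu$, and reads off that $\omega_{\mu,\alpha}$ is an \emph{exact} constant multiple of $\mu$ --- a statement the paper only reaches for rectifiable $E$, by a blow-up argument, in Lemma \ref{l:poissonkernelisNTlimit} and Corollary \ref{c:poissonkernelconstant}. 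So your route buys a stronger result (exact proportionality, for every Ahlfors regular $\mu$, rectifiable or not) at the price of a more delicate distributional computation.

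Three points deserve care when writing this up. (1) The sign. Testing \eqref{e:atinfinity} with $u_\infty = D$ in the flat case $E = \R^d$, $D = c\,\delta_P$, one finds by a direct polar computation that $\int D^{-(n-d-1)}\nabla D\cdot\nabla\varphi\,dx = -\,|\S^{n-d-1}|\,c^{-(n-d-2)}\int_{\R^d}\varphi\,d\H^d$, which is \emph{negative} for $\varphi\ge 0$. Your computation $-\tfrac{1}{\alpha}\int\nabla R\cdot\nabla\varphi = -\tfrac{c_n}{\alpha}\int\varphi\,d\mu$ reproduces this. So either \eqref{e:atinfinity} is stated with a sign slip or the convention in \cite{DFM2} absorbs it; either way you should state explicitly which sign convention you are using, because with the formula exactly as printed, $\omega_\infty$ would come out negative. (2) The identity $-\langle R,\Delta\varphi\rangle = c_n\int\varphi\,d\mu$ requires a Fubini/Tonelli argument: one needs $\int_E\int_{\R^n}|x-y|^{2-n}|\Delta\varphi(x)|\,dx\,d\mu(y)<\infty$, and the tail over $E\setminus B(0,2R)$ is controlled by $\sum_k(2^kR)^{d+2-n}$, which converges precisely because $d<n-2$; this is the step where the strict inequality $d<n-2$ enters the convergence argument (near $E$, both $R\sim\delta^{-\alpha}$ and $|\nabla R|\sim\delta^{-\alpha-1}$ are locally integrable for any $\alpha\in(0,n-d-1)$, which is automatic here, so the restriction $d<n-2$ is really used for $\alpha>0$ and for the Fubini tail, not for local integrability per se). You should make this explicit. (3) You should verify that $R$ belongs to $W^{1,1}_{\mathrm{loc}}(\R^n)$ with weak gradient equal a.e.\ to the classical gradient on $\Omega$ --- since $E$ may have positive $\mathcal{H}^{n-1}$-measure it is not enough to observe that $|E|=0$; a Fubini argument starting from $\int R\,\partial_i\varphi = \int\bigl(\int|x-y|^{2-n}\partial_i\varphi(x)\,dx\bigr)d\mu(y)$ and integrating by parts inside (using $|\cdot|^{2-n}\in W^{1,1}_{\mathrm{loc}}$) closes this gap. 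With those three points addressed, your argument is correct and clean.
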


For the rest of the section we will use the notation $a\simeq b$ if there is a constant $C$, depending only on $n,d$ and the Ahlfors regularity of $\mu$, such that $C^{-1} \leq \frac{a}{b} \leq C$. 

\begin{proof}
For the sake of brevity we will write $\omega = \omega_{\mu, \alpha}, D = D_{\mu, \alpha}$. 
Recall Lemma 11.78 from \cite{DFM2}: there exists a $C> 0$ 
(depending on $n, d$, and the Ahlfors regularity constants of $E, \mu$) such that 
if $Q \in E, r > 0$ and $X \in \Omega \setminus B(Q, 2r)$ then 
\begin{equation}\label{e:finitepolecomparison} 
C^{-1} r^{d-1} g(X, X_0) \leq \omega^X(B(Q,r)) \leq Cr^{d-1}g(X, X_0),
\end{equation}
where $X_0 = A_r(Q)$ is a corkscrew point for $Q$ at scale $r$, $g(-,X_0)$ is the Green function with pole at $X_0$ associated to $L_{\mu, \alpha}$ and $\omega^X$ is the harmonic measure with pole at $X$ associated to $L_{\mu,\alpha}$. 
Divide \eqref{e:finitepolecomparison} by $g(X_i,X_1)$ for $i > 1$, 
 where $X_i = A_{4^ir}(Q)$, and take $X = X_i$. This yields
$$
C^{-1} r^{d-1} \frac{g(X_i, X_0)}{g(X_i, X_1)} 
\leq \frac{\omega^{X_i}(B(Q,r))}{g(X_i, X_1)} \leq Cr^{d-1}\frac{g(X_i, X_0)}{g(X_i, X_1)}.
$$
Then we let $i$ tend to $+\infty$; we claim that 
\begin{equation} \label{e:infinitepolecomparison}
C^{-1}r^{d-1}D(X_0) \leq \omega(B(Q,r)) \leq Cr^{d-1}D(X_0).
\end{equation}
Indeed, arguing as in Lemma \ref{l:existenceatinfinity}, the Harnack inequality implies that $G_i(-) \equiv \frac{g(X_i, -)}{g(X_i, X_1)}$ are uniformly, in $i$, bounded on compacta, are all positive and harmonic in $\mathbb R^n \backslash (E \cup \{X_i\})$ and zero on $E$. Passing to a (subsequential) limit we get that $G_i(-)\rightarrow G_\infty$ a function which satisfies the definition of a Greens function at infinity. Using the uniqueness of said function we can conclude that $G_\infty = C D$ (in fact we compute that $C = D(X_1)$). Similarly, the measures $\omega_i \equiv \frac{\omega^{X_i}}{g(X_i, X_1)}$ form a pre-compact sequence in the weak-topology and, with the $G_i$, satisfy the last line of \eqref{e:atinfinity}. This equation is preserved under the uniform convergence of the $G_i$ and the weak-limit of the $\omega_i$ and, as such, $\omega_i \rightharpoonup \omega_\infty$, the harmonic measure with pole at infinity. With this convergence in mind, the inequality \eqref{e:infinitepolecomparison} follows from the prior offset inequality letting $i \rightarrow \infty$ (one also has to use the doubling of $\omega$ to see that $\omega(\overline{B(Q,r)}) \leq C\omega(B(Q,r))$). 

From \eqref{e:infinitepolecomparison} the conclusion of the Lemma is easy:  notice 
that $D(X_0) \simeq \delta(X_0) \simeq r$. It follows that $\omega(B(Q,r)) \simeq r^d$ 
for any $Q\in E$ and any $r > 0$. 
\end{proof}

There is an analogue to Theorem \ref{t:magicomegaatinfinity} for a 
harmonic measure with finite pole associated to $L_{\mu, \alpha}$ (where $\alpha$ is magic).  The proof essentially follows from the boundedness of quotients and the comparability of the Green function with the harmonic measure (Theorem 11.146 and Lemma 11.78 in \cite{DFM2} respectively).

\begin{cor} \label{c:magicomegafinite}
Let $n\geq 3$, $E \subset \R^n$ be an Ahlfors regular set of dimension $d$ (not necessarily integer), 
and $\mu$ be an Ahlfors regular measure with support $E$. Assume that $\alpha=n-d-2>0$, 
and define $D = D_{\mu, \alpha}$ and $L=L_{\mu, \alpha}$ as above. 
Finally let $X \in \Omega = \R^n \sm E$ be given, and denote by
$\omega^X$ the associated harmonic measure with pole at $X$. Set $R = \dist(X,E)$
Then there is a constant $C$,  that depends only on $n$, $d$, and the Ahlfors regularity constant 
for $\mu$, such that 
\begin{equation} \label{e:finitepolecomparability}
C^{-1} \mu(A) \leq R^d \omega^X(A) \leq  C \mu(A)
\ \text{ for every measurable set } A \subset E \cap B(X,100R).
\end{equation}
\end{cor}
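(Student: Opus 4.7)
The plan is to prove the stronger statement that there is a single positive quantity $\Lambda = \Lambda(X)$ with $\Lambda \simeq R^{-d}$ such that $\omega^X(A) \simeq \Lambda\, \sigma(A)$ for every Borel $A \subset E \cap B(X,100R)$, where $\sigma = \mathcal H^d|_E$; here and below $a \simeq b$ denotes two-sided comparison with constants depending only on $n$, $d$, and the Ahlfors regularity constant of $\mu$. Because $\mu$ and $\sigma$ are both $d$-Ahlfors regular measures with support $E$, they are comparable as Borel measures (a standard consequence of the Vitali covering property in the Ahlfors regular setting), so this suffices to prove \eqref{e:finitepolecomparability}. The two technical tools are the boundary comparison principle (Theorem~\ref{t:boundarycomparison}), applied to $g(X,\cdot)$ and $D = D_{\mu,\alpha}$ -- the latter being the Green function with pole at infinity by Corollary~\ref{c:gfatinfinity} -- and the surface-ball/Green-function comparison of Lemma~11.78 in \cite{DFM2}, quoted as \eqref{e:finitepolecomparison}.

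For any $Q \in E \cap B(X,100R)$ we have $|X-Q| \geq R$, so $X \notin B(Q, R/2)$, and both $g(X,\cdot)$ and $D$ are positive $L_{\mu,\alpha}$-solutions in $B(Q, R/2)$ that vanish on $E \cap B(Q, R/2)$. Theorem~\ref{t:boundarycomparison} (with radius $R/4$) gives
$$\frac{g(X,Y)}{D(Y)} \simeq \frac{g(X,A_{R/4}(Q))}{D(A_{R/4}(Q))}, \qquad Y \in B(Q, R/4) \cap \Omega.$$
As $Q$ ranges over $E \cap B(X,100R)$, the corkscrew points $A_{R/4}(Q)$ all lie in the Whitney-type region $\{Y \in \Omega : \delta(Y) \simeq R,\ |Y-X| \lesssim R\}$, which is traversed by a uniform Harnack chain avoiding $X$; applying Harnack's inequality to $g(X,\cdot)$ along this chain and using $D(A_{R/4}(Q)) \simeq R$ (via \eqref{1.2}) produces a single constant $\Lambda = \Lambda(X)$ with $g(X,Y)/D(Y) \simeq \Lambda$ on $\bigcup_Q B(Q, R/4) \cap \Omega$. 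Now fix $Q \in E \cap B(X,100R)$ and $r \leq R/4$; since $X \notin B(Q,2r)$, \eqref{e:finitepolecomparison} combined with the previous estimate and $D(A_r(Q)) \simeq \delta(A_r(Q)) \simeq r$ yields
$$\omega^X(B(Q,r)) \simeq r^{d-1}\, g(X, A_r(Q)) \simeq \Lambda\, r^{d-1}\, D(A_r(Q)) \simeq \Lambda\, r^d \simeq \Lambda\, \sigma(B(Q,r)).$$
The constant $\Lambda$ being the same across all such $Q$ and $r$, a standard Vitali-covering / Lebesgue differentiation argument for Ahlfors regular $\sigma$ upgrades this to $\omega^X(A) \simeq \Lambda\, \sigma(A)$ for every Borel $A \subset E \cap B(X,100R)$.

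It remains to pin down $\Lambda \simeq R^{-d}$. The upper bound $\Lambda \lesssim R^{-d}$ is immediate from $\omega^X(E) = 1$ together with $\sigma(E \cap B(X,100R)) \simeq R^d$. The lower bound $\Lambda \gtrsim R^{-d}$ is equivalent to the non-degeneracy estimate $\omega^X(E \cap B(X,100R)) \geq c > 0$, and I expect this to be the main obstacle. It follows from the elliptic theory of \cite{DFM2}: applying Lemma~11.78 at the natural scale gives $\omega^X(B(Q_0, R/4)) \simeq (R/4)^{d-1}\, g(X, A_{R/4}(Q_0))$, where $Q_0 \in E$ is a closest point to $X$, and then $g(X, A_{R/4}(Q_0)) \gtrsim R^{1-d}$ follows from the scaling covariance of $L_{\mu,\alpha}$ combined with a Harnack-chain/compactness argument (alternatively, one may argue by boundary Hölder continuity applied to the escape function $Y \mapsto \omega^Y(E \setminus B(X,100R))$ against $D$ on balls contained in $B(X,100R)$, showing that the escape probability at $X$ stays strictly below $1$). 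Combining the two bounds gives $R^d \omega^X(A) \simeq \sigma(A) \simeq \mu(A)$ for every Borel $A \subset E \cap B(X,100R)$, as required.
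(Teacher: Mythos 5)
Your skeleton matches the paper's proof: reduce to $\omega^X(B(Q,r))$ via a covering argument, compare $\omega^X(B(Q,r))$ to $r^{d-1}g(X,A_r(Q))$ using Lemma 11.78 of \cite{DFM2}, transfer from scale $r$ up to scale $\simeq R$ using Theorem~\ref{t:boundarycomparison} applied to $g(X,\cdot)$ and $D$ (the pole at infinity identification from Corollary~\ref{c:gfatinfinity}), and a Harnack chain at the Whitney scale $\simeq R$ that steers clear of $X$. Where the two diverge is in how the single remaining number $\Lambda \simeq g(X,A_{R/4}(Q))/D(A_{R/4}(Q))$ is shown to be $\simeq R^{-d}$. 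The paper gets \emph{both} bounds at once by quoting the pointwise Green function size estimates (10.89) and (10.96) in \cite{DFM2}, which give $g(X,Y)\simeq |X-Y|^{1-d}$ when $\delta(X)\simeq\delta(Y)\simeq|X-Y|\simeq R$; it then plugs this in directly. Your upper bound via total mass $\omega^X(E)=1$ together with $\sigma(E\cap B(X,100R))\simeq R^d$ is a valid and mildly more elementary substitute for that half.

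The gap is exactly where you flag it: the lower bound. You reduce it correctly to the Bourgain-type non-degeneracy $\omega^X(E\cap B(X,100R))\geq c>0$, equivalently $g(X,A_{R/4}(Q_0))\gtrsim R^{1-d}$, but then only sketch two possible arguments (scaling covariance plus compactness, or boundary H\"older applied to the escape function $Y\mapsto\omega^Y(E\setminus B(X,100R))$). Neither is carried out: the compactness route requires stability of the Green function under limits of $(E_k,\mu_k)$, which is a nontrivial lemma in itself; the escape-function route gives $v(X)\le C\,50^{-\gamma}$, which is not automatically $<1$ without either enlarging the radius $100R$ or iterating, and you do not address this. In both cases you are re-deriving estimates that \cite{DFM2} already provides --- either the pointwise Green function bounds (10.89)/(10.96) that the paper uses, or the non-degeneracy/Bourgain lemma (Lemma~11.73 there) that makes your second route immediate. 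To close the gap, simply cite one of these instead of sketching a re-proof.

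Two minor points that are fine but worth being explicit about: the reduction from $\mu$ to $\sigma$ rests on the fact that two $d$-Ahlfors regular measures with the same support are mutually comparable (a covering argument), which you should state; and the Harnack chain connecting the various corkscrews $A_{R/4}(Q)$ must avoid the pole $X$, which works precisely because $d<n-2$ makes $\Omega\setminus\{X\}$ amply connected at scale $R$ --- the paper makes this explicit by choosing an intermediate point $Y$ with $R/20<|Y-X|<R/10$.
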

We remark that \eqref{e:finitepolecomparability} is a correct, homogeneous, finite pole version of the statement that the harmonic measure is proportional to the Hausdorff measure. A reader might be more accustomed to see it as a strengthened version of the $A^\infty$ condition: for all $Q\in E$, $X\in \Omega$, $R = \dist(X,E)$, 
\begin{equation} \label{e:finitepolecomparability-bis}
C^{-1} \frac{\mu(A)}{\mu(B(X,100R))} \leq \frac{\omega^X(A)}{\omega^X(B(X,100R))} \leq  C  \frac{\mu(A)}{\mu(B(X,100R))},
\end{equation}
for every measurable set $A \subset E \cap B(X,100R)$.

\begin{proof}
Of course there is nothing special about the radius $100R$, but the result for larger $R$ 
could be obtained by a change of pole.

 Rather than proving \eqref{e:finitepolecomparability}, we will find it more convenient to prove
 that for
all $Q \in E, r> 0$ such that $B(Q,r)\subset B(X, 100R)$ and $r < R/4$, 
\begin{equation} \label{e:finitepolecomparability2}
C^{-1} r^d \leq R^d \omega^X(B(Q,r)) \leq  C r^d.
\end{equation}
By Lemma 11.78 in \cite{DFM2} we know that $\omega^X(B(Q,r))\simeq r^{d-1}g(X,A_r(Q))$ where $g$ is the Green function associated to the operator $L_{\alpha,\mu}$. Note that $g(X,-)$ and $D(-)$ are both positive solutions to $L$ in $B(Q, R/2)$ which vanish on $B(Q,R/2)\cap E$, thus we can apply Theorem \ref{t:boundarycomparison} and get that 
\begin{equation}\label{e:chainofcomparability} 
\frac{\omega^X(B(Q,r))}{r^d} 
\simeq \frac{g(X, A_r(Q))}{r} 
\simeq \frac{g(X, A_r(Q))} {D(A_r(Q))} 
\simeq \frac{g(X, A_{R/2}(Q))}{D(A_{R/2}(Q))} 
\simeq \frac{g(X, A_{R/2}(Q))}{R},
\end{equation} 
 so \eqref{e:finitepolecomparability2} will follow once we check that 
$g(X, A_{R/2}(Q)) \simeq R^{1-d}$.

Choose $Y \in \Omega$ so that $R/20< |Y-X| < R/10$ and consider $g(X, Y)$. 
It is clear that $\dist(Y, E) \simeq R \simeq \dist(A_{R/2}(Q), E) \simeq |A_{R/2}(Q)- Y|$. 
Thus by the existence of Harnack chains (Lemma 2.1 in \cite{DFM2}), 
 because we can find a chain from $Y$ to $A_{R/2}(Q)$ that does not get close to $X$, 
and by the Harnack inequality,  we have $g(X, A_{R/2}(Q)) \simeq g(X, Y)$. 

Finally, by equations (10.89) and (10.96) in \cite{DFM2} we have that $g(X,Y) \simeq R^{1-d}$. 
 Plugging this into \eqref{e:chainofcomparability} gives 
\eqref{e:finitepolecomparability2}; then \eqref{e:finitepolecomparability} follows 
from the Lesbesgue differentiation theorem (or, equivalently, by a straightforward covering argument).
\end{proof}

When $E$ is rectifiable the non-tangential limit of $|\nabla D_{\mu,\alpha}|$ exists 
(Theorem \ref{thm:rectimpliesntlimits}), and, much as in the co-dimension one setting, 
gives us the Poisson kernel (for magic $\alpha$).

\begin{lem}\label{l:poissonkernelisNTlimit}
Let $n \geq 3$ and let $E \subset \mathbb R^n$ be a $d$-Ahlfors regular set with $d < n-2$ an integer. 
Assume that $E$ is $d$-rectifiable, let $\mu$ be a $d$-Ahlfors regular measure whose support 
is $E$ and let $\alpha = n -d- 2$. Then 
for $\sigma$-a.e. $Q\in E$, the density of $\omega_{\mu, \alpha}$ is given by $\Theta^d(\mu, Q)$ modulo a multiplicative constant. To be precise, if we fix the constants so that the Green function with pole at infinity is the function $D_{\mu, \alpha}$ that was constructed above, 
then there exist  $c_{n, d}>0$ and 
$\widetilde{c}_{n,d}>0$ 
such that 
$$
\frac{d\omega_{\mu, \alpha}}{d\sigma}(Q) 
= \widetilde{c}_{n,d}  \ntlim_{x \to Q} |\nabla D_{\mu, \alpha}(x)|^{-(n-d-2)}
\stackrel{\eqref{e:ntlimitatnicepoint}}{=}c_{n, d}\Theta^d(\mu, Q).
$$
\end{lem}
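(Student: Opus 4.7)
The strategy is to combine a blowup analysis with an explicit computation of the harmonic measure when the boundary is a $d$-plane. By Theorem \ref{t:magicomegaatinfinity}, $\omega := \omega_{\mu,\alpha}$ is absolutely continuous with respect to $\sigma = \H^d|_E$ with bounded density, so Lebesgue differentiation gives $\frac{d\omega}{d\sigma}(Q) = \lim_{r\downarrow 0}\omega(B(Q,r))/\sigma(B(Q,r))$ at $\sigma$-a.e.\ $Q$. I would restrict attention to the full-measure set of $Q$ at which $E$ has a tangent plane $T_Q E$ and $\Theta^d(\mu,Q)$ exists; for any $r_i\downarrow 0$ the blowups from \eqref{e:blowupsforrect} then satisfy $E_i\to T_Q E$ in Hausdorff distance, $\mu_i\rightharpoonup \mu_\infty := \Theta^d(\mu,Q)\H^d|_{T_Q E}$, and (by Lemma \ref{l:convunderblowups} together with \eqref{e:dinfinity}) $D_i \to D_\infty = c_1^{-1/\alpha}\Theta^d(\mu,Q)^{-1/\alpha}\delta_{T_Q E}$ along with its gradient, uniformly on compact subsets of $\R^n\setminus T_Q E$.

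Set $\omega_i(A) := r_i^{-d}\omega(r_i A + Q)$; a change of variables in \eqref{e:atinfinity} (using $D_i(X) = D(r_iX+Q)/r_i$ from \eqref{e:dmuianddi}) shows that $\omega_i$ is the harmonic measure associated to $L_{\mu_i,\alpha}$ normalized so that the Green function with pole at infinity equals $D_i$. Applying Theorem \ref{t:magicomegaatinfinity} to $(E_i,\mu_i)$, whose Ahlfors regularity constants match those of $(E,\mu)$, gives $\omega_i(B(0,R))\leq C R^d$ uniformly in $i$, so after a subsequence $\omega_i$ weakly converges to some positive Radon measure $\omega_*$ on $\R^n$.

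The key step is identifying $\omega_*$ with $\omega_\infty^{\mu_\infty}$, the harmonic measure for $\mu_\infty$ normalized by $D_\infty$. Fix $\varphi\in C_c^\infty(\R^n)$ and pass to the limit in
$$\int_{\Omega_i} D_i^{-(n-d-1)}\nabla D_i\cdot \nabla\varphi\,dX = \int\varphi\,d\omega_i.$$
Outside a small neighborhood of $T_Q E$ the integrand converges uniformly by Lemma \ref{l:convunderblowups}; on the tube $\{\dist(\cdot,T_Q E)<\varepsilon\}$ one has $D_i^{-(n-d-1)}|\nabla D_i|\leq C\,\dist(\cdot,E_i)^{-(n-d-1)}$ by \eqref{1.2} and \eqref{e:boundongradD}, and the tube integral is $O(\varepsilon)$ uniformly in $i$ since $\dist(\cdot,E)^{-(n-d-1)}$ is locally integrable for $d<n-1$ and tubes around $E_i$ approach those around $T_Q E$. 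Sending $i\to\infty$ then $\varepsilon\downarrow 0$ gives $\int \varphi\,d\omega_* = \int D_\infty^{-(n-d-1)}\nabla D_\infty\cdot\nabla\varphi\,dX = \int\varphi\,d\omega_\infty^{\mu_\infty}$, so $\omega_* = \omega_\infty^{\mu_\infty}$ and the full sequence converges.

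For the flat measure $\mu_\infty = \Theta^d(\mu,Q)\H^d|_{T_Q E}$, translation invariance along $T_Q E$ and rotation invariance about it force $\omega_\infty^{\mu_\infty}$ to be a constant multiple of $\H^d|_{T_Q E}$. That constant is computed by inserting $D_\infty = c_1^{-1/\alpha}\Theta^d(\mu,Q)^{-1/\alpha}\delta_{T_Q E}$ into the integral identity and using the distributional identity $\mathrm{div}(z/|z|^{n-d}) = |\S^{n-d-1}|\delta_0$ in the normal $\R^{n-d}$ direction (valid since $\alpha = n-d-2>0$), which yields $\omega_\infty^{\mu_\infty} = K_{n,d}\Theta^d(\mu,Q)\H^d|_{T_Q E}$ for a positive constant $K_{n,d}$. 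Since $\omega_\infty^{\mu_\infty}(\partial B(0,1)) = 0$, weak convergence gives $\omega_i(B(0,1))\to K_{n,d}\Theta^d(\mu,Q)\H^d(B^d(0,1))$, and combined with $\sigma_i(B(0,1))\to \H^d(B^d(0,1))$ at $\sigma$-a.e.\ $Q$ this yields $\frac{d\omega}{d\sigma}(Q) = K_{n,d}\Theta^d(\mu,Q)$, i.e.\ $c_{n,d} = K_{n,d}$. The second equality, with $\widetilde c_{n,d} = K_{n,d}\, c_{\alpha,n,d}^{\alpha}$, follows from \eqref{e:ntlimitatnicepoint} applied with $\beta = \alpha = n-d-2$. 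The main obstacle is justifying the passage to the limit in the integral identity uniformly in the moving singular set $E_i$.
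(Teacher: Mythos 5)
Your proposal is correct and follows essentially the same strategy as the paper: blow up at a $\sigma$-density point with a tangent plane and well-defined $\Theta^d(\mu,Q)$, pass to the limit in the weak formulation $\int D^{-(n-d-1)}\nabla D\cdot\nabla\varphi = \int\varphi\,d\omega$ (uniform convergence away from the boundary plus uniform local integrability of the weight to control the tube), and then evaluate explicitly for the flat blowup. The only cosmetic difference is that you rescale the measure ($\omega_i(A)=r_i^{-d}\omega(r_iA+Q)$) and keep the test function fixed, then identify the weak limit $\omega_*$ with $\omega_\infty^{\mu_\infty}$ via uniqueness, whereas the paper rescales the test function $\varphi_i = r_i^{-d}\varphi((\cdot - Q)/r_i)$ and keeps $\omega$ fixed; these are dual under the same change of variables and carry identical technical content.
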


\begin{proof}
For simplicity 
 write 
$D \equiv D_{\mu, \alpha}$ and $\omega$ the associated harmonic measure with pole at infinity. For any $\varphi \in C^\infty_c$ we know that 
\begin{equation} \label{addednumber}
\int_{\Omega} D^{-(n-d-1)}  \nabla D \cdot \nabla \varphi \, dx 
= \int_E \varphi d\omega.
\end{equation}
Let $Q\in E$ be a point where the non-tangential limit of $|\nabla D|$ exists and where 
there is a unique tangent to $E$ and tangent measure for $\mu$ (call it $\mu_\infty$). 
Such a $Q \in E$ can be found $\sigma$-a.e.  (by the theory of rectifiable sets and 
Theorem \ref{thm:rectimpliesntlimits} above). 
Let $\varphi$ be a smooth approximation of $\chi_{B(0,1)}$ and for $r_i \downarrow 0$ 
define $\varphi_i(x) \equiv \frac{\varphi(\frac{x-Q}{r_i})}{r_i^d}$. 
Adapting notation as in  \eqref{e:blowupsforrect} we get 
$$
\begin{aligned}
\int_{\Omega} D^{-(n-d-1)} \nabla D \cdot \nabla \varphi_i \, dx 
&= \frac{1}{r_i^{d+1}} \int_{\Omega} D^{-(n-d-1)}(x) \nabla D(x) \cdot
(\nabla \varphi)\big(\frac{x-Q}{r_i}\big) dx \cr
&=
\int_{\Omega_i} D_i^{-(n-d-1)}(y)\nabla D_i(y)\cdot  \nabla\varphi(y)dy,
\end{aligned}
$$
by a change of variables $y = \frac{x-Q}{r_i}$.

We now take 
the limit in $i$. Recall 
that $E$ has a unique tangent $d$-plane $T$ at $Q$, 
and that there is a non-tangential limit $L = \ntlim_{x \to Q} |\nabla D|$. 
In addition, by the discussion in Section~\ref{s:NTLimits} (see, in particular, \eqref{e:dinfinity} and \eqref{e:ntlimitatnicepoint})
$D_i$ tends to $D_\infty(x) = L\delta_T(x)$, and this convergence happens 
uniformly up to $T$. The convergence of $\nabla D_i$ to $\nabla D_\infty$
is only uniform on compact sets of $\R^n \sm T$, but close to $T$
the integrals are controlled uniformly because the gradients are bounded, so we get that

\begin{equation}\label{e:ntlimitispoissonkernel} 
\lim_{i \rightarrow \infty} \int_{\Omega} D^{-(n-d-1)} 
\nabla D \cdot \nabla \varphi_i \, dx 
= L^{-(n-d-2)}   
\int_{\R^n \backslash T} (\delta_T)^{-(n-d-1)} \nabla \delta_T \cdot \nabla \varphi dx.
\end{equation} 

Split the integral on the right hand side of \eqref{e:ntlimitispoissonkernel} into two pieces; one 
 on a neighborhood $T_\varepsilon$
of radius $\varepsilon > 0$ around $T$ and the other outside 
 of $T_\varepsilon$.
The integral on $T_\varepsilon$  
goes to zero as $\varepsilon > 0$ goes to zero, 
by the Lipschitz character of $\delta_T$ and $\varphi$ and the local integrability of 
$\delta^{-(n-d-1)}$. For the integral 
on $\R^n \sm T_\varepsilon$
we can integrate by parts. Notice that $\delta_T$ is a distance to the $d$-tangent plane, hence, 
it is a radial function in a space with $n-d$ dimensions, and hence, $\delta_T^{-n+d+2}$ is harmonic. Then
\begin{equation} \label{6.16n}
\begin{aligned}
\int_{\R^n \backslash T_\varepsilon} 
(\delta_T)^{-(n-d-1)} \nabla \delta_T \cdot \nabla \varphi dx
&= \int_{\{x|\dist(x, T) = \varepsilon\}} \varepsilon ^{-(n-d-1)} \varphi d\mathcal H^{n-1}
\\
&= c_1\int_T \int_{\epsilon \mathbb S^{n-d-1}} \varepsilon^{-(n-d-1)} \varphi d\mathcal H^{n-d-1} d\mathcal H^{d} 
\\
&= c_2\int_T \varphi  d \mathcal H^d + O(\varepsilon),
\end{aligned}
\end{equation}
where $c_1$ and $c_2$ are dimensional constants that we shall never need to compute.
We let $\varepsilon$ tend to $0$, return to \eqref{e:ntlimitispoissonkernel}, and get that
\begin{equation} \label{6.17n}
\lim_{i \rightarrow \infty} \int_{\Omega} D^{-(n-d-1)} 
\nabla D \cdot \nabla \varphi_i \, dx 
= c_2 L^{-(n-d-2)}  \int_T \varphi  d \mathcal H^d.
\end{equation}
Now we let $\varphi$ tend to $\chi_{B(0,1)}$ (as BV functions);  
the right-hand side tends to $c_2 L^{-(n-d-2)} V_d$, where $V_d$ is 
the volume of the $d$-dimensional unit ball. For the left-hand side, notice that by \eqref{addednumber},
$$
\int_{\Omega} D^{-(n-d-1)} \nabla D \cdot \nabla \varphi_i dx 
= \int_E \varphi_i d\omega
= \int_E \varphi_i \, \frac{d\omega}{d\sigma} \, d\sigma.
$$
When $i$ tends to $+\infty$ and $Q$ is a point of density for $\frac{d\omega}{d\sigma}$ 
(which is true $\sigma$-a.e.), the quantity 
$\int_E \varphi_i(Z) \big|\frac{d\omega}{d\sigma}(Z)- \frac{d\omega}{d\sigma}(Q)\big| d\sigma(Z)$ tends to $0$; we are left with
$\frac{d\omega}{d\sigma}(Q) \int_E \varphi_i  d\sigma$.
If $Q$ is also a point of density $1$ for $\sigma$ (which is again true $\sigma$-a.e.), 
$\int_E \varphi_i  d\sigma$ tends to $\int_T \varphi d\H^d$. 
Now we let $\varphi$ tend to $\chi_{B(0,1)}$ and get that the left-hand side of
\eqref{6.17n} tends to $c_{3} \frac{d\omega}{d\sigma}(Q)$, where $c_3$ may depend
on how we normalize $\H^d$ with respect to the Lebesgue measure. Thus \eqref{6.17n}
implies that $\frac{d\omega}{d\sigma}(Q) = c_4 L^{-(n-d-2)}$. This is the desired result. 
\end{proof}

In the specific case where $\mu = \sigma \equiv \mathcal H^d|_E$ and $E$ is $d$-rectifiable, \eqref{e:ntlimitatnicepoint} tells us that $\ntlim_{x\rightarrow Q}|\nabla D_{\sigma, \alpha}| = c_{n,d} \Theta^d(\sigma, Q)^{-1/\alpha} = c_{n,d}$ by the fact that $\Theta^d(\sigma, Q) = 1$ for $\sigma$-a.e. $Q$ in any $d$-rectifiable set. Thus we can conclude that $\omega_{\sigma, \alpha}$ is proportional to $\sigma$. 

\begin{cor}\label{c:poissonkernelconstant}
Let $E, d, n,\alpha$ be as in Lemma \ref{l:poissonkernelisNTlimit}. 
Then there exists a constant $c > 0$ such that 
$\omega_{\sigma, \alpha} = c \sigma$, where, as above, $\sigma = \mathcal H^d|_E$. 
\end{cor}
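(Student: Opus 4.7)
The plan is to combine Lemma \ref{l:poissonkernelisNTlimit} with the standard fact that the $d$-dimensional density of $\sigma = \mathcal H^d|_E$ equals $1$ at $\sigma$-a.e.\ point of a $d$-rectifiable set. This will make the Radon--Nikodym derivative $\frac{d\omega_{\sigma,\alpha}}{d\sigma}$ constant (as a function of $Q$) $\sigma$-a.e.\ on $E$, from which the equality $\omega_{\sigma,\alpha} = c\sigma$ is immediate.

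More concretely, first I would fix $\mu = \sigma$ and recall from \eqref{e:ntlimitatnicepoint} (combined with \eqref{e:dinfinity}) that at any point $Q \in E$ where the $d$-density $\Theta^d(\sigma,Q)$ exists and the approximate tangent plane to $E$ at $Q$ exists, one has
\begin{equation*}
\ntlim_{x\to Q} |\nabla D_{\sigma,\alpha}(x)| \;=\; c_{\beta,n,d}\,\Theta^d(\sigma,Q)^{-1/\alpha}.
\end{equation*}
Since $E$ is $d$-Ahlfors regular and $d$-rectifiable (with $d$ an integer), classical rectifiability theory (see e.g.\ \cite{Mattila}, 16.5) yields $\Theta^d(\sigma,Q) = 1$ for $\sigma$-almost every $Q\in E$, and the approximate tangent $d$-plane exists $\sigma$-a.e.\ as well. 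Consequently, the non-tangential limit $L(Q) := \ntlim_{x\to Q}|\nabla D_{\sigma,\alpha}(x)|$ is the constant $c_{\beta,n,d}$ at $\sigma$-a.e.\ $Q$.

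Next I would apply Lemma \ref{l:poissonkernelisNTlimit} with $\mu = \sigma$ and magic $\alpha = n-d-2$. That lemma gives, at $\sigma$-a.e.\ $Q$,
\begin{equation*}
\frac{d\omega_{\sigma,\alpha}}{d\sigma}(Q) \;=\; \widetilde c_{n,d}\,L(Q)^{-(n-d-2)} \;=\; \widetilde c_{n,d}\, c_{\beta,n,d}^{-(n-d-2)} \;=:\; c,
\end{equation*}
a strictly positive constant independent of $Q$. Since $\omega_{\sigma,\alpha}$ is absolutely continuous with respect to $\sigma$ in the sense of the above Radon--Nikodym identity (and its density is constant $\sigma$-a.e.), we conclude $\omega_{\sigma,\alpha} = c\,\sigma$, as desired.

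The proof is essentially a direct application of two already established results, so there is no real obstacle: the only thing to verify carefully is that the hypotheses of Lemma \ref{l:poissonkernelisNTlimit} genuinely apply to $\mu = \sigma$, which they do since $\sigma$ is itself a $d$-Ahlfors regular measure with support $E$. The positivity of $c$ follows from the positivity of $c_{\beta,n,d}$ in \eqref{e:Dforflat} and of $\widetilde c_{n,d}$ in Lemma \ref{l:poissonkernelisNTlimit}.
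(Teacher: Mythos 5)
Your argument is correct and follows essentially the same route as the paper: substitute $\mu = \sigma$ into Lemma \ref{l:poissonkernelisNTlimit}, use the standard fact that $\Theta^d(\sigma,Q)=1$ at $\sigma$-a.e.\ $Q$ of a $d$-rectifiable Ahlfors regular set (equivalently, that the non-tangential limit in \eqref{e:ntlimitatnicepoint} is the constant $c_{\alpha,n,d}$), and conclude that the density $\frac{d\omega_{\sigma,\alpha}}{d\sigma}$ is a positive constant. The only implicit point worth noting is that $\omega_{\sigma,\alpha}\ll\sigma$ with no singular part, which is supplied by Theorem \ref{t:magicomegaatinfinite}; once that is in hand the corollary follows immediately, just as in the paper.
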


\end{document}